\documentclass[11pt,oneside,a4papper]{osajnl}
\usepackage{amsthm}
\usepackage{comment}
\journal{jocn} 

\setboolean{shortarticle}{false}

\title{\centering 
Stackelberg-Nash strategy for the null controllability of semilinear degenerate equations in non-cylindrical domains}

\author[1,3,*]{\centering \textcolor{black}{Alfredo S. Gamboa}}
\author[1]{\textcolor{black}{Juan B. Limaco}}
\author[1,2]{\textcolor{black}{Luis P. Yapu}}

\affil[1]{Universidade Federal Fluminense, Instituto de Matemática e Estatística, Niterói, Brazil}
\affil[2]{Friedrich-Alexander Universität Erlangen-Nürnberg (FAU), Chair of Dynamics, Control, Machine Learning and Numerics, Erlangen, Germany}
\affil[3]{Universidade Estadual do Rio de Janeiro, Escola Politécnica, Nova Friburgo, Brazil}
\affil[*]{\centering Contact: alfredo.soliz@iprj.uerj.br}

 \dates{}


\doi{}


\begin{abstract}
In this paper we use a Stackelberg-Nash strategy to show the local null controllability of a semilinear parabolic equation in one-dimension defined in a non-cylindrical domain where the diffusion coefficient degenerates at one point of the boundary. The linearized degenerated system is treated using a Carleman inequality for degenerated non-autonomous systems proved by the autors in \cite{GYL-Carleman-2025} and the local controllability of the semi-linear system is obtained using Liusternik's inverse function theorem. 
\end{abstract}

\definecolor{mygreen}{RGB}{44,162,67}
\definecolor{mylilas}{RGB}{186,85,211}
\setboolean{displaycopyright}{false}
\newcommand{\dpar}[2]{\frac{\partial #1}{\partial #2}}

\newcommand{\rea}{\mathbb{R}}

\newcommand{\dpd}[2]{\frac{\partial^2 #1}{\partial #2^2}}

\newcommand{\cara}{\mathbb{1}}

\newtheorem{lema}{Lemma}
\newtheorem{teo}{Theorem}
\newtheorem{propo}{Proposition}
\newtheorem{coro}{Corollary}
\newtheorem{remark}{Remark}

\newcommand{\R}{\mathbb{R}}

\setboolean{displaycopyright}{false}

\begin{document}

\maketitle

\textbf{MSC Classification (2020)}: Primary: 35K65, 93B05; Secondary: 93C10. 

\textbf{keywords}: Degenerate parabolic equations, Non-cylindrical domains, Controllability, Nonlinear systems in Control Theory, Carleman inequalities.

\section{Introduction} \label{S:Intro}
 
Let $T>0$ be a fixed constant, let $\ell(t)$ be a smooth curve depending on time $t \in [0,T]$ which determines the non-cylindrical domain $\hat Q = \{ (x',t) \ | \ 0 < x' < l(t), \ t \in [0,T] \}$ with lateral boundary $\hat \Sigma = \{ (x',t) | x'=0 \text{ or } x'=l(t), \ t \in [0,T] \}$.

For each $t \in [0,T]$, we denote by $\tau_t$ a diffeomorphism from $(0,1)$ to $\hat \Omega_t = (0,\ell(t))$ and we denote by $Q=(0,1) \times (0,T)$ the associated cylindrical domain determined by the diffeomorphism $\tau : Q \to \hat Q$ which is defined at each $t$-slice by $\tau_t$. For more details see Section \ref{sec:diffeomorphism}.

We are interested in the controllability following degenerate semilinear parabolic equation in non-cylindrical domain, 

\begin{equation}\label{eq:PDE}
	\left\{\begin{aligned}
		&u_t - \left(a(x') u_{x'}\right)_{x'} + F(u,\beta(x')u_{x'}) = \hat h 1_{\hat O} + \hat v^1 1_{\hat O_1} + \hat v^2 1_{\hat O_2} &&\text{in}&& \hat Q,\\
		&u(0,t)=u(1,t)=0&&\text{on} && (0,T), \\
		&u(\cdot,0) = u_0 &&\text{in} && \hat \Omega_0,
	\end{aligned}
	\right.
\end{equation}
where $u_{0}$ is the initial data, $\hat h$ is the control of the leader, $\hat v^i$ are controls of the followers and $1_A$ denotes the characteristic function of the set $A$. $F$ is a $C^2$-function with bounded derivatives up to order 2.

Let $\hat O_t$ and $\hat O_{i,t}$ denote open intervals included in $\hat \Omega_t$ and $\hat O = \cup_t \hat O_t$, $\hat O_{i} = \cup_t \hat O_{i,t}$ such that $\hat O_i \cap \hat O = \emptyset$.

The function $a:=a(\cdot)$ represents the degenerate diffusion coefficient and satisfies the following:
$a \in C([0,1]) \cap C^1((0,1])$ with $a(0) = 0, a > 0 \text{ in } (0,1], a' \ge 0$ and  $xa'(x)\leq K a(x), \forall x \in [0,1] \text{ and some } K \in [0,1)$. Moreover, we suppose that $a$ verifies  
$$a(x \cdot y)=a(x)a(y).$$

Therefore, the function $a$ behaves like  $x^\alpha$ with $\alpha \in (0,1)$. This condition is called  \emph{weakly degenerate} in \cite{Alabau_cannarsa_fragnelli-06}.
Under these hypotheses, the function $\frac{x}{\sqrt{a}}$ is nondecreasing and thus is bounded from above by $\frac{1}{\sqrt{a(1)}}$.

On the other hand, the function $\beta(x)$ verifies the following, 
\begin{equation}\label{eq:cond_beta}
    \beta(x)^2 \leq a(x),\quad (\beta(x)^2)_x \leq 2 a(x)_x\quad and \quad \beta(x)_x \leq M.
\end{equation}

For instance one can consider $\beta$ of the form $\beta(x)=x^b$, with $b>1$, for $x$ near the degeneration point $x=0$. 

Moreover, let us consider the sets $\hat O_{i,d} = \cup_t \hat O_{i,d,t}$ which represent the observation domains of the followers, where $\hat O_{i,d,t}$, $i=1,2$, are open subsets of $\hat \Omega_t$. We consider the functionals
\begin{equation}\label{eq:def_Ji}
\hat J_i(\hat h,\hat v^1,\hat v^2) = \frac{\alpha_i}{2} \int_{\hat O_{i,d}} |u-u_{i,d}|^2 |Jac (\tau)|^{-1} dxdt + \frac{\mu_i}{2} \int_{\hat O_i} |\hat v^i|^2 |Jac (\tau)|^{-1} dxdt,
\end{equation}
where $\alpha_i>0$, $\mu_i>0$ and $u_{i,d}(x,t)$ are given functions, for $i=1,2$. 
We observe that one can also define $\hat J_i$ without the weight $|Jac (\tau)|^{-1}$, see Remark \ref{rmk:without_Jac} in Section \ref{sec:characterization_convexity}.

For a fixed control $\hat h\in L^{2}(\hat O \times (0,T))$, the controls $\hat v^i \in L^{2}(\hat O_{i} \times (0,T))$, $i=1,2$, are a \emph{Nash equilibrium} for the functionals $\hat J_1$ and $\hat J_2$ if
\begin{equation}\label{minimousfunctional}
\left\{\begin{aligned}
&\hat J_1(\hat h,\hat v^1,\hat v^2) = \min_{\tilde v^1\in L^{2}(\hat O_{1} \times (0,T))} \hat J_1(\hat h,\tilde v^1, \hat v^2), \\
&\hat J_2(\hat h,\hat v^1,\hat v^2) = \min_{\tilde v^2\in L^{2}(\hat O_{2} \times (0,T))} \hat J_1(\hat h,\hat v^1,\tilde v^2).
\end{aligned}
\right.
\end{equation}

For a linear equation, the functionals in \eqref{eq:def_Ji} are convex and, in that case,  \eqref{minimousfunctional} is equivalent to
\begin{equation}\label{derivateoffunctional}
\left\{\begin{aligned}
&\hat J_1'(\hat h,\hat v^1,\hat v^2)(\tilde v^1,0) = 0, \quad \forall \hat v^1 \in L^{2}(\hat O_{1} \times (0,T)), \\
&\hat J_2'(\hat h,\hat v^1,\hat v^2)(0,\tilde v^2) = 0, \quad \forall \hat v^2 \in L^{2}(\hat O_{2} \times (0,T)).
\end{aligned}
\right.
\end{equation}

For our nonlinear equation we lose the convexity of the functional and the Nash equilibrium condition \eqref{minimousfunctional} is not necessarily equivalent to \eqref{derivateoffunctional}.
For a fixed control $\hat h \in L^{2}(\hat O \times (0,T))$, one says that the controls $\hat v^{i}\in L^{2}(\hat O_{i} \times (0,T))$, for $i=1,2$, are a \emph{Nash quasi-equilibrium} for the functionals $\hat J_i$ if they satisfy the condition \eqref{derivateoffunctional}.

Once the Nash equilibrium has been identified for any $\hat h$, given $T>0$ we look for a control $\hat h \in L^{2}(\hat O \times (0,T))$ such that the system is \emph{null controllable} at time $T$, i.e. 
$$
u(x,T) = 0, \text{ for } x \in \hat \Omega_T.
$$

The hierarchical control approach known as the Stackelberg-Nash strategy has been widely studied in the context of partial differential equations, particularly in control problems for evolution equations. Its origin lies in game theory, where scenarios are modeled in which a leader makes strategic decisions before one or more followers, who then respond accordingly. In the context of the control of evolution equations, this strategy was first applied by J.-L. Lions in \cite{Lions-94-PMS} and \cite{Lions-94-MM}, laying the groundwork for subsequent studies that have extended its applicability to other systems \cite{HernandezSantamaria-Peralta-20, HernandezSantamaria-deTeresa-18, fragnelli-20, fragnelli-18, Djomegne-Deugou-21}.

One of the first advances in the framework of exact controllability using this strategy was presented in \cite{Araruna-EFC-Santos-15}, where the hierarchical control of a class of parabolic equations was studied, both in the linear and semilinear cases. Subsequently, in \cite{Araruna-EFC-Guerr-Santos-17}, these results were refined by imposing weaker conditions on the observation domains of the followers, allowing for a broader range of applicability of the method. In the context of wave equations, a similar approach was developed in \cite{Araruna-EFC-daSilva-18}, highlighting the versatility of this strategy beyond the parabolic regime.

In \cite{Araruna-EFC-daSilva-20}, the problem of exact control in parabolic equations with distributed and boundary controls was addressed, providing a deeper understanding of the interaction between different levels of control in systems with spatial and physical constraints. Additionally, for coupled systems, Kéré et al. \cite{frances25} introduced a bi-objective control strategy for a system of coupled parabolic equations with finite constraints on one of the states, establishing a methodology that balances multiple control objectives within a hierarchical framework. Similarly, in \cite{frances22} and \cite{frances23}, the Stackelberg-Nash strategy was studied for cascade systems of parabolic equations, providing analytical tools to understand the interaction between leaders and followers in more complex control structures.

The application of this strategy to nonlinear systems has been the subject of recent research. In \cite{frances33}, Limaco et al. applied hierarchical control to a coupled quasi-linear parabolic system with controls acting inside the domain, highlighting the additional challenges that arise in the presence of nonlinear terms. In a different context, in \cite{frances34}, the strategy was extended to nonlinear parabolic equations in unbounded domains, addressing key aspects of controllability in scenarios without finite spatial constraints. Furthermore, in \cite{frances24}, Huaman applied this strategy to the control of quasi-linear parabolic equations in dimensions 1, 2, and 3, consolidating the applicability of the method in multidimensional contexts.

More recently, in \cite{frances9}, the implementation of hierarchical control in the anisotropic heat equation with dynamic boundary conditions and drift terms has been explored, further extending the scope of the Stackelberg-Nash strategy in problems with complex boundary conditions. However, all these studies have exclusively considered non-degenerate systems. To the best of our knowledge, the only works that address the hierarchical strategy in degenerate parabolic equations are those presented in \cite{ararunano} and \cite{francescuate}, where linear and semilinear cases are studied, with the latter relying on a Carleman inequality by Alabau-Boussouira, Cannarsa, and Fragnelli \cite{Alabau_cannarsa_fragnelli-06}, introducing first-order terms in the nonlinearity. However, these works deal with problems in fixed domains that do not depend on time. Therefore, the present work is the first to consider moving domains, which generates a control problem for a non-autonomous degenerate parabolic equation ans uses the Carleman estimate demonstrated by the authors in \cite{GYL-Carleman-2025}.

Our first main result shows the existence of Nash quasi-equilibrium $(\hat v_1,\hat v_2)$ for $\hat J_1$ and $\hat J_2$, such that we obtain null controllability of our equation \eqref{eq:PDE} for small initial conditions (i.e. \emph{local controllability}). Since the equation is degenerate, the initial condition $u_0$ is taken in the weighted space $H_a^1(\hat \Omega)$,  defined in \cite{Alabau_cannarsa_fragnelli-06} for cylindrical domains; see Section \ref{Sec:null_linearized_system}.

\begin{teo}
\label{thm:local_null_controllability}
Let us assume the hypothesis considered in the setting of  \eqref{eq:PDE} and that, for $i=1,2$,
$$
\hat O_{i,d} \cap \hat O \neq \emptyset
$$
and
\begin{equation}\label{ec9}
\hat O_d := \hat O_{1,d} = \hat O_{2,d}.    
\end{equation}

Then, for any $T>0$ there exist $\varepsilon>0$ and a positive function $\rho(t)$ blowing up at $t=T$ such that, if $\rho y_{i,d} \in L^2(O_d \times (0,T))$ and $u_0 \in H_a^1(\Omega_0)$ verifies
$$
\|u_0\|_{H_a^1(\hat \Omega_0)} \leq \varepsilon, 
$$
then there exist a control $\hat h \in L^2(\hat O\times(0,T))$ and associated Nash quasi-equilibrium $(\hat v^1,\hat v^2)$ such that the solution of \eqref{eq:PDE} is null-controllable at time $T$.
\end{teo}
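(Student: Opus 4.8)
The plan is to follow the now-classical strategy for Stackelberg--Nash null controllability, adapted to the non-cylindrical and degenerate setting via the change of variables $\tau$. First I would transform the problem \eqref{eq:PDE} on $\hat Q$ into an equivalent problem on the cylinder $Q=(0,1)\times(0,T)$ through the diffeomorphism $\tau$; since $a$ is multiplicative ($a(xy)=a(x)a(y)$) the degenerate structure is preserved and we obtain a non-autonomous degenerate parabolic equation on $Q$ with a new (time-dependent) principal part, lower-order first-order terms coming from the Jacobian, and a transformed nonlinearity $\tilde F$ still of class $C^2$ with bounded derivatives. The weight $|Jac(\tau)|^{-1}$ in \eqref{eq:def_Ji} is precisely what makes the functionals $\hat J_i$ become the standard $L^2$-functionals $J_i$ on $Q$, so the Nash quasi-equilibrium condition \eqref{derivateoffunctional} transforms cleanly.

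Next I would address the \emph{Nash quasi-equilibrium} for the nonlinear problem. Writing the optimality system \eqref{derivateoffunctional}, each follower's condition produces an adjoint state $\varphi^i$, and one gets a coupled system: the state equation for $u$, the two adjoint equations for $\varphi^1,\varphi^2$, with couplings $\hat v^i = -\frac{1}{\mu_i}\varphi^i \mathbb{1}_{O_i}$ and source terms $\alpha_i(u-u_{i,d})\mathbb{1}_{O_d}$. Because convexity is lost in the nonlinear case, I would first establish existence and uniqueness of the Nash quasi-equilibrium for the \emph{linearized} system (linearizing $\tilde F$ around $0$), proving that the second derivatives $\hat J_i''$ are coercive for $T$ small or under the structural assumption \eqref{ec9} $\hat O_d:=\hat O_{1,d}=\hat O_{2,d}$ together with $\alpha_i$ suitably chosen — this is where assumption \eqref{ec9} is used, exactly as in \cite{Araruna-EFC-Santos-15}. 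Then the linear optimality system is a well-posed coupled parabolic system.

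The heart of the proof is the null controllability of this linear optimality system, which I would obtain by duality: the control $\hat h$ exists iff a suitable observability inequality holds for the adjoint of the optimality system. Here the adjoint is again a coupled (forward-backward) degenerate non-autonomous system, and the observability inequality would be derived from the Carleman estimate for degenerate non-autonomous equations proved by the authors in \cite{GYL-Carleman-2025}. The key technical points are: (i) choosing weight functions compatible with the degeneracy at $x=0$ and with the moving observation domains $O_i$, $O_d$, so that the Carleman weights absorb the coupling terms $\frac{1}{\mu_i}\varphi^i\mathbb{1}_{O_i}$ — this requires the observation region of the leader $\hat O$ to intersect $\hat O_d=\hat O_{i,d}$, which is the hypothesis $\hat O_{i,d}\cap\hat O\neq\emptyset$; (ii) controlling the extra first-order term $\beta(x)u_{x'}$ in the nonlinearity using $\beta^2\le a$ from \eqref{eq:cond_beta}, so it is dominated by the degenerate second-order part; (iii) handling the non-autonomous lower-order terms from the change of variables, which is covered by \cite{GYL-Carleman-2025}. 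This produces $\hat h\in L^2(\hat O\times(0,T))$ driving the linear optimality system to rest, together with weighted estimates: there is a weight $\rho(t)$ blowing up at $t=T$ such that $\rho u$, $\rho\hat h$, $\rho\varphi^i$ all lie in $L^2$, provided $\rho y_{i,d}\in L^2(O_d\times(0,T))$.

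Finally I would pass from the linear to the semilinear system by an inverse function theorem argument (Liusternik). Define a map $\Lambda$ sending $(u,\hat h,\hat v^1,\hat v^2,\varphi^1,\varphi^2)$ in appropriate weighted spaces to the residuals of the full nonlinear optimality system plus the initial condition; $\Lambda$ is $C^1$ because $\tilde F\in C^2$ with bounded derivatives, $\Lambda(0)=0$, and $\Lambda'(0)$ is surjective precisely because the linear optimality system is null-controllable with the weighted estimates just established. Liusternik's theorem then yields, for $\|u_0\|_{H^1_a(\hat\Omega_0)}\le\varepsilon$ small enough, a solution of the nonlinear optimality system with $u(\cdot,T)=0$; undoing the change of variables gives Theorem~\ref{thm:local_null_controllability}. \textbf{Main obstacle:} the Carleman-based observability inequality for the \emph{coupled} degenerate non-autonomous optimality system — one must choose the weights so that the follower-coupling terms are absorbed despite the degeneracy at $x=0$ and the time-dependence of the domain, and so that the global weighted estimate on the followers' adjoint states $\varphi^i$ is strong enough for $\Lambda'(0)$ to be surjective on the weighted spaces. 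Everything else (change of variables, well-posedness, the inverse-function-theorem step) is technical but routine.
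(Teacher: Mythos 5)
Your proposal follows essentially the same route as the paper: transform to the cylinder via $\tau$, characterize the Nash quasi-equilibrium by the coupled optimality system, prove null controllability of the linearized optimality system with weighted estimates using the Carleman inequality of \cite{GYL-Carleman-2025}, and conclude by Liusternik's theorem applied to the nonlinear optimality map. Two small points of divergence in attribution rather than substance: the paper constructs the linear control by a Lax--Milgram/variational argument on a weighted space (equivalent in content to your duality step, since its coercivity is exactly the Carleman/observability inequality), and the hypothesis \eqref{ec9} is used not for coercivity of $\hat J_i''$ but to collapse the two followers' adjoint states into the single variable $\varrho=\alpha_1\psi^1+\alpha_2\psi^2$ so that the Carleman estimate for the coupled adjoint system requires observing only $\phi$ on $\hat O$ --- the coercivity of $\hat J_i''$ (for $\mu_i$ large) is needed only for Proposition \ref{thm:nash_equilibrium}, not for Theorem \ref{thm:local_null_controllability}, which asks only for a quasi-equilibrium.
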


Our next result shows conditions under which the Nash quasi-equilibrium $(\hat v_1,\hat v_2)$ obtained in Theorem \ref{thm:local_null_controllability} is a Nash equilibrium.

\begin{propo} 
\label{thm:nash_equilibrium}
Let $u_0 \in H_a^1(\hat \Omega_0)$.
Under the hypotheses of Theorem \ref{thm:local_null_controllability}, 
if $\mu_1,\mu_2$ are sufficiently large, then the Nash quasi-equilibrium  $(\hat v^1,\hat v^2)$ obtained in Theorem \ref{thm:local_null_controllability} is a Nash equilibrium associated to the  equation \eqref{eq:PDE} for the functionals defined in \eqref{eq:def_Ji}.
\end{propo}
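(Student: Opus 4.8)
The plan is to promote the Nash quasi-equilibrium $(\hat v^1,\hat v^2)$ to a genuine Nash equilibrium by proving that, for each $i\in\{1,2\}$, the partial functional $\tilde v^i\mapsto \hat J_i(\hat h,\tilde v^1,\tilde v^2)$—with the leader control $\hat h$ and the other follower control $\tilde v^j$ ($j\ne i$) frozen at the values produced by Theorem~\ref{thm:local_null_controllability}—attains its \emph{global} minimum at $\hat v^i$. Since \eqref{derivateoffunctional} says precisely that $\hat v^i$ is a critical point of this partial functional, it suffices to combine a sign estimate for its second variation with the coercivity $\hat J_i\ge c\,\mu_i\,\|\tilde v^i\|_{L^2(\hat O_i\times(0,T))}^2$ for some $c>0$ (the weight $|Jac(\tau)|^{-1}$ is bounded above and below by positive constants because $\ell$ is smooth and bounded below, so it plays no role). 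So first I would compute the second derivative. Fix $w\in L^2(\hat O_i\times(0,T))$, let $u^\sigma$ solve \eqref{eq:PDE} with $\tilde v^i$ replaced by $\tilde v^i+\sigma w$, and write $z$, $\zeta$ for the first and second $\sigma$-derivatives of $u^\sigma$; since $F\in C^2$ with bounded derivatives of order $\le 2$, the map $\sigma\mapsto u^\sigma$ is $C^2$ into the energy space, $z$ solves the linearized equation with right-hand side $w\,1_{\hat O_i}$ and $z(\cdot,0)=0$ (the datum $u_0$ is independent of $\tilde v^i$), and $\zeta$ solves the same linear equation with $\zeta(\cdot,0)=0$ and source $-\big(\partial_1^2F\,z^2+2\,\partial_1\partial_2F\,z\,\beta z_{x'}+\partial_2^2F\,(\beta z_{x'})^2\big)$, the derivatives of $F$ being evaluated along $(u^\sigma,\beta u^\sigma_{x'})$. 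Differentiating \eqref{eq:def_Ji} twice in $\sigma$ gives
\[
\frac{d^2}{d\sigma^2}\hat J_i=\alpha_i\!\int_{\hat O_d}\!\big(z^2+(u^\sigma-u_{i,d})\,\zeta\big)|Jac(\tau)|^{-1}\,dx\,dt+\mu_i\!\int_{\hat O_i}\!w^2\,|Jac(\tau)|^{-1}\,dx\,dt ,
\]
whose first and third integrands are nonnegative, so the only possibly negative contribution is $\alpha_i\int_{\hat O_d}(u^\sigma-u_{i,d})\,\zeta$.

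The next step is to bound this term by $C\,\alpha_i\,\|u^\sigma-u_{i,d}\|_{L^2(\hat O_d)}\,\|w\|^2_{L^2(\hat O_i)}$ with $C$ independent of $\sigma$ and of $\mu_i$. After transporting the equations to the cylinder $Q$ through $\tau$ (here the multiplicativity $a(xy)=a(x)a(y)$ keeps the degenerate structure), $z$ solves a non-autonomous, weakly degenerate linear parabolic equation whose zeroth-order coefficient $\partial_1F$ is bounded and whose first-order coefficient $\partial_2F\,\beta$ is subordinate to the degenerate diffusion because $\beta^2\le a$ in \eqref{eq:cond_beta}; the regularity theory for such equations used in \cite{GYL-Carleman-2025} (in the spirit of \cite{Alabau_cannarsa_fragnelli-06}) yields, from $z(\cdot,0)=0$ and right-hand side in $L^2$, that $z$ is bounded in $L^\infty(0,T;H_a^1)\hookrightarrow L^\infty(\hat Q)$ with $\|z\|\le C\|w\|_{L^2(\hat O_i)}$, the constant depending only on $T$, $a$, $\beta$, $\hat Q$ and $\|F\|_{C^2}$. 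Using the bounds in \eqref{eq:cond_beta} the source of the $\zeta$-equation then lies in an admissible space—for instance $L^2(\hat Q)$ for the first two terms and $L^2(0,T;L^1_{x'})$ for $\partial_2^2F\,(\beta z_{x'})^2$—with norm $\le C\|w\|^2_{L^2(\hat O_i)}$, and the corresponding smoothing estimate for the degenerate equation (obtained by duality with the adjoint, whose solutions lie in $L^2(0,T;H_a^1)\hookrightarrow L^2(0,T;L^\infty_{x'})$) gives $\|\zeta\|_{L^2(\hat O_d)}\le C\|w\|^2_{L^2(\hat O_i)}$; Cauchy–Schwarz then yields the claimed bound on the bad term.

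Finally I would localize and conclude. If no $\bar v^i$ satisfies $\hat J_i(\hat h,\dots,\bar v^i,\dots)<\hat J_i(\hat h,\dots,\hat v^i,\dots)$ there is nothing to prove; otherwise, from $c\,\mu_i\|\bar v^i\|^2\le \hat J_i(\dots,\bar v^i,\dots)\le \hat J_i(\dots,\hat v^i,\dots)\le C_1+C_2\,\mu_i\,\varepsilon^2$—using that $\|\hat v^i\|$ and $\|u(\hat v^i)-u_{i,d}\|_{L^2(\hat O_d)}$ are controlled (of order $\varepsilon$ and bounded, respectively) by the construction in Theorem~\ref{thm:local_null_controllability}—both $\bar v^i$ and $\hat v^i$, hence the whole segment joining them, lie in a ball of radius $r_{\mu_i}$ with $r_{\mu_i}^2=c^{-1}(C_1/\mu_i+C_2\varepsilon^2)$, which is small for $\mu_i$ large and $\varepsilon$ small. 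On that ball, well-posedness of \eqref{eq:PDE} and the smallness supplied by Theorem~\ref{thm:local_null_controllability} give $\|u^\sigma-u_{i,d}\|_{L^2(\hat O_d)}\le D$ with $D$ depending only on the data, not on $\mu_i$; choosing $\mu_i>c^{-1}C\alpha_i D$ and invoking the previous step, $\frac{d^2}{d\sigma^2}\hat J_i\ge(c\,\mu_i-C\alpha_i D)\,\|w\|^2_{L^2(\hat O_i)}\ge 0$ along that segment. A second-order Taylor expansion of $\sigma\mapsto\hat J_i(\hat h,\dots,\hat v^i+\sigma(\bar v^i-\hat v^i),\dots)$—whose first derivative at $\sigma=0$ vanishes by \eqref{derivateoffunctional}—then forces $\hat J_i(\dots,\bar v^i,\dots)\ge\hat J_i(\dots,\hat v^i,\dots)$, a contradiction. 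Doing this for $i=1,2$ shows that $(\hat v^1,\hat v^2)$ satisfies \eqref{minimousfunctional}, i.e.\ it is a Nash equilibrium for the functionals \eqref{eq:def_Ji}.

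The step I expect to be the main obstacle is the $\mu_i$-uniform estimate $\|\zeta\|_{L^2(\hat O_d)}\le C\|w\|^2$ for the second-order linearized state in the degenerate, non-cylindrical, first-order-nonlinearity regime: it rests on parabolic regularity for the non-autonomous degenerate linearized equation with the drift term $\partial_2F\,\beta z_{x'}$—which is where the conditions \eqref{eq:cond_beta} and the weak degeneracy $a\sim x^\alpha$ are genuinely used, both to absorb that drift and to place the quadratic source $(\beta z_{x'})^2$ into an admissible space—and on a careful verification that every constant depends only on $\|F\|_{C^2}$, $T$, $a$, $\beta$ and $\hat Q$, and never on the large parameter $\mu_i$.
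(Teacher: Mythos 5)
Your proposal is correct and rests on the same core idea as the paper --- bound the second variation of $\hat J_i$ in $\tilde v^i$ from below by $(\mu_i-C)\|w\|^2$ and conclude convexity, hence that the critical point \eqref{derivateoffunctional} is a global minimum --- but the machinery is organized dually. The paper starts from the adjoint representation $D_1J_1(\cdot)\tilde v^1=\int\phi\,\tilde v^1+\mu_1\int v^1\tilde v^1$, differentiates once more to obtain the coupled forward--backward system \eqref{eq:accoplatesystem} for $(\eta,\theta)$, and bounds the bad term $\int\mathcal{N}(\eta,\theta)\,\theta$ after integration by parts using $H^1_a\hookrightarrow L^\infty$ and the energy estimates for $\phi$ and $\theta$. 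You instead expand the state to second order, so the bad term is $\alpha_i\int(u^\sigma-u_{i,d})\zeta$ with $\zeta$ the second-order linearization driven by the quadratic source $-(\partial_1^2F\,z^2+2\partial_1\partial_2F\,z\beta z_{x'}+\partial_2^2F(\beta z_{x'})^2)$; your duality estimate for $\zeta$ (pairing the $L^1_{x'}$ source against an adjoint in $L^2(0,T;H^1_a)\hookrightarrow L^2(0,T;L^\infty_{x'})$) is, term by term, the same computation the paper performs on $\int\mathcal{N}(\eta,\theta)\theta$, so the two formulations carry identical analytic content and both hinge on $\beta^2\le a$ and $H^1_a\hookrightarrow L^\infty$. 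Where you genuinely add something is the final localization: the constant in the second-derivative bound depends on the point of evaluation through the state (equivalently, through $\phi$ or through $u^\sigma-u_{i,d}$), a dependence the paper's statement of uniform convexity quietly absorbs into ``$C$ depending on the data''; your argument confining any better competitor $\bar v^i$, and the whole segment to $\hat v^i$, inside a ball on which that constant is uniform, followed by the Taylor expansion, closes this gap cleanly. The price is that you must justify well-posedness and the $\mu_i$-independent a priori bound for $u^\sigma$ on that ball, which your last paragraph correctly identifies as the delicate point and which follows from the energy estimates of the paper's Appendix A.
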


\noindent{\bf Outline of the paper}.
In Section \ref{sec:diffeomorphism} we write our problem as an equation with time variable coefficients on a cylindrical domain using a diffeomorphism. 
In Section \ref{sec:characterization_convexity} we characterize Nash quasi-equilibrium and we prove Proposition \ref{thm:nash_equilibrium} implying the convexity of the functionals and the fact that the Nash quasi-equilibrium are Nash equilibrium. In Section \ref{Sec:null_linearized_system} we show the null controllability of the linearized system using a Carleman estimate for a system of non-autonomous degenerated parabolic equations proved in \cite{DemarqueLimacoViana_deg_sys2020}. The main result of this section is Theorem \ref{theorem case linear} which besides the null controllability establishes additional estimates needed for the controllability of the nonlinear system. In Section \ref{sec:control for nonlinear system} we apply Liusternik inverse function theorem to show the local null controllability of \eqref{eq:PDE}. The proof of the main result, Theorem \ref{thm:local_null_controllability}, is a consequence of Lemmas \ref{A bem definido}-\ref{Mapa sobrejetivo} where we verify the hypothesis of Liusternik's theorem and apply it to a suitable map whose local surjectivity implies the local controllability result.
In Section \ref{sec:final_remarks} we present some related problems. 
Finally, in Appendix \ref{appendix A} we sketch the well-possedness of the optimality system obtained in Section \ref{sec:characterization_convexity}.

\section{Change of variables}
\label{sec:diffeomorphism}

Let $\Omega=(0,1)$ be a nonempty bounded connected open set, with boundary $\Gamma=\partial\Omega$, $T > 0$. We denote by $Q$ the cylinder $Q = \Omega\times (0, T)$, with lateral boundary $\Sigma=\Gamma\times (0, T)$. Let us consider a family of functions $\{\tau_t\}_{0\leq t\leq T}$ , where for each $t$, $\tau_t$ is a diffeomorphism deforming $\Omega$ into an open bounded set $\Omega_t$ of $\mathbb{R}$ defined by
$$\Omega_t=\{ x'\in \rea \ | \ x'=\tau_t(x), \ \ \text{for} \ \ x\in\Omega \}.$$

For $t = 0$, we identify $\Omega_0$ with $\Omega$ so that $\tau_0$ is the identity mapping. The smooth boundary of  $\Omega_t$ is denoted by $\Gamma_t$. The non-cylindrical domain $\widehat{Q}$ and its lateral boundary $\widehat{\Sigma}$ are defined by 
 $$\widehat{Q}=\bigcup_{0\leq t\leq T}\{\Omega_t\times\{t \}\}, \ \ \ \ \ \ \ \ \ \ \ \ \ \widehat{\Sigma}=\bigcup_{0\leq t\leq T}\{ \Gamma_t\times\{t\}  \}$$
respectively.

Thus we have a natural diffeomorphism $\tau : Q\rightarrow \widehat{Q}$ defined by
$$(x,t)\in Q \rightarrow (x',t)\in \widehat{Q}, \ \ \ \ \text{where} \ \ \ x'=\tau_t(x).$$

We assume the following regularity for the maps $\tau$ and  $\tau_t$, $0\leq t\leq T$:
\begin{description}
	\item[(R1)] $\tau_t$ is a  $C^2$ diffeomorphism from $\Omega$ to $\Omega_t$,
	\item[(R2)] $\tau$ belong to  $C^1([0,T];C^0(\overline{\Omega}))$.
\end{description}

\noindent We are interested in the following 
degenerate parabolic equation defined in the non-cylindrical domain $\hat Q$,
\begin{equation}\label{eq1a}
	\begin{cases}
		u_t-({a}(x')u_{x'})_{x'}+F\left(u,\beta(x')u_{x'}\right)=\widehat{h}\cara_{_{\widehat{\mathcal{O}}}}+\widehat{v}^1\cara_{_{\widehat{\mathcal{O}}_1}}+\widehat{v}^2\cara_{_{\widehat{\mathcal{O}}_2}}, & \ \ \ \text{in} \ \ \ \widehat{Q},\\
		u=0, & \ \ \ \text{on} \ \ \ \widehat{\Sigma},\\
		u(0)=u_0(x'), & \ \ \ \text{in} \ \ \ \Omega_t,
	\end{cases}
\end{equation}
where $u_0$ is the initial data, $\widehat{h}$ is the control of the leader, $\widehat{v}^i$ are controls of the followers and $\cara_{_{A}}$
denotes the characteristic function of the set $A$.

Let $\widehat{\mathcal{O}}\subset \widehat{Q}$ denote the domain of the main control  $\widehat{f}(x',t)$ (leader).

Let $\widehat{\mathcal{O}}_1, \widehat{\mathcal{O}}_2 \subset \widehat{Q}$ denote the domains of the secondary controls  $\widehat{v}^1(x',t)$ and $\widehat{v}^2(x',t)$ (followers).

Let $\widehat{\mathcal{O}}_{1,d}, \ \widehat{\mathcal{O}}_{2,d} \subset  \widehat{Q}$ denote open sets indicating the observation domains of the followers.

Let us consider the functionals associated to each follower:
\begin{eqnarray}\label{ec2a}    \widehat{J}_1(\widehat{h},\widehat{v}^1,\widehat{v}^2)&=&\frac{\alpha_1}{2}\int_{\widehat{\mathcal{O}}_{1,d}\times(0,T)}|u-u_{1,d}|^2\ \left| Jac(\tau_t)\right|^{-1}dx' dt+\frac{\mu_1}{2}\int_{\widehat{\mathcal{O}}_{1}\times(0,T)}|\widehat{v}^1|^2\ \left| Jac(\tau_t)\right|^{-1} dx' dt\nonumber,\\	\widehat{J}_2(\widehat{h},\widehat{v}^1,\widehat{v}^2)&=&\frac{\alpha_2}{2}\int_{\widehat{\mathcal{O}}_{2,d}\times(0,T)}|u-u_{2,d}|^2\  \left| Jac(\tau_t)\right|^{-1} dx' dt+\frac{\mu_2}{2}\int_{\widehat{\mathcal{O}}_{2}\times(0,T)}|\widehat{v}^2|^2\  \left| Jac(\tau_t)\right|^{-1} dx' dt\nonumber,
\end{eqnarray}
where $\alpha_i>0$, $\mu_i>0$ are constants and $v_{i,d}=v_{i,d}(x',t)$ are given functions, with $i=1,2$.

Using the diffeomorphism  $\tau : Q\rightarrow \widehat{Q}$
the domains are transformed in the following way:
$$\mathcal{O} \rightarrow \widehat{\mathcal{O}}, \qquad \mathcal{O}_1 \rightarrow \widehat{\mathcal{O}}_1, \qquad \mathcal{O}_2 \rightarrow \widehat{\mathcal{O}}_2,\qquad \mathcal{O}_{id} \rightarrow \widehat{\mathcal{O}}_{id}, \ \ i=1,2, $$
and the functions have the form
$$\widehat{h}(x',t)=h(\tau_t(x),t), \ \ \ \widehat{v}^i(x',t)=v^i(\tau_t(x),t)$$
$$\mathbb{1}_{\widehat{\mathcal{O}}}(x',t)=\mathbb{1}_{{\mathcal{O}}}(\tau_t(x),t), \ \ \ \ \mathbb{1}_{\widehat{\mathcal{O}}_i}(x',t)=\mathbb{1}_{{\mathcal{O}}_i}(\tau_t(x),t).$$


The function $u$ is transformed into a function $y$ in the cylindrical domain $Q$ such that $$u(x',t)=y(x,t)=y\left(\tau_t^{-1}(x'),t\right).$$

Moreover, we use the notation $\tau_t^{-1}=\psi_t$ or $\psi(x,t)=\psi_t(x)$ and we have $u(x',t)=y(x,t)=y\left(\psi_t(x'),t\right)$.

The following formulas allow us to transform the equation to the in the cylindrical domain $Q$. 
$$\dpar{u}{x'}=\dpar{y}{x}\dpar{x}{x'}= \dpar{y}{x}\dpar{\psi}{x'}\ \ \ \ \Rightarrow \ \ \ \ \ \ \dpar{u}{x'}=u_{x'}=y_x\dpar{\psi}{x'}(\tau_t(x),t), $$
Thus,
\begin{equation*}
    \begin{split}
        \dpar{}{x'}\left( {a}(x') \dpar{u}{x'} \right) &={a}(x')\dpar{\psi}{x'}\dpar{ }{x'}\left(\dpar{y}{x} \right)+\dpar{y}{x}\dpar{ }{x'}\left({a}(x')\dpar{\psi}{x'} \right)\\
        &={a}(x')\dpar{\psi}{x'}\dpar{ }{x}\left(\dpar{y}{x} \right)\dpar{\psi}{x'}+\dpar{y}{x}\dpar{ }{x'}\left({a}(x')\dpar{\psi}{x'} \right),    
    \end{split}
\end{equation*}
or, equivalently,
\begin{equation*}
    \begin{split}
        ({a}(x')u_{x'})_{x'} &=\left( \dpar{\psi}{x'}(\tau_t(x),t) \right)^2{a}(x')y_{xx}+y_x\dpar{ }{x'}\left({a}(x')\dpar{\psi}{x'} \right) \\
        &=\left( \dpar{\psi}{x'}(\tau_t(x),t) \right)^2{a}(\tau_t(x))y_{xx}+y_x\left({a}'(\tau_t(x))\dpar{\psi}{x'}+{a}(\tau_t(x))\dpd{\psi}{x'}(\tau_t(x),t) \right).
    \end{split}
\end{equation*}
Moreover, 
$$\dpar{u}{t}=\dpar{u}{t}+\dpar{u}{x}\dpar{x}{t}=\dpar{y}{t}+\dpar{y}{x}\dpar{\psi}{t}=
y_t+\left(\dpar{\psi}{t}(\tau_t(x),t)\right)y_x.$$

Therefore, applying the diffeomorphism $\tau$, \eqref{eq:PDE} is transformed into the following equation in the cylindrical domain $Q$:
\begin{equation}\label{ec1p}
\hspace*{-1.2cm}	
    \begin{cases}
		y_t-\left( \dpar{\psi}{x'}(\tau_t(x),t) \right)^2{a}(\tau_t(x))y_{xx}+\left(\dpar{\psi}{t}(\tau_t(x),t)-{a}'(\tau_t(x))\dpar{\psi}{x'}-{a}(\tau_t(x))\dpd{\psi}{x'}(\tau_t(x),t) \right)y_x\\
		\hspace*{6cm}+F\left(y,\frac{\partial\psi}{\partial x'}\beta(\tau_t(x))y_x\right)={h}\cara_{_{{\mathcal{O}}}}+{v}^1\cara_{_{{\mathcal{O}}_1}}+{v}^2\cara_{_{{\mathcal{O}}_2}}, & \ \ \ \text{in} \ \ \ {Q},\\
		y=0, & \ \ \ \text{on} \ \ \ {\Sigma},\\
		y(0)=v_0(\tau_0(x))=y_0, & \ \ \ \text{in} \ \ \ \Omega.
	\end{cases}
\end{equation}

For our one-dimensional problem, we use the following diffeomorphism given by rescaling,
$x=\tau_t^{-1}(x')=\psi(x',t)=\frac{x'}{\ell(t)}$
sending 
$$\Omega_t=\{x'\in\mathbb{R} \ | \ 0<x' <\ell(t)\} \qquad \text{to} \qquad \Omega=\{x \in \mathbb{R} \ | \ 0< x<1 \}.$$

Then, we have
$$\dpar{\psi}{x'}(\tau_t(x),t)=\frac{1}{\ell(t)}, \ \ \ \ \ \dpd{\psi}{x'}(\tau_t(x),t)=0, \ \ \ \ \ \ \dpar{\psi}{t}(\tau_t(x),t)=-x'\frac{\ell'(t)}{\ell(t)^2}=-x\ell(t)\frac{\ell'(t)}{\ell(t)^2}=-\frac{\ell'(t)}{\ell(t)}x$$

\begin{equation}\label{ec1b}
    \begin{cases}
		y_t-\frac{1}{\ell(t)^2}{a}(\tau_t(x))y_{xx}-\frac{1}{\ell(t)}\left(\ell'(t)x+{a}'(\tau_t(x)) \right)y_x+F\left(y,\frac{1}{\ell(t)}\beta(\ell(t)x)y_x\right)\\
        \quad={h}\cara_{_{{\mathcal{O}}}}+{v}^1\cara_{_{{\mathcal{O}}_1}}+{v}^2\cara_{_{{\mathcal{O}}_2}}, & \ \ \ \text{in} \ \ \ {Q},\\
		y=0, & \ \ \ \text{on} \ \ \ {\Sigma},\\
		y(0)=v_0(\tau_0(x))=y_0, & \ \ \ \text{in} \ \ \ \Omega.
	\end{cases}
\end{equation}

We have the identities,
$$\left({a}(x')y_x\right)_x= {a}(x')y_{xx}+{a}'(x')\dpar{x'}{x}y_x, \ \ \ \rightarrow \ \ \ \ {a}(x')y_{xx}=\left({a}(x')y_x\right)_x-{a}'(x')\ell(t)y_x,$$
Then,
$$-\frac{1}{\ell(t)^2}{a}(\tau_t(x))y_{xx}=-\frac{1}{\ell(t)^2}\left({a}(\tau_t(x))y_x\right)_x+\frac{1}{\ell(t)}{a}'(\tau_t(x))y_x.$$

The degeneration function $a$ has the property  ${a}(x')={a}(\ell(t)x)=a(\ell(t))\cdot a(x)$.
Thus,
\begin{equation}\label{eqprin}
	\hspace*{-1.2cm}	\begin{cases}
		y_t-\frac{1}{\ell(t)^2}\left({a}(\ell(t)x)y_x\right)_x-\frac{\ell'(t)}{\ell(t)}xy_x+F\left(y,\frac{1}{\ell(t)}\beta(\ell(t)x)y_x\right)={h}\cara_{_{{\mathcal{O}}}}+{v}^1\cara_{_{{\mathcal{O}}_1}}+{v}^2\cara_{_{{\mathcal{O}}_2}}, & \ \ \ \text{in} \ \ \ {Q},\\
		y=0, & \ \ \ \text{on} \ \ \ {\Sigma},\\
		y(0)=v_0(\tau_0(x))=y_0, & \ \ \ \text{in} \ \ \ \Omega,
	\end{cases}
\end{equation}
and we use the notations $b(t)=\frac{a(\ell(t))}{\ell(t)^2}, \ \ B(t)=\frac{\ell'(t)}{\ell(t)}, \ \ C(t)=\frac{\beta(\ell(t))}{\ell(t)}$ to write the latter as
\begin{equation}\label{eqprin}
	\hspace*{-1.2cm}	\begin{cases}
		y_t-b(t)\left({a}(x)y_x\right)_x-B(t)xy_x+F\left(y,C(t)\beta(x)y_x\right)={h}\cara_{_{{\mathcal{O}}}}+{v}^1\cara_{_{{\mathcal{O}}_1}}+{v}^2\cara_{_{{\mathcal{O}}_2}}, & \ \ \ \text{in} \ \ \ {Q},\\
		y=0, & \ \ \ \text{on} \ \ \ {\Sigma},\\
		y(0)=v_0(\tau_0(x))=y_0, & \ \ \ \text{en} \ \ \ \Omega.
	\end{cases}
\end{equation}

\section{CHARACTERIZATION OF NASH QUASI-EQUILIBRIUM AND EQUILIBRIUM}
\label{sec:characterization_convexity}

We use the condition \eqref{ec9} on the sets $\mathcal{O}_{i,d}$. 
In the linear case ($F=0$), the trajectory exact controllability is equivalent to the property of null controllability.

\subsection{\bf Caracterization of the Nash equilibrium}
We first compute $J'_1(h,v^1,v^2)(\widetilde{v}^1,0)=\left.\frac{d}{d\lambda}J_1(h,v^1+\lambda\widetilde{v}^1,v^2)\right|_{\lambda=0}$. We have
$$J_1(h,v^1+\lambda\widetilde{v}^1,v^2)=\frac{\alpha_1}{2}\int_{\mathcal{O}_{1,d}\times(0,T)}|y^\lambda-y_{1,d}|^2\ dx dt+\frac{\mu_1}{2}\int_{\mathcal{O}_{1}\times(0,T)}|v^1+\lambda\widetilde{v}^1|^2\ dx dt,$$
where $y^\lambda$ is the solution of (in the following computations $B$ and $C$ are functions depending only on $t$),
\begin{equation}\label{eqprin2}
	\hspace*{-1.2cm}	\begin{cases}
		y^\lambda_t-b(t)\left(a(x)y^\lambda_x\right)_x-Bxy^\lambda_x+F\left(y^\lambda,C\beta(x)y_x^\lambda\right)={h}\cara_{_{{\mathcal{O}}}}+(v^1+\lambda\widetilde{v}^1)\cara_{_{{\mathcal{O}}_1}}+{v}^2\cara_{_{{\mathcal{O}}_2}}, & \ \ \ \text{in} \ \ \ {Q},\\
		y^\lambda=0, & \ \ \ \text{on} \ \ \ {\Sigma},\\
		y^\lambda(0)=y_0, & \ \ \ \text{in} \ \ \ \Omega.
	\end{cases}
\end{equation}

By continuity with respect to the initial data, we have  $y^\lambda\rightarrow y$, as $\lambda\rightarrow 0$. We denote $w^1=\lim\limits_{\lambda\rightarrow 0}\frac{y^\lambda-y}{\lambda}$.
Subtracting (\ref{eqprin}) from (\ref{eqprin2}), dividing by $\lambda$, Making $\lambda\rightarrow 0$, we get
\begin{equation}\label{eqprin3}
	\begin{cases}
		w^1_t-b(t)\left(a(x)w^1_x\right)_x-Bxw^1_x+D_1 F\left(y,C\beta(x)y_x\right)w^1 + C D_2 F\left(y,C\beta(x)y_x\right)\beta(x)w^1_x =\widetilde{v}^1\cara_{_{{\mathcal{O}}_1}}, & \ \ \ \text{in} \ \ \ {Q},\\
		w^1=0, & \ \ \ \text{on} \ \ \ {\Sigma},\\
		w^1(0)=0, & \ \ \ \text{in} \ \ \ \Omega.
	\end{cases}
\end{equation}

The adjoint system is
\begin{equation}\label{eqprin4}
	\begin{cases}
		-p^1_t-b(t)\left(a(x)p^1_x\right)_x+B \left(xp^1\right)_x + D_1 F\left(y,C\beta(x)y_x\right)p^1 - C\left(D_2 F\left(y,C\beta(x)y_x\right)\beta(x)p^1\right)_x \\
        \quad = \alpha_1(y-y_{1d})\cara_{_{{\mathcal{O}}_{1d}}}, & \ \ \ \text{in} \ \ \ {Q},\\
		p^1=0, & \ \ \ \text{on} \ \ \ {\Sigma},\\
		p^1(0)=0, & \ \ \ \text{in} \ \ \ \Omega.
	\end{cases}
\end{equation}

Multiplying the first equation of (\ref{eqprin4}) by $w^1$ and integrating in $Q$ we get

$$\int_{Q}\widetilde{v}^1\cara_{_{\mathcal{O}_1}}p^1dxdt=\int_{Q}\alpha_1(y-y_{1d})\cara_{\mathcal{O}_{1d}}w^1dxdt, \ \ \ \Rightarrow \ \ \ \ \int_{\mathcal{O}_{1}\times (0,T)}\widetilde{v}^1p^1dxdt=\int_{\mathcal{O}_{1,d}\times (0,T)}\alpha_1(y-y_{1d})w^1dxdt.$$

On the other hand,
$$J'_1(h,v^1,v^2)=\alpha_1\int_{\mathcal{O}_{1,d}\times (0,T)}(y-y_{1d})w^1dxdt+\mu_1\int_{\mathcal{O}_{1}\times (0,T)}v^1\widetilde{v}^1dxdt=0,
\ \ \ \ \ \forall \hat{v}^1\in \mathcal{H}_1$$
and comparing the last two expression we get
$$ \int_{\mathcal{O}_{1}\times (0,T)}\widetilde{v}^1p^1dxdt=-\mu_1\int_{\mathcal{O}_{1}\times (0,T)}v^1\widetilde{v}^1dxdt,
\ \ \ \ \ \forall \widetilde{v}^1\in \mathcal{H}_i,$$
from which
$$v^1=-\frac{1}{\mu_1}p^1\cara_{_{\mathcal{O}_1}}.$$

Analogously, computing $J'_2(h,v^1,v^2)$, we get $v^2=-\frac{1}{\mu_2}p^2\cara_{_{\mathcal{O}_2}}$ , where $p^i, i=1,2$, verify the
following optimality system:
\begin{equation}\label{eq:optimality_system}
	\begin{cases}
		y_t-b(t)\left(a(x)y_x\right)_x-B xy_x+F\left(y,C\beta(x)y_x\right)={h}\cara_{_{{\mathcal{O}}}}-\frac{1}{\mu_1}p^1\cara_{_{\mathcal{O}_1}}-\frac{1}{\mu_2}p^2\cara_{_{\mathcal{O}_2}}, & \ \ \ \text{in} \ \ \ {Q}\\
		-p^i_t-b(t)\left(a(x)p^i_x\right)_x+B \left(xp^i\right)_x + D_1F\left(y,C\beta(x)y_x\right)p^i - C\left(D_2 F(y,C\beta(x)y_x)\beta(x)p^i \right)_x \\
        \quad = \alpha_i(y-y_{id})\cara_{_{{\mathcal{O}}_{id}}}, & \ \ \ \text{in} \ \ \ {Q},\\
		y=0, \ \ \ \ \ p^1=0, \ \ \ \ \ p^2=0& \ \ \ \text{on} \ \ \ \Sigma,\\
		y(0)=y_0, \ \ \ \ \ p^1(T)=0, \ \ \ \ \ \ p^2(T)=0& \ \ \ \text{in} \ \ \ \Omega.
	\end{cases}
\end{equation}

The existence and uniqueness of the system \eqref{eq:optimality_system} is obtained in the following proposition:

\begin{propo}
Let $y_0 \in H^1_a(\Omega)$,  $h \in L^2(0,T,L^2(O))$ and, for $i=1,2$, $y_{i,d} \in L^2(0,T,L^2(O_{i,d}))$. Then the system \eqref{eq:optimality_system} has a unique solution $(y,p^1,p^2)$ satisfying
\begin{equation*}
\begin{split}
\|(y,p^1,p^2)\|_{L^2(0,T,L^2(\Omega)) \cap H^1(0,T,H^2_a(0,1))} \leq C &\left(\|y_0\|_{H^1_a(\Omega)} + \|h\|_{L^2(0,T,L^2(O))} + \sum_{i=1}^2 \|y_{i,d}\|_{L^2(0,T,L^2(O_{i,d}))} \right),
\end{split}    
\end{equation*}
where $C=C(\Omega)$ is a positive constant independent of $\mu_1$ and $\mu_2$.
\end{propo}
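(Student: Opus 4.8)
The plan is to regard \eqref{eq:optimality_system} as a coupled \emph{forward--backward} degenerate parabolic system and to solve it by a fixed--point scheme built on the well--posedness of its elementary blocks, closing the loop through the small parameters $1/\mu_i$. Since $F\in C^2$ has bounded first and second derivatives, the maps $s\mapsto F(\cdot,s)$ and $s\mapsto DF(\cdot,s)$ are globally Lipschitz, $|F(y,C\beta y_x)|\le |F(0,0)|+\|DF\|_\infty\,(|y|+\|C\|_\infty\sqrt a\,|y_x|)$ by \eqref{eq:cond_beta}, and we use $F(0,0)=0$ (as is standard, so that $y\equiv0$ solves the uncontrolled equation). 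I would define $\Lambda:z\mapsto y$ on a ball of the natural energy space by: solving the two \emph{linear} backward equations for $p^1,p^2$ with coefficients $D_1F(z,C\beta z_x)$, $C\,D_2F(z,C\beta z_x)\beta$ and right--hand sides $\alpha_i(z-y_{i,d})\mathbb{1}_{O_{i,d}}$ frozen at $z$; then solving the forward equation for $y$ (keeping the genuine nonlinearity $F(y,C\beta y_x)$, which is admissible as $F$ is globally Lipschitz) with right--hand side $h\mathbb{1}_{O}-\sum_i\frac{1}{\mu_i}p^i\mathbb{1}_{O_i}$; and setting $\Lambda(z)=y$. A fixed point of $\Lambda$ is exactly a solution of \eqref{eq:optimality_system}.

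The core is the energy estimate for each block. Testing the backward equation with $p^i$ on $(t,T)$: the degenerate boundary terms in $b(t)a(x)p^i_x p^i$ at $x=0,1$ vanish in the weakly degenerate regime; $\int_\Omega B\,(xp^i)_x p^i=\tfrac12 B\,\|p^i\|_{L^2(\Omega)}^2$ after integration by parts; the first--order coefficient is absorbed through $|C\,D_2F\,\beta\,p^i_x p^i|\le\varepsilon\,a|p^i_x|^2+C_\varepsilon|p^i|^2$ using $|\beta|\le\sqrt a$, the gradient part entering $b_0\int_\Omega a|p^i_x|^2$ with $b_0:=\min_{[0,T]}b(t)>0$; a backward Gronwall inequality together with the weighted Poincar\'e inequality in $H^1_a(\Omega)$ then gives
\[
\|p^i\|_{C([0,T];L^2(\Omega))}^2+\int_Q a\,|p^i_x|^2\,dx\,dt\;\le\;C\,\alpha_i^2\,\|z-y_{i,d}\|_{L^2(O_{i,d}\times(0,T))}^2,
\]
with $C$ depending on $\Omega$, $T$, the constants in \eqref{eq:cond_beta}, $\|B\|_\infty$, $\|C\|_\infty$, $\|DF\|_\infty$ and $b_0$, but \emph{not} on $\mu_1,\mu_2$. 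The analogous estimate for the forward equation — using in addition $x/\sqrt{a(x)}\le 1/\sqrt{a(1)}$ to control the drift $Bxy_x$ and the Lipschitz bound on $F$ to handle $F(y,C\beta y_x)$ — controls $y$ in the same norm by $\|y_0\|_{H^1_a(\Omega)}+\|h\|_{L^2(Q)}+\sum_i\frac1{\mu_i}\|p^i\|_{L^2(Q)}$. Combining the two, for $\mu_1,\mu_2$ large — the regime used in Theorem \ref{thm:local_null_controllability} and Proposition \ref{thm:nash_equilibrium} — the coupling $\sum_i\frac1{\mu_i}\|p^i\|_{L^2(Q)}$ is a contractive perturbation, so $\Lambda$ maps a fixed ball into itself with radius bounded by the right--hand side of the asserted inequality uniformly in $\mu_1,\mu_2$; continuity and compactness of $\Lambda$ (parabolic smoothing, plus the Lipschitz dependence $z\mapsto D_jF(z,C\beta z_x)$ coming from bounded $D^2F$) give a fixed point via Schauder's theorem, while uniqueness follows by applying the same energy estimates to the difference of two solutions. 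The asserted $H^2_a$--regularity in space and time is then obtained by a bootstrap: the equations give $(ay_x)_x,(ap^i_x)_x\in L^2(Q)$, hence $y,p^i\in L^2(0,T;H^2_a(\Omega))$ and $y_t,p^i_t\in L^2(Q)$, by the maximal parabolic regularity for the non--autonomous degenerate operator $b(t)(a\,\cdot_x)_x$ from \cite{GYL-Carleman-2025, DemarqueLimacoViana_deg_sys2020}; and the final bound is $\mu$--independent because $\|D_jF(y,C\beta y_x)\|_\infty\le\|DF\|_\infty$ regardless of the solution.

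The main obstacle is the forward--backward coupling itself: the energy identity for $y$ on $(0,t)$ involves $p^i$, and hence $y$, at later times, so no direct Gronwall closes it. This is exactly why one must work through the factors $1/\mu_i$ — equivalently, treat $y\mapsto\sum_i\frac1{\mu_i}\mathbb{1}_{O_i}\,p^i[y]$ as a small, compact perturbation of the forward solver for $\mu_i$ large — and it is also the origin of the $\mu$--independence claimed in the statement. The remaining ingredients are standard in the degenerate framework: the weighted Poincar\'e inequality and the vanishing of the degenerate boundary terms (from \cite{Alabau_cannarsa_fragnelli-06}), and the $H^2_a$ maximal parabolic regularity for the time--dependent coefficient $b(t)$ (from \cite{GYL-Carleman-2025}); these we invoke rather than reprove, and we omit the by--now classical Gronwall and interpolation computations, a full account being deferred to Appendix \ref{appendix A}.
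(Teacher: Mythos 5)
Your argument is essentially correct in its analytic ingredients, but it follows a genuinely different route from the paper's. The paper (Appendix \ref{appendix A}) first performs the time reversal $\varphi^i(t)=p^i(T-t)$ so as to treat \eqref{eq:optimality_system} as a fully forward-in-time coupled system, and then runs a Galerkin scheme in an eigenbasis of $H^1_a(0,1)$: three successive a priori estimates (testing with $(y_m,\varphi^i_m)$, with $(y_{m,t},\varphi^i_{m,t})$, and with $(-(ay_{m,x})_x,-(a\varphi^i_{m,x})_x)$) close by a direct Gronwall argument because all equations evolve in the same time direction, and the limit is taken by Aubin--Lions using the compact embedding $H^2_a\hookrightarrow H^1_a$. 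You instead keep the forward--backward structure, freeze the coefficients of the adjoint equations at a candidate $z$, and close a Schauder fixed point by exploiting the smallness of the couplings $1/\mu_i$. What your approach buys is an honest treatment of the forward--backward coupling, which you correctly identify as the main obstacle and which the paper's time reversal partially obscures (as written, replacing $p^i(t)$ by $p^i(T-t)$ should also reverse time in the source term $\alpha_i(y-y_{i,d})$, a point the appendix glosses over). What the paper's approach buys is the full statement for any fixed $\mu_i>0$, plus the explicit Estimates I--III that are reused elsewhere (e.g.\ the energy bounds for $\phi$ and $\theta$ in the convexity argument of Section 3).

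The one point you should repair is precisely this last one: your existence, invariance-of-the-ball, and uniqueness steps all require $\mu_1,\mu_2$ sufficiently large so that $\sum_i\frac{1}{\mu_i}\mathbb{1}_{O_i}p^i[\cdot]$ is a small perturbation, whereas the proposition is stated for arbitrary $\mu_i>0$ and only claims that the \emph{constant} $C$ is independent of $\mu_1,\mu_2$. As written, your proof establishes a weaker statement than the one asserted (albeit one sufficient for the regime in which the proposition is actually invoked later in the paper). To recover the full claim you would either need to add a hypothesis such as $\mu_i\ge\mu_0>0$ and run the paper's all-forward Galerkin/Gronwall argument, or justify the fixed point without smallness (e.g.\ by a continuation or Leray--Schauder degree argument), neither of which is currently in your write-up. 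Two minor points: you should state explicitly that you assume $F(0,0)=0$ (the paper uses this implicitly), and the compactness of $\Lambda$ needed for Schauder should be pinned to the compact embedding $H^2_a(0,1)\hookrightarrow H^1_a(0,1)$ together with Aubin--Lions rather than to an unspecified ``parabolic smoothing.''
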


A special case of this proposition for a system two equations is given in \cite{GYL-Carleman-2025}. For completeness we sketch the proof for our system in Appendix \ref{appendix A}.

\begin{remark}\label{rmk:without_Jac}
If one defines the functionals without the Jacobian, i.e., 
\begin{equation*}
\hat J_i(\hat h,\hat v^1,\hat v^2) = \frac{\alpha_i}{2} \int_{\hat O_{i,d}} |u-u_{i,d}|^2 dxdt + \frac{\mu_i}{2} \int_{\hat O_i} |\hat v^i|^2 dxdt,
\end{equation*}
after the coordinate transformation one gets
\begin{equation*}
J_i(h,v^1,v^2)=\frac{\alpha_i}{2}\int_{\mathcal{O}_{i,d}\times(0,T)}|y-y_{i,d}|^2 |Jac (\tau)| \ dx dt+\frac{\mu_i}{2}\int_{\mathcal{O}_{i} \times(0,T)}|v^i|^2 |Jac (\tau)| \ dx dt.   
\end{equation*}
Then, performing the same calculations performed above for the characterization of the Nash equilibrium,
$$J'_1(h,v^1,v^2)=\alpha_1\int_{\mathcal{O}_{1,d}\times (0,T)}(y-y_{1d})w^1dxdt+\mu_1\int_{\mathcal{O}_{1}\times (0,T)}v^1\widetilde{v}^1dxdt=0,
\ \ \ \ \ \forall \hat{v}^1\in \mathcal{H}_1,$$
and using the following modification of the adjoint equation,
\begin{equation*}
\begin{split}
-p^1_t-b(t)\left(a(x)p^1_x\right)_x+B \left(xp^1\right)_x + D_1 F\left(y,C\beta(x)y_x\right)p^1 - C\left(D_2 F\left(y,C\beta(x)y_x\right)\beta(x)p^1\right)_x \\
        \quad = \alpha_1(y-y_{1d})\cara_{_{{\mathcal{O}}_{1d}}} |Jac (\tau)|, & \ \ \ \text{in} \ \ \ {Q},   
\end{split}
\end{equation*}
we get analogously, for $i=1,2$,
$$v^i=-\frac{1}{\mu_i} p^i\cara_{_{\mathcal{O}_i}} |Jac (\tau)|.$$
\end{remark}

\subsection{\bf Nash quasi-equilibria and Nash equilibria}

Since the equation is not linear, the convexity of  functionals $J_i$ is not guaranteed. 
Notice that, 1.4 is equivalent to
%
%
$$\alpha_i\int_{\mathcal{O}_{i,d}\times (0,T)}(y-y_{id})w^idxdt+\mu_i\int_{\mathcal{O}_{i}\times (0,T)}v^i\widetilde{v}^idxdt=0,
  \ \ \ \ \ \forall \widetilde{v}^i\in \mathcal{H}_i, \ \ \ \ \ i=1,2,$$
where $w^i$ satisfies  
    \begin{equation}\label{eqprin44}
   	\begin{cases}
   		w^i_t-b(t)\left(a(x)w^i_x\right)_x-B xw^i_x + D_1 F\left(y,C\beta(x)y_x\right)w^1 + C D_2 F(y,C\beta(x)y_x)\beta(x) w^i_x = \widetilde{v}^i\cara_{_{{\mathcal{O}}_i}}, & \ \ \ \text{in} \ \ \ {Q},\\
   		w^i=0, & \ \ \ \text{on} \ \ \ {\Sigma},\\
   		w^i(0)=0, & \ \ \ \text{in} \ \ \ \Omega.
   	\end{cases}
   \end{equation}
   
Fix $i=1$  and given $\lambda\in \rea$ and $\widetilde{v}^1, \overline{v}^1$ in $\mathcal{H}_1$, define 
$$P(\lambda) \rightarrow \langle D_1 J_1(h,v^1+\lambda\widetilde{v}^1,v^2 ),\overline{v}^1 \rangle$$
such that  
$$\langle D_1 J_1(h,v^1+\lambda\widetilde{v}^1,v^2 ),\overline{v}^1 \rangle=\alpha_1\int_{\mathcal{O}_{1,d}\times (0,T)}(y^\lambda-y_{1d})q^\lambda dxdt+\mu_1\int_{\mathcal{O}_{1}\times (0,T)}(v^1+\lambda\widetilde{v}^1)\overline{v}^1dxdt$$
where 
   \begin{equation}\label{eqprin5}
   	\hspace*{-1.2cm}	\begin{cases}
   		y^\lambda_t-b(t)\left(a(x)y^\lambda_x\right)_x-Bxy^\lambda_x+F\left(y^\lambda,C\beta(x)y^\lambda_x\right)={h}\cara_{_{{\mathcal{O}}}}+(v^1+\lambda\overline{v}^1)\cara_{_{{\mathcal{O}}_1}}+{v}^2\cara_{_{{\mathcal{O}}_2}}, & \ \ \ \text{in} \ \ \ {Q},\\
   		y^\lambda=0, & \ \ \ \text{on} \ \ \ {\Sigma},\\
   		y^\lambda(0)=y_0, & \ \ \ \text{in} \ \ \ \Omega,
   	\end{cases}
   \end{equation}
and $q^\lambda$ the derivative of the state $y^\lambda$ with respect to $v^1$ in the direction $\widetilde{v}^1$, i.e. the solution to   
   \begin{equation}\label{eqprin6}
   	\begin{cases}
   		q^\lambda_t-b(t)\left(a(x)q^\lambda_x\right)_x-Bxq^\lambda_x + D_1 F\left(y^\lambda, C\beta(x) y^\lambda_x \right)q^\lambda \\
        \quad + C D_2 F\left(y^\lambda, C\beta(x) y^\lambda_x \right)\beta(x) q^\lambda_x 
        = \widetilde{v}^1\cara_{_{{\mathcal{O}}_1}}, & \ \ \ \text{in} \ \ \ {Q},\\
   		q^\lambda=0, & \ \ \ \text{on} \ \ \ {\Sigma},\\
   		q^\lambda(0)=0, & \ \ \ \text{in} \ \ \ \Omega,
   	\end{cases}
   \end{equation}
   where we are denoting $y=\left. y^\lambda\right|_{\lambda=0}$ and $q=\left. q^\lambda\right|_{\lambda=0}$. Using the systems above we consider
    \begin{equation}\label{eq:plambda-p0)}
    \begin{split}
        P(\lambda)-P(0) = &\alpha_1\int_{\mathcal{O}_{1,d}\times (0,T)}(y^\lambda-y_{1d})q^\lambda dxdt -\alpha_1\int_{\mathcal{O}_{1,d}\times (0,T)}(y-y_{1d})q dxdt\\
        &+\lambda \mu_1 \int_{\mathcal{O}_{1}\times (0,T)}\overline{v}^1\widetilde{v}^1dxdt.    
    \end{split}
    \end{equation}
   
The adjoint system of (\ref{eqprin6}) is   
   \begin{equation}\label{eqprin7}
   	\begin{cases}
   		-\phi^\lambda_t-b(t)\left(a(x)\phi^\lambda_x\right)_x+B \left(x\phi^\lambda\right)_x + D_1 F\left(y^\lambda, C\beta(x) y^\lambda_x \right)\phi^\lambda\\
        \quad - C\left(D_2 F(y^\lambda, C \beta(x) y^\lambda_x)\beta(x)\phi^\lambda \right)_x = \alpha_1(y^\lambda-y_{1d})\cara_{_{{\mathcal{O}}_{1d}}}, & \ \text{in} \ \ {Q},\\
   		\phi^\lambda=0, & \ \text{on} \ \ {\Sigma},\\
   		\phi^\lambda(T)=0, & \ \text{in} \ \ \Omega.
   	\end{cases}
   \end{equation}
   
   Multiplying (\ref{eqprin6}) by $\phi^\lambda$ and integrating on $Q$ we get
   \begin{equation}
       \begin{split}
            \int_{Q}\left(q^\lambda_t-b(t)\left(a(x)q^\lambda_x\right)_x-Bxq^\lambda_x + D_1 F\left(y^\lambda,C\beta(x)y^\lambda_x \right)q^\lambda \right. \\
            \left. \quad + C D_2 F\left(y^\lambda, C\beta(x)y^\lambda_x \right)\beta(x) q^\lambda_x \right) \phi^\lambda dxdt
            =\int_{Q} \widetilde{v}^1\cara_{_{{\mathcal{O}}_1}}\phi^\lambda dxdt.      
       \end{split}
   \end{equation}
   
   Integrating by parts, and using (\ref{eqprin7}) we obtain
   $$\alpha_1\int_{Q}(y^\lambda-y_{1d})\cara_{_{{\mathcal{O}}_{1d}}}q^\lambda dxdt=\int_{Q} \widetilde{v}^1\cara_{_{{\mathcal{O}}_1}}\phi^\lambda dxdt,   $$
   and, for $\lambda=0$,
      $$\alpha_1\int_{Q}(y-y_{1d})\cara_{_{{\mathcal{O}}_{1d}}}q dxdt = \int_{Q} \widetilde{v}^1\cara_{_{{\mathcal{O}}_1}}\phi dxdt.$$
   
   Substituting the latter in \eqref{eq:plambda-p0)} we obtain
   \begin{equation}\label{ec212}
   	\frac{P(\lambda)-P(0)}{\lambda}=\int_{\mathcal{O}_{1}\times (0,T)}\left(\frac{\phi^\lambda-\phi}{\lambda}\right)\widetilde{v}^1 dxdt+\mu_1 \int_{\mathcal{O}_{1}\times (0,T)}\overline{v}^1\widetilde{v}^1dxdt.
   \end{equation}
   
   Notice that, using the system (\ref{eqprin7}),   
   \begin{multline}\label{ec213}
   	-(\phi^\lambda-\phi)_t-b(t)\left(a(x)(\phi^\lambda-\phi)_x\right)_x+B\left(x(\phi^\lambda-\phi)\right)_x\\
   	+\left[ D_1F\left(y^\lambda,C\sqrt{a}y^\lambda_x  \right)-D_1F\left(y,C\sqrt{a}y_x  \right)  \right]\phi^\lambda
   +D_1F\left(y,C\sqrt{a}y_x  \right)(\phi^\lambda-\phi)\\
   	-C \left(  \left( D_2F\left(y^\lambda,C\sqrt{a}y^\lambda_x  \right)-D_2F\left(y,C\sqrt{a}y_x  \right)  \right)\sqrt{a}\phi^\lambda+D_2F\left(y,C\sqrt{a}y_x  \right)\sqrt{a}(\phi^\lambda-\phi) \right)_x \\
    =\alpha_1(y^\lambda-y)\cara_{_{{\mathcal{O}}_{1d}}}
   \end{multline}

   and using the system (\ref{eqprin5}) we obtain
   \begin{equation}\label{ec214}
   	(y^\lambda-y)_t-b(t)\left(a(x)(y^\lambda-y)_x\right)_x-Bx(y^\lambda-y)_x+F\left(y^\lambda,C\sqrt{a}y^\lambda_x  \right)-F\left(y,C\sqrt{a}y_x  \right)=\lambda\overline{v}^1\cara_{_{{\mathcal{O}}_1}}
   \end{equation}
   Let us define $\eta=\lim_{\lambda\to 0} \frac{1}{\lambda}(\phi^\lambda - \phi)$ and $\theta=\lim_{\lambda\to 0} \frac{1}{\lambda}(y^\lambda - y)$. Dividing (\ref{ec213}) and (\ref{ec214}) by $\lambda$ and making $\lambda \to 0$ we can deduce the following system,  
   \begin{equation}
   	\label{eq:accoplatesystem}
   	\left\{\begin{aligned}
   		&-\eta_t -b(t)\left( a(x) \eta_x \right)_x +B\left(x\eta \right)_x +D^2_{11}F\left(y,C\sqrt{a}y_x  \right)\phi \theta+C\cdot D^2_{12}F\left(y,C\sqrt{a}y_x  \right)\phi \sqrt{a}\theta_x \\
        &+D_{1}F\left(y,C\sqrt{a} \right)\eta -C\left(D^2_{21}F\left(y,C\sqrt{a}y_x  \right)\phi \sqrt{a} \theta\right)_x-C^2\left(D^2_{22}F\left(y,C\sqrt{a}y_x  \right)\phi a  \theta_x\right)_x \\ &+C\left(D_2F\left(y,C\sqrt{a} \right)\sqrt{a}\eta\right)_x=\alpha_1 \theta \cara_{_{{\mathcal{O}}_{1d}}} &&\text{in}&& Q,\\
   		&\theta_t - b(t)\left( a(x) \theta_x  \right )_x -B x\theta_x+ D_1F\left(y,C\sqrt{a}y_x  \right)\theta+C\cdot D_2F\left(y,C\sqrt{a} y_x  \right)\sqrt{a}\theta_x = \bar{v}^{1} \cara_{_{{\mathcal{O}}_1}}&&\text{in}&& Q,\\ 
   		&\eta(0,t)=\eta(1,t)=0,\ \theta(0,t)=\theta(1,t)=0 &&\text{on} && (0,T), \\
   		&\eta(\cdot,T)=0, \ \theta(\cdot,0)=0 &&\text{in} && \Omega.
   	\end{aligned}
   	\right.
   \end{equation}
   
Or, in a simpler form
      \begin{equation}
   	\label{eq:accoplatesystem1}
   	\left\{\begin{aligned}
   		&-\eta_t -b(t)\left( a(x) \eta_x \right)_x +B\left(x\eta \right)_x +D_{1}F\left(y,C\sqrt{a}y_x  \right)\eta-C\left(D_2F\left(y,C\sqrt{a}y_x  \right)\sqrt{a}\eta\right)_x\\
        &+\mathcal{N}(\eta,\theta)=\alpha_1 \theta \cara_{_{{\mathcal{O}}_{1d}}} &&\text{in}&& Q,\\
   		&\theta_t - b(t)\left( a(x) \theta_x  \right )_x -Bx\theta_x+ D_1F\left(y,C\sqrt{a}y_x  \right)\theta+C\cdot D_2F\left(y,C\sqrt{a}y_x  \right)\sqrt{a}\theta_x = \bar{v}^{1} \cara_{_{{\mathcal{O}}_1}}&&\text{in}&& Q,\\ 
   		&\eta(0,t)=\eta(1,t)=0,\ \theta(0,t)=\theta(1,t)=0 &&\text{on} && (0,T), \\
   		&\eta(\cdot,T)=0, \ \theta(\cdot,0)=0 &&\text{in} && \Omega.
   	\end{aligned}
   	\right.
   \end{equation}
where 
   \begin{multline*}
\mathcal{N}(\eta,\theta)=D^2_{11}F\left(y,C\sqrt{a}y_x  \right)\phi \theta+C\cdot D^2_{12}F\left(y,C\sqrt{a}y_x  \right)\phi \sqrt{a}\theta_x\\
-C\left(D^2_{21}F\left(y,C\sqrt{a}y_x  \right)\phi \sqrt{a} \theta\right)_x-C^2\left(D^2_{22}F\left(y,C\sqrt{a}y_x  \right)\phi a  \theta_x\right)_x.
   \end{multline*}
   
   Then, making $\lambda \to 0$ in (\ref{ec212}), we obtain the second derivative
   \begin{equation}
   	\label{eq:D2J1_w1_w2}
   	\langle D_1^2 J_1(h,v^1,v^2), (\bar{v}^{1}, \tilde{v}^{1}) \rangle = \int_{O_1 \times (0,T)} \eta \tilde{v}^{1} \ dxdt + \mu_1 \int_{O_1 \times (0,T)} \bar{v}^{1}\tilde{v}^{1} \ dxdt, \ \forall\bar{v}^{1},\Tilde{v}^{1} \in L^{2}(O_{1} \times (0,T)). 
   \end{equation}
   
   In particular, taking $\Bar{v}^{1}=\Tilde{v}^{1}$
   \begin{equation}
   	\label{eq:D2J1}
   	\langle D_1^2 J_1(h,v^1,v^2), (\bar{v}^{1}, \bar{v}^{1}) \rangle = \int_{O_1 \times (0,T)} \eta \bar{v}^{1} \ dxdt + \mu_1 \int_{O_1 \times (0,T)} |\bar{v}^{1}|^{2} \ dxdt. 
   \end{equation}
   
   Let us prove that there exists a constant $C> 0$ such that
   \begin{equation}
   	\label{eq:integral_eta_w1_bounded}
   	\left|\int_{O_1 \times (0,T)} \eta \tilde{v}^{1} \ dxdt\right|\leq C \left( 1 + 3\left( \|h\|_{L^{2}(O \times (0,T))}+\|y_{0}\| +\sum_{i=1}^{2}\|y_{i,d}\|^{2}_{L^{2}((0,T),L^2(O_{i,d}))} \right) \right) \|\bar{v}^{1}\|^{2}_{L^{2}(O_{1} \times (0,T))}.
   \end{equation}
   
   In fact, using \eqref{eq:accoplatesystem}, integration by parts and one more time \eqref{eq:accoplatesystem}, we get
  {\small \begin{eqnarray*}
   	 \int_{O_1 \times (0,T)} \eta \bar{v}^{1} \ dxdt &=& \int_{\Omega \times (0,T)} \eta\left(\theta_t - b(t)\left( a(x) \theta_x  \right )_x -Bx\theta_x+ D_1F\left(y,C\sqrt{a}y_x  \right)\theta \right.\\
     &&\left. +C\cdot D_2F\left(y,C\sqrt{a}y_x  \right)\sqrt{a}\theta_x  \right) \ dxdt \\
   	&=&\int_{\Omega \times (0,T)} \theta\left(-\eta_t -b(t)\left( a(x) \eta_x \right)_x +B\left(x\eta \right)_x +D_{1}F\left(y,C\sqrt{a}y_x  \right)\eta \right.\\
    &&\left. -C\left(D_2F\left(y,C\sqrt{a}y_x  \right)\sqrt{a}\eta\right)_x\right) \ dxdt\\
   	&=& \int_{\Omega\times(0,T)} \left( \alpha_{1}|\theta|^{2}1_{O_{1,d}}  - \mathcal{N}(\eta,\theta)\theta \right) dxdt.
   \end{eqnarray*}}

Substituting the value of $\mathcal{N}$ and integrating by parts we get
   \begin{equation*}
   \begin{split}
   	\int_{O_1 \times (0,T)} \eta \bar{v}^{1} \ dxdt &= \int_{\Omega\times(0,T)}\alpha_{1}|\theta|^{2}1_{O_{1,d}} - \int_{\Omega\times(0,T)}D^2_{11}F\left(y,C\sqrt{a}y_x  \right)\phi |\theta|^2 \\
    &-C\cdot D^2_{12}F\left(y,C\sqrt{a}y_x  \right)\phi\theta \sqrt{a}\theta_x 	+\int_{\Omega\times(0,T)}C\left(D^2_{21}F\left(y,C\sqrt{a}y_x  \right)\phi \sqrt{a} \theta\right)_x\theta \\
    &+ \int_{\Omega\times(0,T)}C^2\left(D^2_{22}F\left(y,C\sqrt{a}y_x  \right)\phi a  \theta_x\right)_x\theta\\
   	&= \int_{\Omega\times(0,T)}\alpha_{1}|\theta|^{2}1_{O_{1,d}} - \int_{\Omega\times(0,T)}D^2_{11}F\left(y,C\sqrt{a}y_x  \right)\phi |\theta|^2 \\
    &-C\cdot D^2_{12}F\left(y,C\sqrt{a}y_x  \right)\phi\theta \sqrt{a}\theta_x
   	-\int_{\Omega\times(0,T)}C\cdot D^2_{21}F\left(y,C\sqrt{a}y_x  \right)\phi\theta \sqrt{a} \theta_x\\
    &- \int_{\Omega\times(0,T)}C^2\cdot D^2_{22}F\left(y,C\sqrt{a}y_x  \right)\phi |\sqrt{a}\theta_x|^2.
    \end{split}
    \end{equation*}

We estimate each term in the last expression using that all derivatives of $F$ up to second order are bounded by a constant, say $K$. First,
$$\int_{0}^{T}\int_{\Omega}D^2_{11}F\left(y,C\sqrt{a}y_x  \right)\phi |\theta|^2\leq C\int_{0}^{T}\int_{\Omega}\phi(x,t) |\theta|^2\leq C\int_{0}^{T}\|\phi(t)\|_{L^\infty(\Omega)}\int_{\Omega} |\theta|^2, $$
   and since $H^1_a(\Omega)\hookrightarrow L^\infty(\Omega)$ continously, then $\|f\|_{L^\infty(\Omega)}\leq \|f\|_{H^1_a(\Omega)}=\|f\|_{L^2(\Omega)}+\|af_x\|_{L^2(\Omega)}$. Thus,
    $$\int_{0}^{T}\int_{\Omega}D^2_{11}F\left(y,C\sqrt{a}y_x  \right)\phi |\theta|^2 \leq K \int_{0}^{T} \|\sqrt{a}\phi_x(t)\|_{L^2(\Omega)}\|\theta(t)\|^2_{L^2(\Omega)}.$$

Analogously, for $i \neq j$, using Hölder inequality, 
\begin{equation*}
    \begin{split}    \int_{0}^{T}\int_{\Omega}C(t)D^2_{ij}F\left(y,C\sqrt{a}y_x  \right)\phi\theta \sqrt{a} \theta_x &\leq 
C\int_{0}^{T}\|\phi(t)\|_{L^\infty(\Omega)}\int_{\Omega} \theta \sqrt{a} \theta_x \\
&\leq C\int_{0}^{T} \|\sqrt{a}\phi_x(t)\|_{L^2(\Omega)} \|\theta\|_{L^2(\Omega)}\|\sqrt{a}\theta_x(t)\|_{L^2(\Omega)}.    
    \end{split}
\end{equation*}

Finally,
\begin{equation*}
    \begin{split}
        \int_{0}^{T}\int_{\Omega}C^2(t)D^2_{22}F\left(y,C\sqrt{a}y_x  \right)\phi |\sqrt{a}\theta_x|^2 &\leq 
   K\int_{0}^{T}\|\phi(t)\|_{L^\infty(\Omega)}\int_{\Omega} |\sqrt{a}\theta_x|^2\\
   &\leq K \int_{0}^{T} \|\sqrt{a}\phi_x(t)\|_{L^2(\Omega)} \|\sqrt{a}\theta_x(t)\|^2_{L^2(\Omega)}.      
    \end{split}
\end{equation*}

Therefore,  
   \begin{multline*}
\left| 	\int_{O_1 \times (0,T)} \eta \bar{v}^{1} \ dxdt \right|\leq C\left( \int_{0}^{T} \|\theta(t)\|^2_{L^2(\Omega)}dt + \int_{0}^{T} \|\sqrt{a}\phi_x(t)\|_{L^2(\Omega)}\|\theta(t)\|^2_{L^2(\Omega)}dt\right.\\
\left. +\int_{0}^{T} \|\sqrt{a}\phi_x(t)\|_{L^2(\Omega)} \|\theta\|_{L^2(\Omega)}\|\sqrt{a}\theta_x(t)\|_{L^2(\Omega)}dt+\int_{0}^{T} \|\sqrt{a}\phi_x(t)\|_{L^2(\Omega)} \|\sqrt{a}\theta_x(t)\|^2_{L^2(\Omega)}dt\right).
   \end{multline*}

      
   From (\ref{eqprin7}) with $\lambda = 0$ and (\ref{eq:accoplatesystem}), using the energy estimates of Appendix \ref{appendix A}, where $\phi$ verifies the same equation that $p^1$, we have
   \begin{equation*}
   \begin{split}
   	&\|(a\phi_{x})_x\|^{2}_{L^{2}(Q)} + 
   	\|\sqrt{a} \phi_{x}\|^{2}_{L^\infty(0,T,L^2(Q))} + \|\phi\|^2_{L^{\infty}(0,T,L^{2}(\Omega))} \\
    &\leq C\left( \|h\|^{2}_{L^{2}((0,T),L^2(O))}+\|y_{0}\|_{H^{1}_{a}(0,1)}+\sum_{i=1}^{2}\|y_{i,d}\|^{2}_{L^{2}((0,T),L^2(O_{i,d}))} \right),
    \end{split}
   \end{equation*}
   and, analogously, we have energy estimates for $\theta$, 
   \begin{equation*}
   \|(a \theta_{x})_x\|^{2}_{L^{2}(Q)} + 
   	\|\sqrt{a} \theta_{x}\|^{2}_{L^\infty(0,T,L^2(Q))} + \|\theta\|^{2}_{L^{\infty}(0,T,L^{2}(\Omega))} \leq C\left(\|\bar{v}^{1}\|^{2}_{L^{2}(O_{1,d}\times(0,T))}\right).
   \end{equation*}

   Thus, using the energy estimates we have
   \begin{eqnarray*}
\left|\int_{O_1 \times (0,T)} \eta \tilde{v}^{1} \ dxdt\right|&\leq& C \left( \|\theta\|_{L^\infty(0,T,L^2(\Omega))}^2 T + \|\sqrt{a}\phi_x\|_{L^\infty(0,T,L^2(\Omega))} \|\theta\|_{L^\infty(0,T,L^2(\Omega))}^2 T \right. \\
& &\left. +\|\sqrt{a}\phi_x\|_{L^\infty(0,T,L^2(\Omega))} \|\theta\|_{L^\infty(0,T,L^2(\Omega))} \|\sqrt{a}\theta_x\|_{L^\infty(0,T,L^2(\Omega))} T \right. \\
&&\left. +\|\sqrt{a}\phi_x\|_{L^\infty(0,T,L^2(\Omega))} \|\sqrt{a}\theta_x\|_{L^\infty(0,T,L^2(\Omega))}^2 T \right)  \\
   	&\leq& \tilde C \left( 1 + 3\left( \|h\|_{L^{2}( O \times (0,T))}+\|y^{0}\|_{H^{1}_{a}(\Omega)}^{2} +\sum_{i=1}^{2}\|y_{i,d}\|^{2}_{L^{2}((0,T),L^2(O_{i,d}))} \right) \right) \|\bar{v}^{1}\|^{2}_{L^{2}(O_{1} \times (0,T))}.
   \end{eqnarray*}
   
   Substituting the latter in \eqref{eq:D2J1} we get
   $$
   \langle D_1^2 J_1(h,v^1,v^2), (\bar{v}^{1}, \bar{v}^{1}) \rangle \geq (\mu_1 - C) \int_{O_1 \times (0,T)} |\tilde{v}^{1}|^2 \ dxdt.
   $$
   
   Taking $\mu_1$ big enough, this shows that $J_1$ is convex. Analogously, by a similar computation, taking $\mu_2$ big enough, we have that $J_2$ is convex. Thus, the Nash quasi-equilibrium par $(v^1,v^2)$ is in fact a Nash equilibrium.
      
   
\section{Null controllability of the linearized system}
\label{Sec:null_linearized_system}

   \subsection{Hilbert spaces in the divergence case}
   Following [1], for the system in divergence form, we take the following weighted Hilbert spaces. In the \emph{weakly degenerate} case, we consider
   $$H^1_a(0,1)=\left\{ u\in L^2(0,1) \ : \ u \ \text{is absolutely continuous in} \ \ (0,1], \sqrt{a}u_x\in L^2(0,1) \ \text{and} \  u(0)=u(1)=0 \right\}$$
   and 
   $$H^2_a(0,1)=\left\{u\in H^1_a(0,1) \ : \ au_x\in H^1(0,1) \right\}.$$
   
   In both cases, we consider inner products and norms given by
   $$\langle u,v \rangle_{H^1_a(0,1)}=\langle u,v \rangle_{L^2(0,1)}+\langle \sqrt{a}u_x,\sqrt{a}v_x \rangle_{L^2(0,1)}, \ \ \|u\|^2_{H^1_a(0,1)}=\|u\|^2_{L^2(0,1)}+\|\sqrt{a} u_x\|^2_{L^2(0,1)},$$
   for all $u, v\in H^1_a(0,1)$, and
  $$\langle u,v \rangle_{H^2_a(0,1)}=\langle u,v \rangle_{H^1_a(0,1)}+\langle ({a}u_x)_x,({a}v_x)_x \rangle_{L^2(0,1)}, \ \ \|u\|^2_{H^2_a(0,1)}=\|u\|^2_{H^1_a(0,1)}+\|({a}u_x)_x\|^2_{L^2(0,1)},$$
  for all $u, v\in H^2_a(0,1)$.\\
  

   
We compute formally the derivative a zero of the map $\mathcal{T}$ defined by the optimality system \eqref{eq:optimality_system}, i.e.  
$$\mathcal{T}(y,p^1,p^2)=(\mathcal{T}_0(y,p^1,p^2),\mathcal{T}_1(y,p^1,p^2),\mathcal{T}_2(y,p^1,p^2))$$
where, for $i=1,2$,   
\begin{eqnarray*}
\mathcal{T}_0(y,p^1,p^2)&=&y_t-b(t)\left(a(x)y_x\right)_x-Bxy_x+F\left(y,C\beta(x)y_x\right)+\frac{1}{\mu_1}p^1\cara_{_{\mathcal{O}_1}}+\frac{1}{\mu_2}p^2\cara_{_{\mathcal{O}_2}} \\
\mathcal{T}_i(y,p^1,p^2)&=&-p^i_t-b(t)\left(a(x)p^i_x\right)_x+B \left(xp^i\right)_x + D_1 F\left(y,C\beta(x)y_x\right)p^i \\
&&- C\left( D_2 F(y,C\beta(x)y_x)\beta(x)p^i \right)_x - \alpha_i y\cara_{_{{\mathcal{O}}_{id}}}.
\end{eqnarray*}
   
Thus, $D\mathcal{T}(0,0,0)(\overline{y},\overline{p}^1,\overline{p}^2)=\lim\limits_{
   \lambda\rightarrow 0}\frac{\mathcal{T}(\lambda(\overline{y},\overline{p}^1,\overline{p}^2))-\mathcal{T}(0,0,0)}{\lambda}$. Forgeting the
   tilde in the notations, we get the linearized system, for $i=1,2$
\begin{equation}
   \label{eq:linearized_system}
    \begin{cases}
   			y_t-b(t)\left(a(x)y_x\right)_x-B xy_x + D_1 F\left(0,0\right)y + C D_2 F(0,0)\beta(x)y_x \\
            \quad ={h}\cara_{_{{\mathcal{O}}}}-\frac{1}{\mu_1}p^1\cara_{_{\mathcal{O}_1}}-\frac{1}{\mu_2}p^2\cara_{_{\mathcal{O}_2}}+H, & \ \ \ \text{in} \ \ \ {Q},\\
   		-p^i_t-b(t)\left(a(x)p^i_x\right)_x + B\left(xp^i\right)_x + D_1 F\left(0,0\right)p^i- C D_2 F(0,0) (\beta(x)p^i)_x \\
        \quad = \alpha_i y\cara_{_{{\mathcal{O}}_{id}}}+H_i,  \ \ i=1,2& \ \ \ \text{in} \ \ \ {Q},\\
   		y=0, \ \ \ \ \ p^1=0, \ \ \ \ \ p^2=0& \ \ \ \text{on} \ \ \ \Sigma,\\
   		y(0)=y_0, \ \ \ \ \ p^1(T)=0, \ \ \ \ \ \ p^2(T)=0& \ \ \ \text{in} \ \ \ \Omega,
   	\end{cases}
   \end{equation}
   with associated adjoint system, 
      \begin{equation}
      \label{eq:adjoint_optimality_system}
   	\begin{cases}
   		-\phi_t-b(t)\left(a(x)\phi_x\right)_x+B\left(x\phi\right)_x + D_1 F\left(0,0\right)\phi - C D_2 F(0,0)(\beta(x)p^i)_x\\
        \qquad= F + \alpha_1\psi^1\cara_{_{\mathcal{O}_{1d}}}+\alpha_2\psi^2\cara_{_{\mathcal{O}_{2d}}}, & \ \ \ \text{in} \ \ \ {Q},\\
   		\psi^i_t-b(t)\left(a(x)\psi^i_x\right)_x - B x\psi^i_x + D_1 F\left(0,0\right)\psi^i + C D_2 F(0,0) \beta(x)\psi^i_x\\
        \qquad= F_i-\frac{1}{\mu_i}\phi \cara_{_{\mathcal{O}_i}},  \ \ i=1,2& \ \ \ \text{in} \ \ \ {Q},\\
   		\phi=0, \ \ \ \ \ \psi^1=0, \ \ \ \ \ \psi^2=0& \ \ \ \text{on} \ \ \ \Sigma,\\
   		\phi(T)=\phi^T, \ \ \ \ \ \psi^1(0)=0, \ \ \ \ \ \ \psi^2(0)=0& \ \ \ \text{in} \ \ \ \Omega,
   	\end{cases}
   \end{equation}
   where $\phi^T\in L^2(\Omega)$.
   
   To simplify the system, we assume the condition \eqref{ec9}, i.e., after the coordinate transformation, 
   ${\mathcal{O}_{1d}}={\mathcal{O}_{2d}}={\mathcal{O}_{d}}$. 
   Introducing the new variable $\varrho=\alpha_1\psi^1+\alpha_2\psi^2$, the adjoint system becomes
    \begin{equation}
         \label{eq:adjoint_optimality_system2}
   	\begin{cases}
   		-\phi_t-b(t)\left(a(x)\phi_x\right)_x+B\left(x\phi\right)_x+ D_1 F\left(0,0 \right)\phi - C D_2 F(0,0) \left(\beta(x)\phi \right)_x \\
        \quad = \varrho\cara_{_{\mathcal{O}_{d}}}+G_0, & \ \ \ \text{in} \ \ \ {Q}\\
   		\varrho_t-b(t)\left(a(x)\varrho_x\right)_x-B x\varrho_x + D_1 F\left(0,0 \right)\varrho + C D_2 F(0,0)\beta(x)\varrho_x \\
        \quad = -\left( \frac{\alpha_1}{\mu_1}\cara_{_{\mathcal{O}_1}} +\frac{\alpha_2}{\mu_2}\cara_{_{\mathcal{O}_2}} \right)\phi +G,  \ & \ \ \ \text{in} \ \ \ {Q}\\
   		\phi=0, \ \ \ \ \ \varrho=0  & \ \ \ \text{in} \ \ \ \sum\\
   		\phi(T)=\phi^T, \ \ \ \ \ \varrho(0)=0, & \ \ \ \text{in} \ \ \ \Omega
   	\end{cases}
   \end{equation}
   where $\phi^T\in L^2(\Omega)$ and $G=\alpha_1 F_1+\alpha_2 F_2$.

\subsection{Carleman estimates}

Following \cite{DemarqueLimacoViana_deg_eq2018, DemarqueLimacoViana_deg_sys2020}, let $O'=(\alpha',\beta') \subset\subset O$ and let $\Psi : [0,1] \to \R$ be a $C^2$ function such that
$$
\Psi(x) = 
\begin{cases}
\int_0^x \frac{s}{a(s)} ds, \quad x \in [0,\alpha'), \\
-\int_{\beta'}^x \frac{s}{a(s)} ds, \quad x \in [\beta',1].
\end{cases}
$$
For $\lambda \geq \lambda_0$ define the functions
$$
\theta(t) = \frac{1}{(t(T-t))^4}, \qquad \eta(x) = e^{\lambda(|\Psi|_\infty + \Psi)}, \qquad \sigma(x,t) = \theta(t) \eta(x),
$$
$$
\varphi(x,t) = \theta(t) (e^{\lambda(|\Psi|_\infty + \Psi)}-e^{3\lambda|\Psi|_\infty}).
$$

We introduce the notation
$$
I(\varsigma) = \int_Q e^{2s\varphi}((s\lambda)\sigma b(t)^2 a \varsigma_x^2 + (s\lambda)^2 \sigma^2 b(t)^2\varsigma^2)dxdt.
$$

The following proposition was proved for a cylindrical domain in \cite{DemarqueLimacoViana_deg_sys2020} for an adjoint system of two retrograde equations. After making the substitution $t \mapsto T-t$ to our second equation in \eqref{eq:adjoint_optimality_system2}, we are in the case treated in \cite{DemarqueLimacoViana_deg_sys2020} and the conclusion remains true after coming back to the original variable.

For a non-cylindrical domain we have an analogous result \cite{GYL-Carleman-2025}. In fact, first we consider the linear equation:
   \begin{equation}\label{adjunto2_texto_art}
	\begin{cases}
		v_t+b(t)\left(a(x)v_x\right)_x =h(x,t), & \ \ \ \text{in} \ \ \ {Q},\\
		v=0, & \ \ \ \text{on} \ \ \ {\Sigma}, \\
        v(T) = v_T, &\ \ \ \text{in} \ \ \ (0,1),
	\end{cases}
\end{equation}
where $b$ is a continuous function in $[0,T]$ and we denote
$m := \min\limits_{t \in [0,T]} b(t)$ and $M :=\max\limits_{t \in [0,T]} b(t).$
Moreover, we suppose that $\frac{b'(t)}{b(t)} \leq B$, for some constant $B>0$.

\begin{propo}(\cite{GYL-Carleman-2025})
\label{prop_carleman_1}
	There exist $C>0$ and $\lambda_0,  s_0>0$ such that every solution $v$ of (\ref{adjunto2_texto_art}) satisfies, for all
	$s\geq s_0$ and $\lambda\geq \lambda_0$,
	\begin{equation}\label{ecjv34}
		\int_{0}^{T}\int_{0}^{1}e^{2s\varphi}\left(  (s\lambda)\sigma a b^2 v_x^2+(s\lambda)^2\sigma^2b^2 v^2 \right)\leq C\left(	\int_{0}^{T}\int_{0}^{1} e^{2s\varphi} |h|^2+(\lambda s)^3\int_{0}^{T}\int_{\omega}e^{2s\varphi} \sigma^3  v^2  \right)
	\end{equation}
\end{propo}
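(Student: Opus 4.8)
The plan is to adapt the classical argument for weakly degenerate parabolic operators, in the form carried out for cylindrical domains in \cite{DemarqueLimacoViana_deg_eq2018,DemarqueLimacoViana_deg_sys2020}, and to keep track of the extra terms generated by the time-dependent coefficient $b(t)$. Since $\varphi<0$ and $\sigma,\,|\varphi|\to\infty$ as $t\to 0^+$ and $t\to T^-$, the function $w:=e^{s\varphi}v$ and its relevant derivatives vanish at $t=0$ and $t=T$, so all the integrations by parts in $t$ below are legitimate. Conjugating the equation in \eqref{adjunto2_texto_art} and rearranging, one writes $P_1w+P_2w=g+Rw$, where $g:=e^{s\varphi}h$, the formally self-adjoint part is $P_1w:=b(t)\big(a(x)w_x\big)_x+s^2b(t)a(x)\varphi_x^2\,w$, the skew-adjoint part is $P_2w:=w_t-2s\,b(t)a(x)\varphi_x\,w_x$, and $Rw$ collects the remaining first- and zeroth-order terms ($-s\varphi_t\,w$, $-s\,b(t)(a\varphi_x)_x\,w$, and a multiple of $w$ chosen to balance $P_1$, $P_2$ and $Rw$), all to be absorbed at the end.

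Squaring and using $\|P_1w\|_{L^2(Q)}^2+\|P_2w\|_{L^2(Q)}^2+2\langle P_1w,P_2w\rangle_{L^2(Q)}=\|g+Rw\|_{L^2(Q)}^2$, the heart of the proof is the expansion of $\langle P_1w,P_2w\rangle_{L^2(Q)}$ by integration by parts in $x$ and in $t$. As in the non-degenerate case, the dominant surviving contributions are the positive quantities
$$
c\,s\lambda\int_Q \sigma\,b(t)^2\,a\,w_x^2\,dxdt\qquad\text{and}\qquad c\,(s\lambda)^2\int_Q \sigma^2\,b(t)^2\,w^2\,dxdt,
$$
arising from the interaction of $b(aw_x)_x$ with $-2sba\varphi_x w_x$ and of $s^2ba\varphi_x^2w$ with $w_t$, respectively; here one uses $\varphi_x=\lambda\sigma\Psi'$, the fact that $\Psi'(x)=x/a(x)$ near $x=0$ (so that $a\varphi_x$ behaves like $\lambda\sigma x$), the weak-degeneracy bound $xa'(x)\le Ka(x)$ with $K\in[0,1)$ to ensure the correct sign of the leading coefficient for $\lambda$ large, and the bounds $0<m\le b\le M$ to pass freely between $b$, $b^2$ and constants. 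Undoing $w=e^{s\varphi}v$ converts these into the left-hand side of \eqref{ecjv34}.

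Two points are specific to the present setting. The first is the \emph{degeneracy at $x=0$}: every boundary contribution produced at the left endpoint in the integrations by parts must be shown to vanish or to have the favourable sign. This is handled by the choice of the primitive $\Psi(x)=\int_0^x\frac{s}{a(s)}\,ds$ near $0$ (finite, since $s/a(s)$ is integrable there), by the identities $a\varphi_x=\lambda\sigma x$ and $(a\varphi_x)_x=\lambda\sigma(1+\lambda x\Psi')$ near $0$, by $a(0)=0$, by the boundedness of $x/\sqrt{a(x)}$ by $1/\sqrt{a(1)}$ — a consequence of $xa'\le Ka$ that turns degenerate-weighted terms such as $\int_Q\frac{x^2}{a}\sigma^2 w^2$ into ordinary ones $\int_Q\sigma^2 w^2$ — and, where boundary integrals at $0$ must be reabsorbed, by a Hardy--Poincaré inequality on $H^1_a(0,1)$, valid precisely because $K\in[0,1)$. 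At $x=1$, where $v=0$ and $a(1)>0$, the situation is the classical non-degenerate one. The second point is the \emph{non-autonomous coefficient}: differentiating the $b$-weighted quantities in the $t$-integrations by parts produces terms carrying a factor $b'(t)$, typically of the form $s^2\int_Q b'(t)\,a\,\varphi_x^2\,w^2$; writing $b'=\frac{b'}{b}\,b$ and invoking $b'(t)/b(t)\le B$ bounds these by $B$ times a quantity already present on the left-hand side, absorbable for $\lambda$ (or $s$) large, while the elementary estimates $|\theta'|\le C\,\theta^{5/4}$ and $\theta\ge c_T>0$ on compact $t$-subintervals of $(0,T)$ take care of the analogous $\theta$-derivatives.

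Finally, to return from $w$ to $v$ and to turn the global integral $\int_Q$ on the left-hand side into the local one on $\omega$, one runs the standard Carleman-to-observability reduction: choose $O'\subset\subset O$ and a cutoff $\zeta\in C^\infty_c(O)$ with $\zeta\equiv 1$ on $O'$, apply the estimate obtained so far to $\zeta v$, and absorb the commutator terms — supported in $O\setminus O'$ and carrying strictly lower powers of $s\lambda$ — together with all contributions of $Rw$ into the left-hand side by enlarging $\lambda_0$ and $s_0$. Choosing $\lambda$ and then $s$ large enough leaves exactly the observation term $(\lambda s)^3\int_0^T\int_\omega e^{2s\varphi}\sigma^3 v^2$, which is \eqref{ecjv34}. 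I expect the main obstacle to be the careful bookkeeping of the boundary terms at the degenerate endpoint $x=0$ and the verification that each either vanishes or is dominated by the positive leading terms — a step now entangled with the non-autonomous terms, which must be absorbed via $b'/b\le B$ and the boundedness of $b$ rather than simply discarded.
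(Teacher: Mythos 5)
This paper does not actually prove Proposition~\ref{prop_carleman_1}: it is imported verbatim from the companion work \cite{GYL-Carleman-2025} (``The proof of this proposition is the main result of the authors in \cite{GYL-Carleman-2025}''), so there is no in-paper argument to compare yours against. That said, your outline is the standard Fursikov--Imanuvilov weighted-identity machinery in the degenerate form of \cite{Alabau_cannarsa_fragnelli-06} and \cite{DemarqueLimacoViana_deg_eq2018}, with the non-autonomous coefficient handled through $0<m\le b\le M$ and $b'/b\le B$, which is almost certainly the route the cited paper takes; the decomposition $P_1w+P_2w=g+Rw$, the identity $a\varphi_x=\lambda\sigma x$ near the degeneracy, the vanishing of the left-endpoint boundary terms, and the Hardy--Poincar\'e absorption are all the right ingredients.

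Two places where your sketch is loose enough that I would not yet call it a proof. First, the attribution of the dominant zeroth-order term: the interaction of $s^2ba\varphi_x^2w$ with $w_t$ produces, after integration by parts in $t$, terms carrying $b'$ and $\theta'$ that must be \emph{absorbed}, not the leading positive contribution; the $(s\lambda)^2\sigma^2b^2w^2$ term on the left of \eqref{ecjv34} has to be extracted from the cubic interaction $\langle s^2ba\varphi_x^2w,\,-2sba\varphi_xw_x\rangle$, whose natural output carries the degenerate weight $x^2/a$ vanishing at $x=0$; passing from $\int\sigma^3\frac{x^2}{a}w^2$ to a uniform $\int\sigma^2w^2$ is exactly where the Hardy--Poincar\'e inequality and the condition $K\in[0,1)$ enter, and this step deserves to be made explicit rather than folded into ``bookkeeping.'' Second, the localization: one does not apply the global estimate to $\zeta v$; rather, the sign of the leading coefficients fails only on $O'=(\alpha',\beta')\subset\subset O$ (by the very construction of $\Psi$), those interior contributions are moved to the right-hand side, and a separate Caccioppoli-type cutoff argument converts the local gradient observation into the $(\lambda s)^3\int_0^T\int_\omega e^{2s\varphi}\sigma^3v^2$ term. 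With those two steps filled in, the argument would be complete; as written it is a correct plan whose hardest verifications are deferred.
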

The proof of this proposition is the main result of the authors in \cite{GYL-Carleman-2025}. Also from \cite{GYL-Carleman-2025} we need the following Carleman estimate for the system \eqref{eq:adjoint_optimality_system2} which allows to get the controllability with control acting only in one of the equations. 

\begin{propo}(\cite{GYL-Carleman-2025}) 
There exist positive constants $C$, $\lambda_0$ and $s_0$ such that, for any $s \geq s_0$, $\lambda \geq \lambda_0$ and any $\phi^T \in L^2(Q)$, the corresponding solution $(\phi,\varrho)$ of \eqref{eq:adjoint_optimality_system2} satisfies
$$
I(\phi) + I(\varrho) \leq C \left( \int_Q e^{2s\varphi} s^4 \lambda^4 \sigma^4 (|G_0|^2+|\tilde{G}|^2)dxdt + \int_{O\times(0,T)} e^{2s\varphi} s^8 \lambda^8 \sigma^8 \phi^2dxdt \right).
$$
\end{propo}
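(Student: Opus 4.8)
Here is the route I would follow. Write $G:=\alpha_1F_1+\alpha_2F_2$; this is the source denoted $\tilde G$ in the statement (up to the innocuous composition with $t\mapsto T-t$ discussed below).

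\medskip
\noindent\textbf{Step 1: reduce to the scalar estimate on each equation.} The plan is to apply Proposition~\ref{prop_carleman_1} to the two equations of \eqref{eq:adjoint_optimality_system2} separately. Multiplying the $\phi$-equation by $-1$ puts it in the form $\phi_t+b(t)(a(x)\phi_x)_x=h_\phi$ with terminal datum $\phi(T)=\phi^T$, where
$$
h_\phi:=-\varrho\,1_{O_d}-G_0+B(x\phi)_x+D_1F(0,0)\phi-C\,D_2F(0,0)(\beta(x)\phi)_x
$$
gathers the source together with the transport and zeroth-order terms. The $\varrho$-equation is forward in time with $\varrho(0)=0$; after the change of variable $t\mapsto T-t$ — which leaves $\theta$, $\sigma$, $\varphi$ and the functional $I$ invariant and preserves the bound $b'/b\le B$ — it takes the same form, with zero terminal datum and
$$
h_\varrho:=\Big(\tfrac{\alpha_1}{\mu_1}1_{O_1}+\tfrac{\alpha_2}{\mu_2}1_{O_2}\Big)\phi-G+Bx\varrho_x-D_1F(0,0)\varrho-C\,D_2F(0,0)\beta(x)\varrho_x .
$$
Applying Proposition~\ref{prop_carleman_1} to each — choosing the observation set $\omega$ so that $\overline\omega\subset O_d\cap O$, which is possible precisely because $\hat O_{i,d}\cap\hat O\neq\emptyset$ — gives $I(\phi)\le C\big(\int_Q e^{2s\varphi}|h_\phi|^2 + (s\lambda)^3\int_0^T\!\int_\omega e^{2s\varphi}\sigma^3\phi^2\big)$ and the analogue for $I(\varrho)$.

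\medskip
\noindent\textbf{Step 2: absorb lower-order and ``wrong-region'' coupling terms.} Since $b$ is continuous and positive on $[0,T]$ and $\sigma\ge\sigma_\ast>0$, one has $\int_Q e^{2s\varphi}\phi^2\le C(s\lambda)^{-2}I(\phi)$, which disposes of all zeroth-order contributions; the first-order ones are handled by the structural hypotheses, namely $x^2\le a(x)/a(1)$ and $\beta(x)^2\le a(x)$ give $\int_Q e^{2s\varphi}(|x\phi_x|^2+|\beta\phi_x|^2)\le C(s\lambda)^{-1}\int_Q e^{2s\varphi}(s\lambda)\sigma b^2a\phi_x^2\le C(s\lambda)^{-1}I(\phi)$, and $\beta_x\le M$ makes $\beta_x\phi$ zeroth-order (the same for $\varrho$). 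The identical smallness argument handles the cross terms: $\int_Q e^{2s\varphi}\varrho^2 1_{O_d}\le C(s\lambda)^{-2}I(\varrho)$ and $\int_Q e^{2s\varphi}\phi^2(1_{O_1}+1_{O_2})\le C(s\lambda)^{-2}I(\phi)$. Adding the two Carleman inequalities and taking $s_0\lambda_0$ large enough to move all these pieces to the left, I obtain
$$
I(\phi)+I(\varrho)\le C\Big(\int_Q e^{2s\varphi}(|G_0|^2+|G|^2)\,dxdt+(s\lambda)^3\!\!\int_0^T\!\!\int_\omega e^{2s\varphi}\sigma^3(\phi^2+\varrho^2)\,dxdt\Big).
$$
The source integral is trivially bounded by $\int_Q e^{2s\varphi}s^4\lambda^4\sigma^4(|G_0|^2+|G|^2)$, and since $\overline\omega\subset O$ the local $\phi$-integral by $\int_{O\times(0,T)}e^{2s\varphi}s^8\lambda^8\sigma^8\phi^2$; the only thing left to kill is the local integral of $\varrho$.

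\medskip
\noindent\textbf{Step 3: eliminate the local observation of $\varrho$ (the crux).} Because $\overline\omega\subset O_d\cap O$, on a neighbourhood of $\omega$ the $\phi$-equation gives $\varrho=-\phi_t-b(a\phi_x)_x+B(x\phi)_x+D_1F(0,0)\phi-C D_2F(0,0)(\beta\phi)_x-G_0$. I fix $\xi\in C_c^\infty(O_d\cap O)$, $0\le\xi\le1$, $\xi\equiv1$ on $\omega$, set $g:=\xi e^{2s\varphi}\sigma^3$, bound $(s\lambda)^3\int_0^T\!\int_\omega e^{2s\varphi}\sigma^3\varrho^2\le(s\lambda)^3\int_Q g\,\varrho^2$, substitute the above for one factor $\varrho$, and integrate the \emph{whole} principal part $-\phi_t-b(a\phi_x)_x$ by parts onto $g\varrho$ at once — there are no boundary terms, since $\operatorname{supp}\xi\subset\subset(0,1)$ and $e^{2s\varphi}$ vanishes to infinite order at $t=0,T$ — using
$$
\partial_t(g\varrho)-b\big(a(g\varrho)_x\big)_x=g\big(\varrho_t-b(a\varrho_x)_x\big)+\big(g_t-b(ag_x)_x\big)\varrho-2ba\,g_x\varrho_x,
$$
and replacing $\varrho_t-b(a\varrho_x)_x$ via the $\varrho$-equation (whose coupling term vanishes on $\operatorname{supp}\xi$, as $O\cap O_i=\emptyset$). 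After one further integration by parts on the terms coming from $B(x\phi)_x$ and $(\beta\phi)_x$, every resulting term is $\phi$ (no derivative) times $\varrho$, $\varrho_x$, $G$ or $G_0$, multiplied by $e^{2s\varphi}$ times $s,\lambda,\sigma$-factors of combined order at most $(s\lambda)^5\sigma^5$ (the worst comes from $(ag_x)_x$). A Young's inequality splitting the weight so that the $\varrho$- and $\varrho_x$-factors carry $e^{2s\varphi}(s\lambda)^2\sigma^2$ and $e^{2s\varphi}(s\lambda)\sigma a$ respectively pushes them into $\varepsilon I(\varrho)$, leaves the $G_0,G$-factors with weight $\le e^{2s\varphi}s^4\lambda^4\sigma^4$, and leaves the $\phi$-factor with weight $\le e^{2s\varphi}s^8\lambda^8\sigma^8$ on $\operatorname{supp}\xi\subset O$ — exactly the exponents in the statement. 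Choosing $\varepsilon$ small, $s_0,\lambda_0$ large, and absorbing the $\varepsilon(I(\phi)+I(\varrho))$ contributions on the left yields the claimed inequality.

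\medskip
\noindent\textbf{Expected main obstacle.} Step~3 is the delicate point: it is where the Stackelberg--Nash coupling genuinely bites (the followers act in $O_1\cup O_2$, disjoint from the leader's domain $O$, so the local information on $\varrho$ must be transferred to local information on $\phi$ in $O$), and where one must carry out the integrations by parts and the weight bookkeeping inside the degenerate weighted framework ($a$ vanishing at $x=0$, spaces $H^1_a,H^2_a$) rather than the classical one; the large exponents $s^8\lambda^8\sigma^8$ and $s^4\lambda^4\sigma^4$ in the statement are dictated precisely by the losses incurred in this transfer.
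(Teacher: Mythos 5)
The paper does not actually prove this proposition: it is imported verbatim from \cite{GYL-Carleman-2025} (the text only notes that the cylindrical analogue is in \cite{DemarqueLimacoViana_deg_sys2020}), so there is no in-paper proof to compare against. Your outline is the standard argument used in those references --- scalar Carleman on each equation, absorption of the first- and zeroth-order terms via $x^2\le a(x)/a(1)$, $\beta^2\le a$ and largeness of $s\lambda$, then elimination of the local $\varrho$-term by substituting the $\phi$-equation on a cutoff supported in $O_d\cap O$ and integrating the principal part by parts onto $\xi e^{2s\varphi}\sigma^3\varrho$ --- and your bookkeeping of the weights (the $(s\lambda)^5\sigma^5$ worst case from $(ag_x)_x$, hence $s^8\lambda^8\sigma^8$ after Young) is consistent with the exponents in the statement. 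I see no gap in the outline beyond the fact that it is a sketch resting on Proposition \ref{prop_carleman_1}, which is legitimately taken as given.
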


In order to get the global null controllability of the linearized system, we need a Carleman inequality with weights which do not vanish at $t=0$. Consider the function $m \in C^\infty([0,T])$ satisfying $m(0)>0$,
\begin{equation}
\label{eq:def_m}
m(t) \geq t^4(T-t)^4, \quad t \in (0,T/2], \qquad\qquad m(t) = t^4(T-t)^4, \quad t \in [T/2,T],    
\end{equation}
and define
$$
\tau(t) = \frac{1}{m(t)}, \qquad \zeta(x,t) = \tau(t) \eta(x), \qquad A(x,t)  = \tau(t) (e^{\lambda(|\Psi|_\infty + \Psi)}-e^{3\lambda|\Psi|_\infty}).
$$

Define
$$
\Gamma_0(\phi,\varrho) = \int_Q e^{2s A}((s\lambda) \zeta b^2 a (|\phi_x|^2 + |\varrho_x|^2) + (s\lambda)^2 {\zeta}^2 b^2 (|\phi|^2 + |\varrho|^2))dxdt,
$$
$$
\Gamma(\phi,\varrho,\tilde \varrho) = \int_Q e^{2s A}((s\lambda) \zeta b^2 a (|\phi_x|^2+|\varrho_x|^2+|\tilde \varrho_x|^2) + (s\lambda)^2 {\zeta}^2 b^2 (|\phi|^2 + |\varrho|^2 + |\tilde \varrho|^2))dxdt.
$$

\begin{propo} (\cite{GYL-Carleman-2025}) 
There exist positive constants $C$, $\lambda_0$ and $s_0$ such that, for any $s \geq s_0$, $\lambda \geq \lambda_0$ and any $\phi^T\in L^2(Q)$, the corresponding solution $(\phi,\varrho)$ of \eqref{eq:adjoint_optimality_system2} satisfies
$$
\Gamma_0(\phi,\varrho) \leq C \left( \int_Q e^{2s A} s^4 \lambda^4 \zeta^4 (|G_0|^2+|\tilde{G}|^2)dxdt + \int_{O\times(0,T)} e^{2s A} s^8 \lambda^8 \zeta^8 |\phi|^2 dxdt\right).
$$
\end{propo}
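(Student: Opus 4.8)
The plan is to deduce this estimate from the previous proposition (the one bounding $I(\phi)+I(\varrho)$) by the classical device of trading weights that degenerate at both ends of $(0,T)$ for weights that stay bounded near $t=0$, exploiting the parabolic smoothing of \eqref{eq:adjoint_optimality_system2}. I would first split $\Gamma_0(\phi,\varrho)=\int_0^{T/2}\!\int_0^1(\cdots)\,dxdt+\int_{T/2}^{T}\!\int_0^1(\cdots)\,dxdt$. On $[T/2,T]$ one has $m(t)=t^4(T-t)^4$, hence $\tau=\theta$, $\zeta=\sigma$ and $A=\varphi$ there, so the second integral coincides with the restriction of $I(\phi)+I(\varrho)$ to $[T/2,T]$ and is therefore bounded, by the previous proposition, by $C\big(\int_Q e^{2s\varphi}s^4\lambda^4\sigma^4(|G_0|^2+|\tilde G|^2)\,dxdt+\int_{O\times(0,T)}e^{2s\varphi}s^8\lambda^8\sigma^8|\phi|^2\,dxdt\big)$. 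I would then compare weights: $A-\varphi=(\tau-\theta)\big(e^{\lambda(|\Psi|_\infty+\Psi)}-e^{3\lambda|\Psi|_\infty}\big)\geq 0$ (the first factor is $\leq 0$ since $m\geq t^4(T-t)^4$, the second $<0$), while $\zeta\leq\sigma$; and since near $t=0^+$ the gain $e^{2s(\varphi-A)}$ decays faster than the loss $(\theta/\tau)^j$ grows, one gets $e^{2s\varphi}\sigma^j\leq C(s,\lambda)\,e^{2sA}\zeta^j$ on $Q$ for $j=4,8$. Thus this first term is controlled by the right-hand side of the statement.

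\noindent Next I would treat the contribution of $[0,T/2]$. Since $m(0)>0$, on that interval the functions $\tau,\zeta,A,e^{2sA},b$ are bounded above and below by positive constants (with $s,\lambda$ now fixed), so this part is at most $C\big(\|\phi\|_{L^2(0,T/2;H^1_a(\Omega))}^2+\|\varrho\|_{L^2(0,T/2;H^1_a(\Omega))}^2\big)$. I would absorb it into $N:=\|\phi\|_{L^2(T/2,3T/4;L^2(\Omega))}^2+\|\varrho\|_{L^2(T/2,3T/4;L^2(\Omega))}^2$ and source norms; note $N\leq C\,(I(\phi)+I(\varrho))$ because $e^{2s\varphi}(s\lambda)^2\sigma^2 b^2$ is bounded below on $[T/2,3T/4]$, so $N$ is already bounded by the right-hand side. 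For the absorption I would take $\chi\in C^\infty([0,3T/4])$ with $\chi\equiv 1$ on $[0,5T/8]$ and $\chi\equiv 0$ near $3T/4$, and set $\tilde\phi=\chi\phi$, $\tilde\varrho=\chi\varrho$; since $\chi=\chi(t)$, $\tilde\phi$ solves the first equation of \eqref{eq:adjoint_optimality_system2} on $\Omega\times(0,3T/4)$ with right-hand side $\chi\varrho\cara_{_{\mathcal{O}_{d}}}+\chi G_0-\chi'\phi$ and $\tilde\phi(\cdot,3T/4)=0$, and $\tilde\varrho$ solves the second with right-hand side $-\chi\big(\tfrac{\alpha_1}{\mu_1}\cara_{_{\mathcal{O}_1}}+\tfrac{\alpha_2}{\mu_2}\cara_{_{\mathcal{O}_2}}\big)\phi+\chi G+\chi'\varrho$ and $\tilde\varrho(\cdot,0)=0$. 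Standard $L^2$--energy estimates for these weakly degenerate equations — multiply by $\tilde\phi$, respectively $\tilde\varrho$; integrate by parts (the boundary term at $x=0$ vanishes in the weakly degenerate class); absorb the lower-order and first-order terms by Young's inequality using $\beta^2\leq a$ and $\beta_x\leq M$; apply Grönwall, the $\chi'$--terms being supported in $[5T/8,3T/4]$ — should yield
\begin{equation*}
\|\tilde\phi\|_{L^2(0,3T/4;H^1_a(\Omega))}^2\leq C\big(\|\tilde\varrho\|_{L^2(0,3T/4;L^2(\Omega))}^2+\|G_0\|_{L^2(\Omega\times(0,3T/4))}^2+N\big),
\end{equation*}
\begin{equation*}
\|\tilde\varrho\|_{L^2(0,3T/4;H^1_a(\Omega))}^2\leq C\big(\big(\tfrac{\alpha_1}{\mu_1}+\tfrac{\alpha_2}{\mu_2}\big)^2\|\tilde\phi\|_{L^2(0,3T/4;L^2(\Omega))}^2+\|G\|_{L^2(\Omega\times(0,3T/4))}^2+N\big).
\end{equation*}
Substituting the second into the first produces a coefficient of $\|\tilde\phi\|_{L^2(0,3T/4;L^2(\Omega))}^2$ proportional to $\big(\tfrac{\alpha_1}{\mu_1}+\tfrac{\alpha_2}{\mu_2}\big)^2$, which is $<1$ once $\mu_1,\mu_2$ are large (the standing assumption of the main results), so it can be absorbed, giving $\|\tilde\phi\|_{L^2(0,3T/4;H^1_a(\Omega))}^2+\|\tilde\varrho\|_{L^2(0,3T/4;H^1_a(\Omega))}^2\leq C\big(\|G_0\|_{L^2(\Omega\times(0,3T/4))}^2+\|G\|_{L^2(\Omega\times(0,3T/4))}^2+N\big)$. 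Since $\chi\equiv1$ on $[0,T/2]$, the left-hand side dominates $\|\phi\|_{L^2(0,T/2;H^1_a(\Omega))}^2+\|\varrho\|_{L^2(0,T/2;H^1_a(\Omega))}^2$; and because $e^{2sA}s^4\lambda^4\zeta^4$ is bounded below on $[0,3T/4]$, the source norms there are bounded by $\int_Q e^{2sA}s^4\lambda^4\zeta^4(|G_0|^2+|\tilde G|^2)$, i.e.\ by the right-hand side of the statement, and so is $N$. Adding the two contributions finishes the proof.

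\noindent The step I expect to be the main obstacle is the energy estimate for the forward--backward coupled system \eqref{eq:adjoint_optimality_system2}: one must keep the final datum $\phi^T$ off the right-hand side (this is the role of the time cut-off $\chi$), and then close the Grönwall loop created by the two-way coupling between $\phi$ and $\varrho$, which forces the absorption step and is exactly where the largeness of $\mu_1,\mu_2$ is used. By contrast, the degeneracy at $x=0$ and the first-order drift $C\,D_2F(0,0)\beta(x)\phi_x$ are routine, handled by the structural assumptions $\beta^2\leq a$, $(\beta^2)_x\leq 2a_x$ and the weakly degenerate functional framework of Section~\ref{Sec:null_linearized_system}.
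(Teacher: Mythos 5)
The paper itself contains no proof of this proposition: it is imported verbatim from the companion work \cite{GYL-Carleman-2025}, so there is no internal argument to compare yours against. That said, your strategy is the classical one for passing from the weight $\varphi$ (vanishing at both $t=0$ and $t=T$) to the weight $A$ (bounded near $t=0$): split at $t=T/2$, observe that $\tau=\theta$, $\zeta=\sigma$, $A=\varphi$ on $[T/2,T]$ so that the previous proposition controls that half directly, check the pointwise comparison $e^{2s\varphi}\sigma^j\leq C e^{2sA}\zeta^j$ (which does hold uniformly in $s\geq s_0$, $\lambda\geq\lambda_0$, since $A-\varphi=(\theta-\tau)\bigl(e^{3\lambda|\Psi|_\infty}-e^{\lambda(|\Psi|_\infty+\Psi)}\bigr)\geq 0$ grows linearly in $\theta$ near $t=0$ while $(\sigma/\zeta)^j=(\theta/\tau)^j$ grows only polynomially), and close the interval $[0,T/2]$ with cut-off energy estimates whose right-hand sides are re-absorbed through the observation term of the Carleman inequality on $[T/2,3T/4]$. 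This is almost certainly the route taken in \cite{GYL-Carleman-2025}, and the degenerate/first-order terms are indeed handled as you say by $\beta^2\leq a$.

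The genuine gap is exactly where you suspect it: closing the energy-estimate loop for the forward–backward coupled system \eqref{eq:adjoint_optimality_system2} on $[0,3T/4]$. Your two inequalities feed into each other with a net coefficient proportional to $\bigl(\tfrac{\alpha_1}{\mu_1}+\tfrac{\alpha_2}{\mu_2}\bigr)^2$ in front of $\|\tilde\phi\|^2_{L^2}$, and since $\tilde\phi$ carries data at the right endpoint while $\tilde\varrho$ carries data at the left endpoint, no Gr\"onwall (nor a subdivision of $[0,3T/4]$ into short intervals, because the Gr\"onwall constants compound exponentially across subintervals) removes the need for that coefficient to be small. So your argument proves the proposition only under the additional hypothesis that $\mu_1,\mu_2$ are sufficiently large. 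That hypothesis appears in Proposition \ref{thm:nash_equilibrium} but not in the statement you are proving, nor in Theorem \ref{thm:local_null_controllability}; note also that in the preceding Carleman estimate for $I(\phi)+I(\varrho)$ the analogous coupling terms are absorbed by taking $s$ and $\lambda$ large (possible there because they are zeroth-order and the weights are bounded below), an option unavailable in an unweighted energy estimate. You should either add the largeness assumption explicitly or find a substitute for the absorption step; as written, the proof does not establish the statement as stated. A secondary, more cosmetic point: the constants you produce in the $[0,3T/4]$ part (via $\sup$ and $\inf$ of $e^{2sA}$ and via the lower bound of $e^{2s\varphi}(s\lambda)^2\sigma^2$ used to define $N$) depend exponentially on $s$, so the single constant $C$ valid for all $s\geq s_0$, $\lambda\geq\lambda_0$ promised by the proposition is not actually delivered; this is harmless for the rest of the paper, where $s$ and $\lambda$ are ultimately fixed, but should be acknowledged.
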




Since $\varrho = \alpha_1 \psi^1 + \alpha_2 \psi^2$ and $\tilde{G}=\alpha_{1}F_{1}+\alpha_{2}F_{2}$ we get directly the following Carleman inequality for the original adjoint system \eqref{eq:adjoint_optimality_system}.

\begin{propo}
\label{prop:carleman}
There exist positive constants $C$, $\lambda_0$ and $s_0$ such that, for any $s \geq s_0$, $\lambda \geq \lambda_0$ and any $\phi^T \in L^2(Q)$, the corresponding solution $(\phi,\psi^1,\psi^2)$ of \eqref{eq:adjoint_optimality_system} satisfies
\begin{equation}
\label{Carleman for eq:adjoint_optimality_system}
\Gamma(\phi,\psi^1,\psi^2) \leq C \left( \int_Q e^{2s A} s^4 \lambda^4 \zeta^4 (|F|^2+|F_1|^2+|F_2|^2)dxdt + \int_{O\times(0,T)} e^{2s A} s^8 \lambda^8 \zeta^8 |\phi|^2 dxdt\right).
\end{equation}
\end{propo}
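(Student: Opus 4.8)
The plan is to derive \eqref{Carleman for eq:adjoint_optimality_system} from the preceding proposition (the $\Gamma_0$-estimate for the cascade \eqref{eq:adjoint_optimality_system2}) by undoing the change of unknown $\varrho=\alpha_1\psi^1+\alpha_2\psi^2$. First, under hypothesis \eqref{ec9} the pair $(\phi,\varrho)$ built from a solution $(\phi,\psi^1,\psi^2)$ of \eqref{eq:adjoint_optimality_system} solves exactly \eqref{eq:adjoint_optimality_system2} with $G_0=F$ and $\tilde G=\alpha_1F_1+\alpha_2F_2$. The preceding proposition then gives $\Gamma_0(\phi,\varrho)\le C\big(\int_Q e^{2sA}s^4\lambda^4\zeta^4(|F|^2+|\tilde G|^2)\,dxdt+\int_{O\times(0,T)}e^{2sA}s^8\lambda^8\zeta^8|\phi|^2\,dxdt\big)$, and since $|\tilde G|^2\le 2\max(\alpha_1,\alpha_2)^2(|F_1|^2+|F_2|^2)$ the right-hand side is bounded by that of \eqref{Carleman for eq:adjoint_optimality_system}. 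In particular the $\phi$-part and the $\varrho$-part of $\Gamma(\phi,\psi^1,\psi^2)$ are already under control.

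It remains to bound $\int_Q e^{2sA}\big((s\lambda)\zeta b^2 a|\psi^i_x|^2+(s\lambda)^2\zeta^2 b^2|\psi^i|^2\big)\,dxdt$ for $i=1,2$ separately. Each $\psi^i$ solves a forward degenerate parabolic equation with null initial datum and source $F_i-\mu_i^{-1}\phi\cara_{\mathcal{O}_i}$; after the change $t\mapsto T-t$ one is in the setting of Proposition \ref{prop_carleman_1}, the lower-order terms $-Bx\psi^i_x$, $D_1F(0,0)\psi^i$ and $CD_2F(0,0)\beta(x)\psi^i_x$ being absorbed into the left-hand side for $s,\lambda$ large (using $\beta^2\le a$ and the boundedness of $x/\sqrt a$). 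This yields a Carleman estimate for $\psi^i$ whose source splits into a weighted norm of $F_i$, handled as above; a weighted norm of $\phi$ over $\mathcal{O}_i$, absorbed by the $\phi$-part, up to powers of $s\lambda$; and a local term $\int_{\omega\times(0,T)}e^{2s\varphi}\sigma^3|\psi^i|^2$ with $\omega\subset\subset\mathcal{O}_d\cap\mathcal{O}$ (nonempty by the hypothesis $\hat O_{i,d}\cap\hat O\ne\emptyset$ of Theorem \ref{thm:local_null_controllability}). Since on $\mathcal{O}_d$ the first equation of \eqref{eq:adjoint_optimality_system2} expresses $\varrho$ through $\phi$ and its derivatives, the identity $\psi^i=\alpha_i^{-1}(\varrho-\alpha_j\psi^j)$ allows one to trade this local term for interior parabolic estimates of $\phi$ on a slightly larger subset of $\mathcal{O}$, which are in turn dominated by the $\phi$-observation on the right-hand side of \eqref{Carleman for eq:adjoint_optimality_system} together with the bounds of the first step, via Caccioppoli-type inequalities; having estimated $\psi^1$ in this way, $\psi^2=\alpha_2^{-1}(\varrho-\alpha_1\psi^1)$ follows by combining with the $\varrho$-estimate.

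Finally one reconciles the weights: on $[T/2,T]$ one has $\varphi=A$ and $\sigma=\zeta$, so the estimates above are already of the required form there; on $[0,T/2]$ the good weights $A,\zeta$ are bounded above and below by positive constants, hence the corresponding pieces of $\Gamma(\phi,\psi^1,\psi^2)$ are controlled by plain energy estimates for the forward $\psi^i$-equations with null initial data, their source contributions being dominated by the weights $e^{2sA}s^4\lambda^4\zeta^4$ which are bounded below on $[0,T/2]$. Summing over $i$ and over the two time regimes gives \eqref{Carleman for eq:adjoint_optimality_system}. The main obstacle is precisely the second step: upgrading the cascade estimate for $(\phi,\varrho)$ to one for the full triple by eliminating the local observation terms of the individual followers' adjoint states, which forces one to exploit the coupling on $\mathcal{O}_d\cap\mathcal{O}$, interior regularity for $\phi$, and a careful matching of the weight powers $\sigma^k$ against $\zeta^k$, along the lines of \cite{GYL-Carleman-2025}; the remaining manipulations are routine for $s$ and $\lambda$ sufficiently large.
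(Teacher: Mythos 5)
Your first step coincides with the paper's: Proposition \ref{prop:carleman} is obtained from the $\Gamma_0$-estimate for the reduced system \eqref{eq:adjoint_optimality_system2} via $\varrho=\alpha_1\psi^1+\alpha_2\psi^2$ and $\tilde{G}=\alpha_1F_1+\alpha_2F_2$; the paper in fact presents the whole proposition as following ``directly'' from that cited result of \cite{GYL-Carleman-2025}, with no further argument. Where you diverge is in how you recover the individual states $\psi^1,\psi^2$ from the combination $\varrho$, and this is where your proposal has a genuine gap. You apply the single-equation Carleman estimate (Proposition \ref{prop_carleman_1}) to each $\psi^i$, which introduces a local observation term $(s\lambda)^3\int_{\omega\times(0,T)}e^{2s\varphi}\sigma^3|\psi^i|^2$, and you then try to remove it through the identity $\psi^i=\alpha_i^{-1}(\varrho-\alpha_j\psi^j)$. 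This is circular: the bound for $\psi^1$ presupposes a local bound on $\psi^2$ and vice versa, and you supply no smallness or absorption mechanism to break the loop. Moreover, even the $\varrho$-part of the traded term is not controlled: the local term carries the weight $(s\lambda)^3\sigma^3e^{2s\varphi}$, whereas $\Gamma_0$ only provides $(s\lambda)^2\zeta^2e^{2sA}$; on $[T/2,T]$, where $\varphi=A$ and $\sigma=\zeta$, the discrepancy is a factor $(s\lambda)\zeta$ that blows up as $t\to T$, so the traded term cannot be dominated by the right-hand side of \eqref{Carleman for eq:adjoint_optimality_system}.

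The detour is also unnecessary. In \eqref{eq:adjoint_optimality_system} the equations for $\psi^1$ and $\psi^2$ are decoupled from each other: each is a forward degenerate parabolic equation with zero initial datum whose only sources are $F_i$ and $-\mu_i^{-1}\phi\,1_{O_i}$, and $\phi$ is already controlled globally on $Q$ by the $\Gamma_0$-estimate. A solution determined entirely by already-controlled data requires no observation term at all; weighted energy estimates of the type used in the proof of Proposition \ref{addicional_estimates_case_linear} (exploiting that $A_t<0$ near $t=T$ while the weights are bounded near $t=0$, where $\psi^i$ vanishes) yield the $\psi^i$-contributions to $\Gamma$ without any local term. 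This is the mechanism that makes the paper's one-line deduction legitimate, up to the bookkeeping of the $x$-dependent weights on the source terms, which the paper delegates to \cite{GYL-Carleman-2025}. As written, your argument does not close.
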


As a corollary we get the observability inequality.
\begin{coro} 
\label{cor:observability}
There exist positive constants $C$, $\lambda_0$ and $s_0$ such that, for any $s \geq s_0$, $\lambda \geq \lambda_0$ and any $\phi^T\in L^2(Q)$, the corresponding solution $(\phi,\varrho)$ of \eqref{eq:adjoint_optimality_system2} with $F_0 = F_1 = 0$, satisfies
\begin{equation}\label{Observability}
\|\phi(0)\|^2_{L^2(0,1)} + \|\varrho(T)\|^2_{L^2(0,1)} \leq C \int_{O\times(0,T)} e^{2s A} s^8 \lambda^8 \zeta^8 |\phi|^2dxdt.
\end{equation}
\end{coro}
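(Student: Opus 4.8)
\emph{Proof strategy.} I would obtain the observability inequality \eqref{Observability} from the Carleman estimate for system \eqref{eq:adjoint_optimality_system2} (the $\Gamma_0$-estimate just stated, with vanishing source terms), combined with classical energy/dissipation estimates for the two degenerate parabolic equations of that system. Write $\mathrm{Obs}:=\int_{O\times(0,T)}e^{2sA}s^8\lambda^8\zeta^8|\phi|^2\,dx\,dt$ for the right-hand side of \eqref{Observability}. The first step is to localise the Carleman estimate near $t=0$, using that $A$ was built so as \emph{not} to degenerate there: since $m\in C^\infty([0,T])$, $m(0)>0$ and $m(t)\ge t^4(T-t)^4>0$ on $(0,T/2]$, the function $m$ is bounded above and below by positive constants on $[0,T/2]$, hence so are $\tau=1/m$, $\zeta=\tau\eta$ (recall $\eta\ge 1$) and $e^{2sA}$ on $(0,1)\times(0,T/2]$, while $b(t)\ge m>0$. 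Thus the factor $e^{2sA}(s\lambda)^2\zeta^2 b^2$ weighting $|\phi|^2+|\varrho|^2$ in $\Gamma_0(\phi,\varrho)$ is bounded below by a positive constant on $(0,1)\times(0,T/2)$, and the Carleman estimate yields
$$\int_0^{T/2}\!\!\int_0^1\left(|\phi|^2+|\varrho|^2\right)dx\,dt\ \le\ C\,\Gamma_0(\phi,\varrho)\ \le\ C\,\mathrm{Obs}.$$

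For the term $\|\phi(0)\|_{L^2(0,1)}$ I would run the standard energy estimate for the retrograde equation of $\phi$: multiplying it by $\phi$ and integrating over $(0,1)$, the first-order terms $B(x\phi)_x$ and $-CD_2F(0,0)(\beta\phi)_x$ become, after integration by parts and using \eqref{eq:cond_beta}, multiples of $\|\phi(t)\|^2_{L^2(0,1)}$, the zero-order term $D_1F(0,0)\phi$ is handled likewise, and the coupling $\varrho\,1_{O_d}\phi$ is absorbed by Young's inequality. Dropping the nonnegative term $b(t)\|\sqrt a\,\phi_x(t)\|^2_{L^2(0,1)}$ and integrating the resulting differential inequality backwards in time (Gronwall) from an arbitrary $t\in[0,T/2]$ down to $0$ gives $\|\phi(0)\|^2_{L^2(0,1)}\le C(\|\phi(t)\|^2_{L^2(0,1)}+\int_0^{T/2}\|\varrho(s)\|^2_{L^2(0,1)}ds)$; averaging this in $t$ over $[T/4,T/2]$ and inserting the first step yields $\|\phi(0)\|^2_{L^2(0,1)}\le C\,\mathrm{Obs}$. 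Here one must check that the boundary term $a(x)\phi_x\phi$ at $x=0$ vanishes, which is exactly where $a(0)=0$ and the definitions of $H^1_a(0,1)$ and $H^2_a(0,1)$ enter.

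For the term $\|\varrho(T)\|_{L^2(0,1)}$ I would use that $\varrho$ solves the forward equation with $\varrho(0)=0$; the analogous energy estimate (the coupling now being $-(\tfrac{\alpha_1}{\mu_1}1_{O_1}+\tfrac{\alpha_2}{\mu_2}1_{O_2})\phi$) together with Gronwall forward in time gives $\|\varrho(T)\|^2_{L^2(0,1)}\le C\int_0^T\|\phi(s)\|^2_{L^2(O_1\cup O_2)}ds$. The contribution of $[0,T/2]$ is $\le C\,\mathrm{Obs}$ by the first step; for the contribution of $[T/2,T]$, where $e^{2sA}$ degenerates as $t\to T$, I would apply the same Carleman estimate after the change of variable $t\mapsto T-t$ (which swaps the two equations and, with a weight built from a function not degenerating at $t=T$, localises $|\phi|^2$ on $[T/2,T]$), whose observation term is comparable with $\mathrm{Obs}$. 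Adding the two bounds gives \eqref{Observability}.

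The delicate point is precisely the endpoint $t=T$ in the estimate for $\varrho(T)$: since $e^{2sA}$ vanishes at $t=T$, the global term of $\Gamma_0(\phi,\varrho)$ does not by itself control $\varrho$ (nor $\phi$ on $O_1\cup O_2$) near $t=T$, so reaching $\|\varrho(T)\|$ while keeping the right-hand side of \eqref{Observability} in the stated form genuinely requires either the time-reflected Carleman inequality or a dedicated dissipation argument for the forward $\varrho$-equation. Everything else is routine, modulo the bookkeeping of the degenerate boundary terms at $x=0$ in all the integrations by parts.
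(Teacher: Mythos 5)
The paper does not actually prove this corollary --- its ``proof'' is the single line ``See Corollary 1 of \cite{DemarqueLimacoViana_deg_sys2020}'' --- so there is no argument here to compare yours with step by step; I can only assess your proposal on its own terms. The first half of your argument, for $\|\phi(0)\|^2_{L^2(0,1)}$, is correct and is the standard route: on $[0,T/2]$ the weight $e^{2sA}(s\lambda)^2\zeta^2 b^2$ is bounded below because $m$ is bounded away from zero there, so $\int_0^{T/2}\!\int_0^1(|\phi|^2+|\varrho|^2)\le C\,\Gamma_0(\phi,\varrho)$, and the backward energy estimate for $\phi$ (with the first-order terms absorbed as you describe and the boundary contributions at $x=0$ vanishing thanks to $a(0)=0$ and the Dirichlet condition) plus an average over $t\in[T/4,T/2]$ gives $\|\phi(0)\|^2\le C\,\mathrm{Obs}$.

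The $\|\varrho(T)\|^2$ half, however, has a genuine gap, and the fix you propose does not close it. Your forward energy estimate reduces the problem to bounding $\int_{T/2}^T\|\phi(s)\|^2_{L^2(O_1\cup O_2)}\,ds$, and near $t=T$ this quantity is governed by the arbitrary final datum $\phi^T$: the backward equation for $\phi$ has not yet smoothed anything at times close to $T$, and $O_1\cup O_2$ is disjoint from $O$. The observation term $\mathrm{Obs}$ cannot see this, because its weight $e^{2sA}\zeta^8$ tends to $0$ as $t\to T$ (indeed $A\sim -c\,\tau\to-\infty$ while $\zeta\sim\tau$). Your proposed remedy --- the Carleman inequality after $t\mapsto T-t$ with a weight non-degenerate at $t=T$ --- fails on two counts: (i) its observation term $\int_{O\times(0,T)}e^{2s\tilde A}\tilde\zeta^8|\phi|^2$ carries a weight bounded below near $t=T$ and is therefore \emph{not} dominated by $\mathrm{Obs}$, whose weight vanishes there, so it is not ``comparable with $\mathrm{Obs}$'' as you claim; and (ii) the construction of the weight that is non-degenerate at one endpoint rests on energy estimates launched from the endpoint where the data are controlled ($\varrho(0)=0$, with $\phi$ propagating backwards into $[0,T/2]$ from the region where the unmodified Carleman already controls it); the time-reflected version would have to be launched from $t=T$, where $\phi(T)=\phi^T$ is uncontrolled, so the symmetric argument does not go through. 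Either a genuinely different mechanism is needed for $\|\varrho(T)\|$ (none is visible from the estimates stated in this paper), or that term should be regarded as unproven here --- note that it is never used later: Corollary \ref{cor:carleman_pesos_t} and the proof of Theorem \ref{theorem case linear} invoke only $\|\phi(0)\|^2$ and the weighted space--time norms.
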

\begin{proof}
    See Corollary 1 of \cite{DemarqueLimacoViana_deg_sys2020}. 
\end{proof}
\color{black}

In the following sections we will need weights which depend only on $t$. Let us define
\begin{equation*}
\begin{cases}
A^*(t) = \displaystyle\max_{x \in (0,1)} A(x,t), \qquad \hat A(t) = \displaystyle\min_{x \in (0,1)} A(x,t), \\
\zeta^*(t) = \displaystyle\max_{x \in (0,1)} \zeta(x,t), \qquad \hat \zeta(t) = \displaystyle\min_{x \in (0,1)} \zeta(x,t),
\end{cases}
\end{equation*}
and we observe that $A^*(t) < 0$, $\hat A(t) < 0$ and that $\zeta^*(t) / \hat \zeta(t)$ does not depend on $t$ and is equal to some constant $\zeta_0 \in \R$. Moreover, if $\lambda$ is sufficiently large we can suppose
\begin{equation}
\label{eq:comp_pesos}
3 A^*(t) < 2 \hat A(t) < 0.
\end{equation}

Let us define 
$$
\hat \Gamma(\phi,\varrho,\tilde \varrho) = \int_Q e^{2s \hat A}[(s\lambda) \hat \zeta b^2 a (|\phi_x|^2+|\varrho_x|^2+|\tilde \varrho_x|^2) + (s\lambda)^2 {\hat \zeta}^2 b^2 (|\phi|^2 + |\varrho|^2 + |\tilde \varrho|^2)]dxdt.
$$

Thus, Proposition \ref{prop:carleman} and Corollary \ref{cor:observability}  imply directly the following corollary where the weights depend only on $t$.

\begin{coro} 
\label{cor:carleman_pesos_t}
There exist positive constants $C$, $\lambda_0$ and $s_0$ such that, for any $s \geq s_0$, $\lambda \geq \lambda_0$ and any $\phi^T \in L^2(Q)$, the corresponding solution $(\phi,\psi^1,\psi^2)$ of \eqref{eq:adjoint_optimality_system} satisfies
\begin{equation}\label{pesos_t_Carleman for eq:adjoint_optimality_system}
\begin{split}
\|\phi(0)\|^2_{L^2(0,1)} +
\hat \Gamma(\phi,\psi^1,\psi^2) \leq & C \left( \int_Q e^{2s A^*} (\zeta^*)^4 (|F|^2+|F_1|^2+|F_2|^2)dxdt + \int_{O\times(0,T)} e^{2s A^*} (\zeta^*)^8 |\phi|^2 dxdt\right).
\end{split}
\end{equation}
\end{coro}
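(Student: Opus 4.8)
The plan is to derive this corollary as a routine consequence of Proposition \ref{prop:carleman} (the Carleman inequality with full space-time weights) together with Corollary \ref{cor:observability}, simply by estimating the $x$-dependent weights from above or below by their $t$-dependent extrema. First I would observe that, since $\varphi$ and $A$ are negative (because $e^{\lambda(|\Psi|_\infty+\Psi)} < e^{3\lambda|\Psi|_\infty}$ on $[0,1]$, by construction of $\Psi$ and the fact $|\Psi|_\infty < 3|\Psi|_\infty$), we have for the exponential weights the pointwise bounds $e^{2s\hat A(t)} \le e^{2sA(x,t)} \le e^{2sA^*(t)}$, and one must be careful about the sign when combining the two. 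On the left-hand side of \eqref{Carleman for eq:adjoint_optimality_system} the term $e^{2sA}$ is bounded \emph{below} by $e^{2s\hat A}$, so $\hat\Gamma(\phi,\psi^1,\psi^2) \le \Gamma(\phi,\psi^1,\psi^2)$ after also using $\hat\zeta \le \zeta$; on the right-hand side $e^{2sA} \le e^{2sA^*}$ and $\zeta \le \zeta^*$, so the right side of \eqref{Carleman for eq:adjoint_optimality_system} is dominated by the right side of \eqref{pesos_t_Carleman for eq:adjoint_optimality_system} up to powers of $s\lambda$ absorbed into the constant $C$ (recall $\zeta^*/\hat\zeta = \zeta_0$ is a fixed constant, so all the $\zeta$ ratios are harmless). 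Chaining these inequalities through Proposition \ref{prop:carleman} immediately gives the $\hat\Gamma$ part of the claim.

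Next I would add in the term $\|\phi(0)\|^2_{L^2(0,1)}$. For this, since Corollary \ref{cor:observability} only covers the case $F_0 = F_1 = 0$, I would instead argue directly: writing $(\phi,\psi^1,\psi^2)$ as the sum of the solution with zero source terms and the solution with zero terminal/initial data, the energy estimate for the degenerate non-autonomous parabolic system (from Appendix \ref{appendix A}, applied on the time interval $[0,T/2]$ where the weight $e^{2sA}$ is bounded above and below by positive constants, since $m(0)>0$) controls $\|\phi(0)\|^2_{L^2(0,1)}$ by the $L^2$-norm of $\phi$ on $\Omega \times (T/4,T/2)$ plus the source terms, and on that subinterval $e^{2sA}$ and $\zeta$ are comparable to constants, so this $L^2$-norm is in turn dominated by $\hat\Gamma(\phi,\psi^1,\psi^2)$. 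Adding the source contribution, already present on the right-hand side of the $\hat\Gamma$ estimate, closes this step. Alternatively, one can simply cite that this is exactly Corollary 1 of \cite{DemarqueLimacoViana_deg_sys2020} transported via the diffeomorphism, which is the line the paper takes.

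The main obstacle, such as it is, is purely bookkeeping: one must track the signs of $A^*$ and $\hat A$ correctly (both negative, with $3A^* < 2\hat A < 0$ by \eqref{eq:comp_pesos}, which is what is actually needed when the weight $e^{3\lambda|\Psi|_\infty}$-type exponents appear after combining quadratic and source terms), and one must make sure that when passing from the local integral $\int_{O\times(0,T)} e^{2sA} s^8\lambda^8\zeta^8|\phi|^2$ to $\int_{O\times(0,T)} e^{2sA^*}(\zeta^*)^8|\phi|^2$ the powers of $s$ and $\lambda$ are genuinely absorbable into $C$ for $s\ge s_0$, $\lambda \ge \lambda_0$ — which they are, since $s,\lambda$ range over a fixed half-line and the corollary is stated with the same quantifiers. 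No new analytic input beyond Proposition \ref{prop:carleman}, Corollary \ref{cor:observability}, and the standard energy estimate is required.
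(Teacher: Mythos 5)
Your proposal matches the paper's (essentially one-line) argument: the paper simply asserts that Proposition \ref{prop:carleman} and Corollary \ref{cor:observability} ``imply directly'' this corollary via the pointwise comparisons $e^{2s\hat A}\le e^{2sA}\le e^{2sA^*}$ and $\hat\zeta\le\zeta\le\zeta^*$, and your weight bookkeeping and your handling of the $\|\phi(0)\|^2_{L^2(0,1)}$ term (needed because Corollary \ref{cor:observability} is stated only for vanishing sources) are a correct and in fact more careful elaboration of that. One small caution: your claim that the factors $s^4\lambda^4$ and $s^8\lambda^8$ are absorbable into $C$ ``since $s,\lambda$ range over a fixed half-line'' is not a valid justification (they are unbounded on $[s_0,\infty)$); the standard resolution, implicit in the paper, is that $s$ and $\lambda$ are fixed at $s_0$, $\lambda_0$ once the Carleman parameters have been chosen, after which the absorption is trivial.
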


\subsection{Global null controllability of the linearized system}

Let us define the weights: 
\begin{equation}\label{eq:weights_rhos}
   \left\{ \begin{array}{l}
 \rho_0 = e^{-sA^*}  (\zeta^*)^{-2}, \qquad   \rho_1 = e^{-sA^*}  (\zeta^*)^{-4},\\  \rho_2 = e^{-3sA^*/2}  \hat \zeta^{-1},  \qquad \hat{\rho} = e^{-sA^*} (\zeta^*)^{-3},      
    \end{array}\right.
\end{equation}
satisfying
\begin{equation}\label{eq:compara_rhos}
 \rho_{1}\leq C \hat{\rho}\leq C\rho_{0}\leq C\rho_{2}, \qquad \hat{\rho}^{2}=\rho_{1}\rho_{0} \qquad \text{and} \qquad \rho_2 \leq C \rho_1^2.   
\end{equation}

In particular, Corollary \ref{cor:carleman_pesos_t} and \eqref{eq:comp_pesos} imply 
\begin{equation}
\label{eq:carleman_simples}
\begin{split}
\|\phi(0)\|^2_{L^2(0,1)} +
\int_Q \rho_2^{-2} (|\phi|^2 + |\psi^{1}|^2 + |\psi^{2}|^2)dxdt
\leq & \ C \left( \int_Q \rho_0^{-2} (|F|^2+|F_1|^2+|F_2|^2)dxdt \right. \\
& \left. + \int_{O\times(0,T)} \rho_1^{-2} |\phi|^2 dxdt\right).
\end{split}
\end{equation}

In the following theorem we show the global null controllability of the linearized system \eqref{eq:linearized_system}. In particular, since the weight $\rho_0$ blows up at $t=T$, \eqref{estimate for solution} shows that $y(\cdot,T)=0$ in $[0,1]$. The same vanishing conclusion holds for $p^i$, $i=1,2$ (which are used to define the follower's controls $v^i$) and the leader control $h$.

\begin{teo}\label{theorem case linear}
If $y_{0}\in L^{2}(0,1)$, $\rho_2 H$, $\rho_2 H_1$, $\rho_2 H_2 \in L^2(Q)$, then there exists a control $h\in L^{2}(O\times (0,T))$ with associated states $y, p^{1}, p^{2}\in C^{0}([0,T];L^{2}(0,1))\cap L^{2}(0,T;H^{1}_{a})$  from \eqref{eq:linearized_system} such that
\begin{equation}\label{estimate for solution}
\int_Q \rho_0^2 |y|^2 dxdt+ \int_Q \rho_0^2 (|p^1|^2 +  |p^2|^2)dxdt + \int_{O \times (0,T)} \rho_1^2 |h|^2dxdt \leq
C \kappa_{0}(H,H_{1},H_{1},y_{0}),   
\end{equation}
where $\kappa_{0}(H,H_{1},H_{1},y_{0})= |\rho_2 H|^2_{L^2(Q)} + |\rho_2 H_1|^2_{L^2(Q)} + |\rho_2 H_2|^2_{L^2(Q)} + |y_0|^2_{L_2(0,1)}$. In particular, $y(x,T)=0$, for all $x\in [0,1]$. 

\end{teo}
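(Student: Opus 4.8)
The plan is to use the penalized duality (Fursikov--Imanuvilov) method, with the Carleman inequality \eqref{eq:carleman_simples}, a consequence of Corollary~\ref{cor:carleman_pesos_t}, playing the role of the observability estimate for the adjoint optimality system \eqref{eq:adjoint_optimality_system}. Put
$$
P_0=\bigl\{\,(\phi,\psi^1,\psi^2)\in C^2(\overline Q)^3\ :\ \phi=\psi^1=\psi^2=0\ \text{on}\ \Sigma,\ \ \psi^1(\cdot,0)=\psi^2(\cdot,0)=0\,\bigr\},
$$
and, for $\Phi=(\phi,\psi^1,\psi^2)\in P_0$, let $\mathcal L_0^*\Phi$, $\mathcal L_1^*\Phi$, $\mathcal L_2^*\Phi$ be the three differential expressions on the left-hand sides of \eqref{eq:adjoint_optimality_system} (coupling terms included), so that $\Phi$ is the solution of \eqref{eq:adjoint_optimality_system} with sources $F=\mathcal L_0^*\Phi$, $F_i=\mathcal L_i^*\Phi$ and final datum $\phi^T=\phi(\cdot,T)$. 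Define on $P_0$ the symmetric bilinear form
$$
\mathcal B(\Phi,\widehat\Phi)=\int_Q\rho_0^{-2}\bigl(\mathcal L_0^*\Phi\,\mathcal L_0^*\widehat\Phi+\mathcal L_1^*\Phi\,\mathcal L_1^*\widehat\Phi+\mathcal L_2^*\Phi\,\mathcal L_2^*\widehat\Phi\bigr)\,dxdt+\int_{O\times(0,T)}\rho_1^{-2}\,\phi\,\widehat\phi\,dxdt .
$$
By \eqref{eq:carleman_simples} applied to $\Phi\in P_0$ one has $\|\phi(0)\|_{L^2(0,1)}^2+\int_Q\rho_2^{-2}(|\phi|^2+|\psi^1|^2+|\psi^2|^2)\,dxdt\le C\,\mathcal B(\Phi,\Phi)$, so $\mathcal B$ is an inner product on $P_0$; let $\mathcal P$ be the completion of $P_0$ for $\mathcal B(\cdot,\cdot)^{1/2}$, and note that this inequality persists on $\mathcal P$.

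Consider next the linear form $\ell:\mathcal P\to\R$ given by
$$
\ell(\widehat\Phi)=\int_Q\bigl(H\widehat\phi+H_1\widehat\psi^1+H_2\widehat\psi^2\bigr)\,dxdt+\int_0^1 y_0(x)\,\widehat\phi(x,0)\,dx .
$$
Writing $\int_Q H\widehat\phi=\int_Q(\rho_2 H)(\rho_2^{-1}\widehat\phi)$, applying Cauchy--Schwarz together with $\rho_1\le C\rho_2$ from \eqref{eq:compara_rhos} and the (extended) Carleman inequality, one obtains $|\ell(\widehat\Phi)|\le C\,\kappa_0(H,H_1,H_2,y_0)^{1/2}\,\mathcal B(\widehat\Phi,\widehat\Phi)^{1/2}$, hence $\ell$ is continuous on $\mathcal P$ with $\|\ell\|^2\le C\,\kappa_0$. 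By the Riesz representation theorem there is a unique $\widehat\Phi=(\widehat\phi,\widehat\psi^1,\widehat\psi^2)\in\mathcal P$ with $\mathcal B(\widehat\Phi,\Phi)=\ell(\Phi)$ for all $\Phi\in\mathcal P$, and $\mathcal B(\widehat\Phi,\widehat\Phi)=\ell(\widehat\Phi)\le C\,\kappa_0$. Define
$$
h:=\rho_1^{-2}\,\widehat\phi\,\cara_{_{\mathcal O}},\qquad y:=-\rho_0^{-2}\,\mathcal L_0^*\widehat\Phi,\qquad p^i:=-\rho_0^{-2}\,\mathcal L_i^*\widehat\Phi\quad(i=1,2).
$$
Reading off the four terms of $\mathcal B(\widehat\Phi,\widehat\Phi)$ immediately gives
$$
\int_{O\times(0,T)}\rho_1^2|h|^2\,dxdt+\int_Q\rho_0^2\bigl(|y|^2+|p^1|^2+|p^2|^2\bigr)\,dxdt\le C\,\mathcal B(\widehat\Phi,\widehat\Phi)\le C\,\kappa_0(H,H_1,H_2,y_0),
$$
which is \eqref{estimate for solution}.

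It remains to identify $(y,p^1,p^2)$ as the state of \eqref{eq:linearized_system} associated with $h$. Taking $\Phi\in P_0$ in $\mathcal B(\widehat\Phi,\Phi)=\ell(\Phi)$, substituting the definitions of $h,y,p^i$ and integrating by parts in $Q$ --- the lateral contributions vanishing by the boundary conditions and the behaviour at the degeneracy built into $H^2_a$, and the end-point contributions reducing, thanks to $\psi^i(\cdot,0)=0$, to $\int_0^1 y_0\,\widehat\phi(\cdot,0)\,dx$ plus boundary terms carrying the traces $y(\cdot,T)$ and $p^i(\cdot,T)$ --- the identity becomes the transposition formulation of \eqref{eq:linearized_system} tested against $\Phi$; since $\phi^T$ and $\psi^i(\cdot,T)$ run over dense subsets of $L^2(0,1)$, this is equivalent to $(y,p^1,p^2)$ solving \eqref{eq:linearized_system} with $y(0)=y_0$ and $p^i(T)=0$. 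As $\rho_1,\rho_2$ are bounded below on $\overline Q$, the data $H,H_1,H_2$ and $h$ lie in $L^2(Q)$, so the well-posedness and energy estimates for the coupled forward--backward system (Appendix~\ref{appendix A}; see also \cite{GYL-Carleman-2025}) promote the solution to $y,p^1,p^2\in C^0([0,T];L^2(0,1))\cap L^2(0,T;H^1_a)$. Finally, $\rho_0$ blows up as $t\to T$ fast enough that $\rho_0^2$ is not integrable near $T$; together with $y\in C^0([0,T];L^2(0,1))$, the bound $\int_Q\rho_0^2|y|^2<\infty$ forces $y(\cdot,T)=0$ on $[0,1]$, which is the announced null controllability (likewise $p^1,p^2$ vanish at $t=T$, and so does $h$ through $\rho_1$).

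The Carleman inequality being already established, no single step is hard, but two matters demand care. First, the weights $\rho_0,\rho_1,\rho_2,\widehat\rho$ of \eqref{eq:weights_rhos}--\eqref{eq:compara_rhos} have to be balanced so that simultaneously $\ell$ is continuous, the formulas for $h$ and $(y,p^1,p^2)$ yield finite weighted norms, and the duality identity is internally consistent --- which is exactly the purpose of the comparison relations \eqref{eq:compara_rhos}. Second, one must pass from the abstract minimizer $\widehat\Phi\in\mathcal P$, which is only a $\mathcal B$-limit of smooth functions, to a genuine solution of the \emph{coupled} forward--backward system \eqref{eq:linearized_system}, and recover enough parabolic regularity for the trace $y(\cdot,T)$ to make sense; this is where the well-posedness theory for degenerate non-autonomous equations of Appendix~\ref{appendix A} comes in.
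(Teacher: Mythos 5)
Your proposal is correct and follows essentially the same route as the paper: the Fursikov--Imanuvilov penalized duality argument, with the weighted bilinear form built from the adjoint-system operators, coercivity supplied by the Carleman inequality \eqref{eq:carleman_simples}, Riesz/Lax--Milgram on the completion, and identification of the state by transposition. Apart from an immaterial sign convention in the definitions of $h$ and $(y,p^1,p^2)$, the argument matches the paper's proof step for step.
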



\begin{proof}
Let us denote by $$L\varphi =  \varphi_{t} - \frac{1}{l(t)^2}(a(x)\varphi_{x})_{x} - \frac{l'(t)}{l(t)}x \varphi_x + c(x,t)\varphi + d(x,t)\sqrt{a} \varphi_x$$ and $$L^{\ast}\varphi=-\varphi_{t} - \frac{1}{l(t)^2} (a(x)\varphi_{x})_{x} + \frac{l'(t)}{l(t)} (x \varphi)_x + c(x,t)\varphi - (d(x,t)\sqrt{a} \varphi)_x.$$
Then, we define 
\begin{equation*}
\begin{array}{l}
    \mathcal{P}_{0} = \{ (\phi,\psi^{1},\psi^{2})\in C^{2}(\overline{Q})^{3}; \phi(0,t)=\phi(1,t) = 0\ \text{a.e in}\ (0,T),\ \psi^{i}(0,t)=\psi^{i}(1,t)=0\, \text{a.e in}\, (0,T),\\ \hspace{1.2cm}\psi^{i}(\cdot,0)=0\ \text{in}\ \Omega, \ i=\lbrace 1, 2\rbrace\}
\end{array}
\end{equation*}
and the application $b:\mathcal{P}_{0}\times \mathcal{P}_{0}\rightarrow \mathbb{R}$ given by
\begin{equation}
\label{eq:def_b}
    \begin{array}{l}
      \hspace{-1.5cm}b((\tilde{\phi},\tilde{\psi^{1}},\tilde{\psi^{2}}),(\phi,\psi^{1},\psi^{2})) \\ = \displaystyle\int_{Q}\rho_{0}^{-2}(L^{\ast}\tilde{\phi}-\alpha_{1}\tilde{\psi^{1}}1_{O_{d}}-\alpha_{2}\tilde{\psi^{2}}1_{O_{d}})(L^{\ast}\phi-\alpha_{1}\psi^{1}1_{O_{d}}-\alpha_{2}\psi^{2}1_{O_{d}})\ dx\ dt \vspace{0.1cm}\\
      + \displaystyle\sum_{i=1}^{2}\displaystyle\int_{Q}\rho_{0}^{-2}(L\tilde{\psi^{i}} + \frac{1}{\mu_{i}}\tilde{\phi}1_{O_{i}})(L\psi^{i} + \frac{1}{\mu_{i}}\phi1_{O_{i}})\ dx\ dt\vspace{0.1cm}\\
      +\displaystyle\int_{O\times (0,T)}\rho_{1}^{-2}\tilde{\phi}\phi\ dx\ dt,\,\, \forall (\phi,\psi^{1},\psi^{2}),(\tilde{\phi},\tilde{\psi^{1}},\tilde{\psi^{2}})\in \mathcal{P}_{0},
    \end{array}
\end{equation}
which is bilinear on $\mathcal{P}_{0}$ and defines an inner product. Indeed, taking $(\tilde{\phi},\tilde{\psi^{1}},\tilde{\psi^{2}})=(\phi,\psi^{1},\psi^{2})$ in \eqref{eq:def_b}, we have that, by \eqref{eq:carleman_simples}  $b(\cdot,\cdot)$ is positive definite. The other properties are straightforwardly verified.

Let us consider the space $\mathcal{P}$ the completion of $\mathcal{P}_{0}$ for the norm associated to $b(\cdot,\cdot)$ (which we denote by $\|.\|_{\mathcal{P}}$). Then, $b(\cdot,\cdot)$ is symmetric, continuous and coercive bilinear form on $\mathcal{P}$.

Now, let us define the functional linear $\ell :\mathcal{P}\rightarrow\mathbb{R}$ as \begin{equation*}
    \langle\ell, (\phi,\psi^{1},\psi^{2})\rangle = \displaystyle\int_{0}^{1}y_{0}\phi(0)dx + \displaystyle\int_{Q}(H\phi + H_{1}\psi^{1} + H_{2}\psi^{2})dx dt.
\end{equation*}

Note that $\ell$ is a bounded linear form on $\mathcal{P}$. Indeed, applying the classical Cauchy-Schwartz inequality 
in $\mathbb{R}^{4}$ and using \eqref{eq:carleman_simples}, we get
\begin{equation}\label{l limitado}
    \begin{array}{l}
       |\langle\ell, (\phi,\psi^{1},\psi^{2})\rangle|\leq  |y_{0}|_{L^{2}(0,1)}|\phi(0)|_{L^{2}(0,1)} + |\rho_{2}H|_{L^{2}(Q)}|\rho_{2}^{-1}\phi|_{L^{2}(Q)} \\
       \hspace{3cm} + |\rho_{2}H_{1}|_{L^{2}(Q)}|\rho_{2}^{-1}\psi^{1}|_{L^{2}(Q)} + |\rho_{2}H_{2}|_{L^{2}(Q)}|\rho_{2}^{-1}\psi^{2}|_{L^{2}(Q)}\\
       \leq C(|y_{0}|^{2}_{L^{2}(0,1)} + |\rho_{2}H|^{2}_{L^{2}(Q)} + |\rho_{2}H_{1}|^{2}_{L^{2}(Q)}  + |\rho_{2}H_{2}|^{2}_{L^{2}(Q)})^{1/2}\left(b((\phi,\psi^{1},\psi^{2}),(\phi,\psi^{1},\psi^{2}))\right)^{1/2}\\
     \leq C(|y_{0}|^{2}_{L^{2}(0,1)} + |\rho_{2}H|^{2}_{L^{2}(Q)} + |\rho_{2}H_{1}|^{2}_{L^{2}(Q)}  + |\rho_{2}H_{2}|^{2}_{L^{2}(Q)})^{1/2}\|(\phi,\psi^{1},\psi^{2})\|_{\mathcal{P}},
    \end{array}
\end{equation}
for all $(\phi,\psi^{1},\psi^{2})\in\mathcal{P}$. Consequently, in view of Lax-Milgram's theorem, there is only one $(\hat{\phi},\hat{\psi^{1}},\hat{\psi^{2}})\in\mathcal{P}$ satisfying
\begin{equation}\label{eq: por Lax-M.}    b((\hat{\phi},\hat{\psi^{1}},\hat{\psi^{2}}),(\phi,\psi^{1},\psi^{2})) = \langle\ell, (\phi,\psi^{1},\psi^{2})\rangle,\,\, \forall (\phi,\psi^{1},\psi^{2})\in\mathcal{P}.
\end{equation}

Let us set
\begin{equation}\label{definição de y, pi, h}
    \left\{\begin{array}{lll}
      y = \rho_{0}^{-2}(L^{\ast}\hat{\phi}-\alpha_{1}\hat{\psi^{1}}1_{O_{d}}-\alpha_{2}\hat{\psi^{2}}1_{O_{d}})&\text{in} & Q,\\
      p^{i} = \rho_{0}^{-2}(L\hat{\psi^{i}} + \dfrac{1}{\mu_{i}}\hat{\phi}1_{O_{i}}),\, i=\{1,2\} &\text{in} & Q,\\
      h = -\rho_{1}^{-2}\hat{\phi}1_{O} &\text{in} & Q.
\end{array}\right.
\end{equation}
Then, replacing \eqref{definição de y, pi, h} in \eqref{eq: por Lax-M.} we have
\begin{equation*}
    \begin{array}{l}
         \displaystyle\int_{Q}y B\,dx\,dt + \displaystyle\int_{Q}(p^{1}B_{1} + p^{2}B_{2})\,dx\,dt\\
         = \displaystyle\int_{0}^{1}y_{0}\phi(0)dx + \displaystyle\int_{O\times(0,T)} h \phi\,dx\,dt+\displaystyle\int_{Q}(H\phi + H_{1}\psi^{1} + H_{2}\psi^{2})dx\ dt,
    \end{array}
\end{equation*}
where $(\phi,\psi^{1},\psi^{2})$ is a solution of the system
\begin{equation*}
    \left\{\begin{array}{lll}
       L^{\ast}\phi = B +\alpha_{1}{\psi^{1}}1_{O_{d}} + \alpha_{2}{\psi^{2}}1_{O_{d}}  &\text{in}& Q,  \\
       L\psi^{i} = B_{i} - \dfrac{1}{\mu_{i}}{\phi}1_{O_{i}}  \  &\text{in}& Q, \\
       \phi(0,t)=\phi(1,t)=0 & \text{on} & (0,T), \\ \psi^i(0,t)=\psi^i(1,t)=0 & \text{on}& (0,T), \\
       \phi(\cdot,T)=0,\, \psi^{i}(\cdot,0)=0 &\text{in}& \Omega,
   \end{array}\right.
\end{equation*}
for $i=\lbrace 1, 2\rbrace$. Therefore, $(y,p^{1},p^{2})$ is a solution by transposition of \eqref{eq:linearized_system}. Also, as $(\hat{\phi},\hat{\psi^{1}},\hat{\psi^{2}})\in\mathcal{P}$ and 
$H, H_{1}, H_{2}\in L^{2}(Q)$,  
using the well-posedness result of Appendix \ref{appendix A} applied to a linear equation we obtain
$$y,p^{1},p^{2}\in C^{0}([0,T];L^{2}(0,1))\cap L^{2}(0,T;H^{1}_{a}).$$


Moreover, from \eqref{l limitado} and \eqref{eq: por Lax-M.}
\begin{equation*}
\left(b((\hat{\phi},\hat{\psi^{1}},\hat{\psi^{2}}),(\hat{\phi},\hat{\psi^{1}},\hat{\psi^{2}}))\right)^{1/2}  \leq C(|y_{0}|^{2}_{L^{2}(0,1)} + |\rho_{2}H|^{2}_{L^{2}(Q)} + |\rho_{2}H_{1}|^{2}_{L^{2}(Q)}  + |\rho_{2}H_{2}|^{2}_{L^{2}(Q)})^{1/2}
\end{equation*}
that is,
\begin{equation*}
\begin{array}{l}
\displaystyle\int_{Q}\rho_{0}^{2}|y|^{2}\ dxdt       + \displaystyle\sum_{i=1}^{2}\displaystyle\int_{Q}\rho_{0}^{2}|p^{i}|^{2}\ dxdt +\displaystyle\int_{O\times (0,T)}\rho_{1}^{2}|h|^{2} dx dt\\
\leq C(|y_{0}|^{2}_{L^{2}(0,1)} + |\rho_{2}H|^{2}_{L^{2}(Q)} + |\rho_{2}H_{1}|^{2}_{L^{2}(Q)}  + |\rho_{2}H_{2}|^{2}_{L^{2}(Q)}),
\end{array}
\end{equation*}
proving \eqref{estimate for solution}.

\end{proof}

In order to get the local null controllability of he nonlinear system we will need the following additional estimates.

\begin{propo}
\label{addicional_estimates_case_linear}
Under the hypothesis of Theorem \ref{theorem case linear}, we have, furthermore, that the control $h\in L^{2}(O\times (0,T))$ and the associated states $y, p^{1}, p^{2}\in C^{0}([0,T];L^{2}(0,1))\cap L^{2}(0,T;H^{1}_{a})$, solution of \eqref{eq:linearized_system}, satisfies the additional estimates 
\begin{equation}\label{des Proposition 5}
    \begin{array}{c}
\displaystyle\sup_{[0,T]}(\hat{\rho}^{2}\|y\|^{2}_{L^{2}(0,1)}) + \displaystyle\sup_{[0,T]}(\hat{\rho}^{2}\|p^{1}\|^{2}_{L^{2}(0,1)}) + \displaystyle\sup_{[0,T]}(\hat{\rho}^{2}\|p^{2}\|^{2}_{L^{2}(0,1)})\vspace{0.1cm}\\
+\displaystyle\int_{Q}\hat{\rho}^{2} a(x)(|y_{x}|^{2} + |p^{1}_{x}|^{2}+|p^{2}_{x}|^{2})dxdt\ \leq C \kappa_{0}(H,H_{1},H_{2},y_{0})  
    \end{array}
\end{equation}
and, if $y_{0}\in H^{1}_{a}(0,1)$ 
\begin{equation}\label{des Proposition 6}
    \begin{array}{c}
\displaystyle\sup_{[0,T]}(\rho_{1}^{2}\|\sqrt{a}y_{x} \|^{2}_{L^{2}(0,1)})+ \displaystyle\sup_{[0,T]}(\rho_{1}^{2}\|\sqrt{a}p^{1}_{x} \|^{2}_{L^{2}(0,1)}) + \displaystyle\sup_{[0,T]}(\rho_{1}^{2}\|\sqrt{a}p^{2}_{x} \|^{2}_{L^{2}(0,1)})\\
+ \displaystyle\int_{Q}\rho_{1}^{2}(|y_{t}|^{2}+|p^{1}_{t}|^{2}+|p^{2}_{t}|^{2}+|(a(x)y_{x})_{x}|^{2} + |(a(x)p^{1}_{x})_{x}|^{2}+ |(a(x)p^{2}_{x})_{x}|^{2})dxdt\\
\leq C \kappa_{1}(H,H_{1},H_{2},y_{0}),  
    \end{array}
\end{equation}
where $\kappa_{1}(H,H_{1},H_{1},y_{0})= |\rho_2 H|^2_{L^2(Q)} + |\rho_2 H_1|^2_{L^2(Q)} + |\rho_2 H_2|^2_{L^2(Q)} + \|y_0\|^2_{H^{1}_{a}}$. 
\end{propo}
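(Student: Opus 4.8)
The plan is to derive both estimates by \emph{weighted energy methods}, bootstrapping from the bound \eqref{estimate for solution} of Theorem~\ref{theorem case linear}. I would use repeatedly the weight relations of \eqref{eq:compara_rhos} — namely $\rho_1\le C\hat\rho\le C\rho_0\le C\rho_2$ and $\hat\rho^2=\rho_0\rho_1$ — the fact that, since $m(0)>0$, all the weights $\rho_0,\rho_1,\rho_2,\hat\rho$ are bounded above and below near $t=0$, and two inequalities which I would first check directly from the explicit expressions of $A^*$ and $\zeta^*$: near $t=T$ one has $|(\hat\rho^2)'|\le C\rho_0^2$ and $|\rho_1'|\le C\rho_0$, i.e. the time–derivative of a weight is dominated by a slightly fatter weight. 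These last two estimates are the technical heart of the argument.

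I would prove \eqref{des Proposition 5} first. Multiply the $y$–equation of \eqref{eq:linearized_system} by $\hat\rho^2 y$ and integrate over $\Omega\times(0,t)$. Since $y=0$ on $\Sigma$ and $a(0)=0$, all boundary terms produced by the integrations by parts in the diffusion and first–order terms vanish, and because $b(t)\ge m>0$ the left–hand side contains $\frac{1}{2}\hat\rho^2(t)\|y(t)\|^2_{L^2(0,1)}$ and $m\int\hat\rho^2 b\,a\,y_x^2$. The term $\frac{1}{2}\int(\hat\rho^2)'\|y\|^2$ is absorbed using $|(\hat\rho^2)'|\le C\rho_0^2$ together with \eqref{estimate for solution}; the zeroth–order term gives $C\int\hat\rho^2 y^2\le C\int\rho_0^2 y^2$ (since $\hat\rho\le C\rho_0$); and the first–order terms $B(t)x\,y_x$ and $C(t)D_2F(0,0)\beta(x)y_x$ are bounded by $\varepsilon\int\hat\rho^2 a\,y_x^2+C_\varepsilon\int\hat\rho^2 y^2$, using $x/\sqrt a\le 1/\sqrt{a(1)}$ and $\beta^2\le a$, the $\varepsilon$–term then being absorbed on the left. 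For the source terms I would split the weights through $\hat\rho^2=\rho_0\rho_1$: thus $\int_{\mathcal{O}}\hat\rho^2 h\,y=\int_{\mathcal{O}}(\rho_1 h)(\rho_0 y)\le\frac{1}{2}\int_{\mathcal{O}}\rho_1^2 h^2+\frac{1}{2}\int\rho_0^2 y^2$, while $\int\hat\rho^2\frac{1}{\mu_i}p^i 1_{\mathcal{O}_i}y$ and $\int\hat\rho^2 Hy$ are controlled using $\hat\rho\le C\rho_0$ and $\hat\rho\le C\rho_2$, respectively. Collecting the terms and invoking \eqref{estimate for solution} yields $\sup_{[0,T]}\hat\rho^2\|y\|^2_{L^2}+\int_Q\hat\rho^2 a\,y_x^2\le C\kappa_0$, the $t=0$ contribution being $\hat\rho^2(0)\|y_0\|^2\le C\|y_0\|^2$ because $\hat\rho$ is bounded near $0$. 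Running the same computation on the backward equations for $p^1,p^2$ — multiplying by $\hat\rho^2 p^i$, integrating over $\Omega\times(t,T)$, using $p^i(T)=0$ — gives the remaining terms of \eqref{des Proposition 5}.

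For \eqref{des Proposition 6} I would pass to the rescaled unknowns $\bar y=\rho_1 y$ and $\bar p^i=\rho_1 p^i$. Since $\rho_1$ depends only on $t$, each of them solves the same linear degenerate equation as $y$, resp.\ $p^i$, with a right–hand side of the form $\rho_1(\text{source})+\rho_1'(\text{state})$; the estimates $\int\rho_1^2 h^2 1_{\mathcal{O}}\le C\kappa_0$ (from \eqref{estimate for solution}), $\int\rho_1^2((p^i)^2+y^2)\le C\int\rho_0^2((p^i)^2+y^2)\le C\kappa_0$ (as $\rho_1\le C\rho_0$), $\int\rho_1^2(H^2+H_i^2)\le C\int\rho_2^2(H^2+H_i^2)$ (as $\rho_1\le C\rho_2$) and $\int|\rho_1'|^2(y^2+(p^i)^2)\le C\int\rho_0^2(y^2+(p^i)^2)\le C\kappa_0$ (using $|\rho_1'|\le C\rho_0$) show that these sources lie in $L^2(Q)$ with squared norm $\le C\kappa_0$. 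Moreover $\bar y(0)=\rho_1(0)y_0\in H^1_a(0,1)$ with $\|\bar y(0)\|^2_{H^1_a}\le C\|y_0\|^2_{H^1_a}$ and $\bar p^i(0)=0$, while \eqref{des Proposition 5} gives $\rho_1^2\|y(t)\|^2_{L^2},\rho_1^2\|p^i(t)\|^2_{L^2}\le C\kappa_0(\zeta^*(t))^{-2}\to 0$ as $t\to T$, so $\bar y,\bar p^i\in C([0,T];L^2)$ with $\bar p^i(T)=0$. I would then apply to $\bar y,\bar p^1,\bar p^2$ the $H^1_a$–$H^2_a$ energy/regularity estimate for the linear non-autonomous degenerate equation (of the type already used in the proof of Theorem~\ref{theorem case linear} and in Appendix~\ref{appendix A}), which controls $\sup_{[0,T]}\|\cdot\|^2_{H^1_a}+\int_0^T(\|\partial_t\cdot\|^2_{L^2}+\|(a\,\partial_x\cdot)_x\|^2_{L^2})$ by the squared $H^1_a$–norm of the datum plus the squared $L^2(Q)$–norm of the source; translating back to $y,p^i$ and using $\rho_1^2|y_t|^2\le 2|\bar y_t|^2+2|\rho_1'|^2 y^2$ with $\int|\rho_1'|^2 y^2\le C\kappa_0$ yields exactly \eqref{des Proposition 6}.

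I expect the main obstacle to be the elementary but delicate verification, from the explicit formulas for $A^*$ and $\zeta^*$, that $(\hat\rho^2)'$ and $\rho_1'$ are dominated near $t=T$ by $\rho_0^2$ and $\rho_0$ respectively — where the polynomial gains in $\zeta^*$ must be balanced against the exponential $e^{-sA^*}$ and its large derivative — together with the bookkeeping of which weight ratios stay bounded on $[0,T]$. Everything else is a by now standard sequence of multiplications, integrations by parts and absorptions.
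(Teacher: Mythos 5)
Your proof of \eqref{des Proposition 5} is essentially the paper's: multiply the equations by $\hat{\rho}^{2}y$ and $\hat{\rho}^{2}p^{i}$, control the commutator coming from the time-derivative of the weight by the bound $|\hat{\rho}(\hat{\rho})_{t}|\leq C\rho_{0}^{2}$, split the source terms through $\hat{\rho}^{2}=\rho_{0}\rho_{1}$ and $\hat\rho\le C\rho_2$, absorb the first-order terms, and invoke \eqref{estimate for solution}; the paper handles the forward/backward structure by the reflection $t\mapsto T-t$ rather than by integrating the adjoint equations over $(t,T)$, but these are the same computation. For \eqref{des Proposition 6} your route is organized differently: you rescale to $\bar y=\rho_{1}y$, $\bar p^{i}=\rho_{1}p^{i}$ and feed the resulting system (whose sources are controlled by $|\rho_{1}'|\leq C\rho_{0}$, $\rho_1\le C\rho_2$ and \eqref{des Proposition 5}) into the abstract $H^{1}_{a}$--$H^{2}_{a}$ maximal-regularity estimate of Appendix \ref{appendix A}, whereas the paper performs the two weighted multiplications by $\rho_{1}^{2}y_{t}$ (resp.\ $\rho_{1}^{2}p^{i}_{t}$) and by $-\rho_{1}^{2}(a y_{x})_{x}$ (resp.\ $-\rho_{1}^{2}(a p^{i}_{x})_{x}$) directly on the unrescaled system, using $|(\rho_{1}^{2})_{t}|\leq C\hat{\rho}^{2}$ --- which is equivalent to your $|\rho_{1}'|\leq C\rho_{0}$ via $\hat{\rho}^{2}=\rho_{0}\rho_{1}$. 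The two arguments are interchangeable in content; your packaging buys a cleaner separation between the weight bookkeeping and the parabolic regularity theory (at the mild cost of having to justify $\bar p^{i}(T)=0$ through the decay $\rho_{1}^{2}\|p^{i}(t)\|^{2}\leq C\kappa_{0}(\zeta^{*})^{-2}\to0$, which you do correctly), while the paper's version keeps everything self-contained within three explicit energy identities. Both correctly identify the weight-derivative comparisons from \eqref{eq:compara_rhos} and the explicit formulas for $A^{*}$, $\zeta^{*}$ as the technical heart, and your claimed bounds check out.
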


\begin{proof}
We proceed following the steps of \cite{DemarqueLimacoViana_deg_eq2018}.
However, as in \cite{Joao_Juan_Suerlan-25}, we notice that the system \eqref{eq:linearized_system} has its first equation acting forward in time and its other two backward in time. Therefore, to establish estimates \eqref{des Proposition 5} and \eqref{des Proposition 6} we need to change variables in the form $\tilde{p}^{i}(x,t)=p^{i}(x,T-t)$ and $ \tilde{H}_{i}(x,t)=H_{i}(x,T-t)$, with $i=\lbrace 1,2\rbrace$, so that we obtain a system with new variables $(\tilde {y},\tilde{p}^{i},\tilde{H}_{i})$ in which all its equations are forward in time. In this way, the new system have initial conditions $(\tilde{y}(\cdot,0),\tilde{p}^{1}(\cdot,0),\tilde{p}^{2}(\cdot, 0 ))=(y^{0},0,0)$ and the estimates with weights for the solution, as well as regularity results, will be equivalent to those of the system \eqref{eq:linearized_system}. Keeping this in mind, for simplicity, we will maintain the notations of \eqref{eq:linearized_system}, for $i=1,2$,
\begin{equation}
\label{eq:linearized_system new}
\left\{\begin{aligned}
&y_t - \frac{1}{\ell(t)^2}\left(a(x) y_x\right)_x - \frac{\ell'(t)}{\ell(t)}x y_x 
+ c(x,t)y + d(x,t)\sqrt{a}y_x = h 1_{O} -\frac{1}{\mu_1} p^1 1_{O_1} - \frac{1}{\mu_2} p^2 1_{O_2} + H & \text{in } Q, \\
&p_t^i - \frac{1}{\ell(t)^2} \left(a(x) p_x^i \right)_x + \frac{\ell'(t)}{\ell(t)}(x p^i)_x + c(x,t)p^i -d(x,t)\sqrt{a} p^i_x = \alpha_i y 1_{O_{i,d}} + H_i  & \text{in } Q,\\
&p^i(0,t) = p^i(1,t) = 0 & \text{on}\ (0,T), \\
&p^i(\cdot,0) = 0 & \text{in } \Omega, \\
&y(0,t)=y(1,t)=0 & \text{on}\ (0,T), \\
&y(\cdot,0) = y^0 & \text{in } \Omega,
\end{aligned}
\right.
\end{equation}
Or, more generally, 
\begin{equation}
\label{eq:linearized_system new}
\left\{\begin{aligned}
&y_t - b(t)\left(a(x) y_x\right)_x  
+ c_1(x,t)y + d_1(x,t)\sqrt{a}y_x = h 1_{O} -\frac{1}{\mu_1} p^1 1_{O_1} - \frac{1}{\mu_2} p^2 1_{O_2} + H & \text{in } Q, \\
&p_t^i - b(t) \left(a(x) p_x^i \right)_x + c_2(x,t)p^i + d_2(x,t)\sqrt{a} p^i_x = \alpha_i y 1_{O_{i,d}} + H_i  & \text{in } Q,\\
&p^i(0,t) = p^i(1,t) = 0 & \text{on}\ (0,T), \\
&p^i(\cdot,0) = 0 & \text{in } \Omega, \\
&y(0,t)=y(1,t)=0 & \text{on}\ (0,T), \\
&y(\cdot,0) = y^0 & \text{in } \Omega,
\end{aligned}
\right.
\end{equation}
where the coefficients $b(t)=\frac{1}{\ell(t)^2}$,  $c_i$, $d_i$, $i=1,2$ are bounded.

Let us multiply $\eqref{eq:linearized_system new}_{1}$ by $\hat{\rho}^{2} y$, $\eqref{eq:linearized_system new}_{2}$ by $\hat{\rho}^{2} p^{i}$ and integrate over $[0,1]$. Hence, using that $\hat{\rho}^{2} = \rho_{0}\rho_{1}$, and $\rho_{1}\leq C\rho_{2}$, we compute
{
\begin{equation}\label{second estimate}
    \begin{array}{l}
     \dfrac{1}{2}\dfrac{d}{dt}\displaystyle\int_{0}^{1}\hat{\rho}^{2}|y|^{2}dx +\sum_{i=1}^{2}\dfrac{1}{2}\dfrac{d}{dt}\displaystyle\int_{0}^{1}\hat{\rho}^{2} | p^{i}|^{2}dx + b(t) \displaystyle\int_{0}^{1}\hat{\rho}^{2}a(x)|y_{x}|^{2} dx + b(t)  \sum_{i=1}^{2}\displaystyle\int_{0}^{1}\hat{\rho}^{2}a(x)|p_{x}^{i}|^{2}dx\hspace{0.1cm}\\
     + \displaystyle\int_0^1 \hat{\rho} d_1(x,t) \sqrt{a(x)}y_x y dx +\sum_{i=1}^{2} \int_0^1 \hat{\rho} d_2(x,t) \sqrt{a(x)}p^i_x p^i dx  \\
          \leq C\left(\displaystyle\int_{0}^{1}\rho_{0}^{2}|y|^{2}dx + \sum_{i=1}^{2}\displaystyle\int_{0}^{1}\rho_{0}^{2}|p^{i}|^{2}dx + \displaystyle\int_{O}\rho_{1}^{2}|h|^{2}dx + \displaystyle\int_{0}^{1}\rho_{2}^{2}|H|^{2}dx+
          \sum_{i=1}^{2}\displaystyle\int_{0}^{1}\rho_{2}^{2}|H_{i}|^{2}dx\right)\hspace{0.1cm}\\
          + \ \mathcal{M},
    \end{array}
\end{equation}}
where $\mathcal{M} = \displaystyle\int_{0}^{1}\hat{\rho}(\hat{\rho})_{t}|y|^{2}dx +\sum_{i=1}^{2}\displaystyle\int_{0}^{1}\hat{\rho}(\hat{\rho})_{t}|p^{i}|^{2}dx,$
and $(\cdot)_{t}=\frac{d}{dt}(\cdot)$. To facilitate notation, we will omit the sum sign. Recall that
$A^*(t) = C_1 \tau(t)$, and $\zeta^*(t) = C_2 \tau(t)$, then we have that $A^*_{t}=\bar C (\zeta^*)_{t}$ and consequently
\begin{equation*}
    \begin{split}
      \hat{\rho}(\hat{\rho})_{t} &= e^{-sA^*}(\zeta^*)^{-3}\left(-se^{-sA^*} A^*_{t}(\zeta^*)^{-3} -3 e^{-sA^*}(\zeta^*)^{-4}(\zeta^*)_{t}\right)    \\
       &= -e^{-2sA^*}(\zeta^*)^{-4}(\zeta^*)_{t}\left(s(\zeta^*)^{-2}\bar{C} + 3(\zeta^*)^{-3} \right)  \\
        &=-\rho_{0}^{2}(\zeta^*)_{t}\left(s(\zeta^*)^{-2}\bar{C} + 3(\zeta^*)^{-3} \right).
    \end{split}
\end{equation*}
Thus, for any $t\in [0,T)$,
\begin{equation*}
    \begin{array}{l}
        |\hat{\rho}(\hat{\rho})_{t}|\leq C\rho_{0}^{2}\tau^{2}|s(\zeta^*)^{-2}\bar{C} + 3(\zeta^*)^{-3}|  
         \leq C\rho_{0}^{2}|s\bar{C}+3(\zeta^*)^{-1}| \leq C\rho_{0}^{2},
    \end{array}
\end{equation*}
and we obtain
\begin{equation*}
    \mathcal{M}\leq C \displaystyle\int_{0}^{1}{\rho_{0}^{2}}(|y|^{2} +  |p^{i}|^{2})dx.
\end{equation*}

Therefore, using Young's inequality, \eqref{second estimate} becomes, for a small $\epsilon>0$, 
\begin{equation*}
    \begin{array}{l}
       \dfrac{1}{2}\dfrac{d}{dt}\displaystyle\int_{0}^{1}\hat{\rho}^{2}(|y|^{2}+| p^{i}|^{2})dx + b(t) \displaystyle\int_{0}^{1}\hat{\rho}^{2} a(x)(|y_{x}|^{2}+|p^{i}_{x}|^{2})dx\vspace{0.1cm}\\
          \leq C\left(\displaystyle\int_{0}^{1}\rho_{0}^{2}(|y|^{2} + |p^{i}|^{2})dx + \displaystyle\int_{O}\rho_{1}^{2}|h|^{2}dx + \displaystyle\int_{0}^{1}\rho_{2}^{2}(|H|^{2}+|H_{i}|^{2})dx\right)  \\
          + \epsilon \displaystyle\int_{0}^{1}\hat{\rho}^{2} a(x)(|y_{x}|^{2}+|p^{i}_{x}|^{2})dx + C_\epsilon \left( \int_0^1 |y|^2 + |p^i|^2 \right).
    \end{array}
\end{equation*}

Thus, since $b(t)$ is bounded and $\rho_0$ is bounded by bellow, there is constant $D>0$ such that
\begin{equation*}
    \begin{array}{l}
       \dfrac{1}{2}\dfrac{d}{dt}\displaystyle\int_{0}^{1}\hat{\rho}^{2}(|y|^{2}+| p^{i}|^{2})dx + \displaystyle\int_{0}^{1}\hat{\rho}^{2} a(x)(|y_{x}|^{2}+|p^{i}_{x}|^{2})dx\vspace{0.1cm}\\
          \leq D\left(\displaystyle\int_{0}^{1}\rho_{0}^{2}(|y|^{2} + |p^{i}|^{2})dx + \displaystyle\int_{O}\rho_{1}^{2}|h|^{2}dx + \displaystyle\int_{0}^{1}\rho_{2}^{2}(|H|^{2}+|H_{i}|^{2})dx\right)
    \end{array}
\end{equation*}
and, integrating in time, we conclude \eqref{des Proposition 5}.


Now, to prove \eqref{des Proposition 6}, multiply $\eqref{eq:linearized_system new}_{1}$ by $\rho_{1}^{2} y_{t}$ and $\eqref{eq:linearized_system new}_{2}$ by $\rho_{1}^{2} p^{i}_{t}$ and integrate over $[0,1]$. Thus, we get

\begin{equation}\label{third estimate}
    \begin{array}{l}
\displaystyle\int_{0}^{1}\rho_{1}^{2}(|y_{t}|^{2}+|p^{i}_{t}|^{2})dx + \dfrac{1}{2} b(t)
 \displaystyle\int_{0}^{1}\rho_{1}^{2} a(x) \dfrac{d}{dt}(|y_{x}|^{2}+|p^{i}_{x}|^{2})dx  \vspace{0.1cm}\\
 + \displaystyle\int_0^1 \hat{\rho} c_1(x,t) y y_t dx +\sum_{i=1}^{2} \int_0^1 \hat{\rho} c_2(x,t) p^i p^i_t dx
 + \int_0^1 \hat{\rho} d_1(x,t) \sqrt{a}y_x y_t dx +\sum_{i=1}^{2} \int_0^1 \hat{\rho} d_2(x,t) \sqrt{a}p^i_x p^i_t dx  \\
         \leq C\left(\displaystyle\int_{0}^{1}\rho_{0}^{2}(|y|^{2} + |p^{i}|^{2})dx + \displaystyle\int_{O}\rho_{1}^{2}|h|^{2}dx + \displaystyle\int_{0}^{1}\rho_{2}^{2}(|H|^{2}+|H_{i}|^{2})dx\right) \vspace{0.1cm}\\
        + \ \dfrac{1}{4}\displaystyle\int_{0}^{1}\rho_{1}^{2}(|y_{t}|^{2}+|p^{i}_{t}|^{2})dx.
    \end{array}
\end{equation}
Thus, using that $\hat\rho = \rho_0 \rho_1$, Young's inequality and the boundedness of $c_i$ and $d_i$, we get
\begin{equation}\label{third estimate}
    \begin{array}{l}
\displaystyle\int_{0}^{1}\rho_{1}^{2}(|y_{t}|^{2}+|p^{i}_{t}|^{2})dx + \dfrac{1}{2} b(t) \dfrac{d}{dt}\displaystyle\int_{0}^{1}\rho_{1}^{2} a(x)(|y_{x}|^{2}+|p^{i}_{x}|^{2})dx  \vspace{0.1cm}\\
         \leq D\left(\displaystyle\int_{0}^{1}\rho_{0}^{2}(|y|^{2} + |p^{i}|^{2})dx + \displaystyle\int_{O}\rho_{1}^{2}|h|^{2}dx + \displaystyle\int_{0}^{1}\rho_{2}^{2}(|H|^{2}+|H_{i}|^{2})dx\right) \vspace{0.1cm}\\
        + \ \dfrac{1}{2}\displaystyle\int_{0}^{1}\rho_{1}^{2}(|y_{t}|^{2}+|p^{i}_{t}|^{2})dx + |\widetilde{\mathcal{M}}|,
    \end{array}
\end{equation}
where $\widetilde{\mathcal{M}}= \dfrac{1}{2}\displaystyle\int_{0}^{1}(\rho_{1}^{2})_{t}\, a(x)(|y_{x}|^{2}+|p^{i}_{x}|^{2})dx$.
Since $|\zeta_{t}|\leq C\zeta^{2}$,  
we have that $|(\rho^{2}_{1})_{t}|\leq C\hat{\rho}^{2}$. 
Hence, $$|\widetilde{\mathcal{M}}| \leq C\displaystyle\int_{0}^{1}\hat{\rho}^{2}\, a(x)(|y_{x}|^{2}+|p^{i}_{x}|^{2})dx.$$

Thus, \eqref{third estimate} gives 
\begin{equation*}
    \begin{array}{l}
\dfrac{1}{2}\displaystyle\int_{0}^{1}\rho_{1}^{2}(|y_{t}|^{2}+|p^{i}_{t}|^{2})dx + \dfrac{1}{2} b(t)  \dfrac{d}{dt}\displaystyle\int_{0}^{1}\rho_{1}^{2} a(x)(|y_{x}|^{2}+|p^{i}_{x}|^{2})dx  \vspace{0.1cm}\\
         \leq D\left(\displaystyle\int_{0}^{1}\rho_{0}^{2}(|y|^{2} + |p^{i}|^{2})dx + \displaystyle\int_{O}\rho_{1}^{2}|h|^{2}dx + \displaystyle\int_{0}^{1}\rho_{2}^{2}(|H|^{2}+|H_{i}|^{2})dx\right) \vspace{0.1cm}\\
        \, + \, C\displaystyle\int_{0}^{1}\hat{\rho}^{2}\, a(x)(|y_{x}|^{2}+|p^{i}_{x}|^{2})dx.
    \end{array}
\end{equation*}
 Integrating the previous inequality from $0$ to $t$ and using the boundedness of $b(t)$ and the first estimate, \eqref{des Proposition 5}, we arrive at
 \begin{equation}\label{primeira da des proposição 6}
     \begin{array}{c}
\displaystyle\sup_{[0,T]}(\rho_{1}^{2}\|\sqrt{a}y_{x} \|^{2}_{L^{2}(0,1)})+ \displaystyle\sup_{[0,T]}(\rho_{1}^{2}\|\sqrt{a}p^{i}_{x} \|^{2}_{L^{2}(0,1)}) + \displaystyle\int_{Q}\rho_{1}^{2}(|y_{t}|^{2}+|p^{i}_{t}|^{2})dxdt\\
\leq C \kappa_{1}(H,H_{1},H_{2},y_{0}).
     \end{array}
 \end{equation}


Finally, to conclude \eqref{des Proposition 6}, it remains to estimate $\int_{Q}\rho_{1}^{2}(|(a(x)y_{x})_{x}|^{2} + |(a(x)p^{i}_{x})_{x}|^{2})dxdt$. To do this, it is enough to multiply $\eqref{eq:linearized_system new}_{1}$ by $-\rho_{1}^{2}(a(x)y_{x})_{x}$ and $\eqref{eq:linearized_system new}_{2}$ by $-\rho_{1}^{2}(a(x)p^{i}_{x})_{x}$, integrate over $[0,1]$ as before. We get
\begin{equation*}
\begin{split}
\frac{1}{2}\int_0^1 \rho_1^2 a(x) \frac{d}{dt} (|y_x|^2 + |p_x^i|^2) dx + b(t) \int_0^1 \rho_1^2 \left( |(a y_x)_x|^2 + |(a p^i_x)_x|^2 \right) \\
- \int_0^1 \rho_1^2 \left( (a y_x)_x c_1 y + (a p^i_x))_x c_2 p^i \right) - \int_0^1 \rho_1^2 \left( (a y_x)_x d_1 \sqrt{a} y_x + (a p^i_x)_x d_2 \sqrt{a} p^i_ x \right) \\
\leq C \left( \int_0^1 \rho_1^2 \left( (|h| + |p^i| + |H)|(a y_x)_x| + (|y| + |H_i|)|(a p^i_x)_x| \right) \right)
\end{split}    
\end{equation*}
Thus, for a small $\epsilon>0$, 
\begin{equation*}
    \begin{array}{l}
         \dfrac{1}{2}\dfrac{d}{dt}\displaystyle\int_{0}^{1}\rho_{1}^{2} \,a(x)(|y_{x}|^{2}+|p^{i}_{x}|^{2})dx + b(t) \displaystyle\int_{0}^{1}\rho_{1}^{2}(|(a(x)y_{x})_{x}|^{2} + |(a(x)p^{i}_{x})_{x}|^{2} )dx\vspace{0.1cm}\\
         \leq C_\epsilon\left(\displaystyle\int_{0}^{1}\rho_{1}^{2}(|y|^{2} + |p^{i}|^{2})dx + \displaystyle\int_{O}\rho_{1}^{2}|h|^{2}dx + \displaystyle\int_{0}^{1}\rho_{1}^{2}(|H|^{2}+|H_{i}|^{2})dx\right) \vspace{0.1cm}
         \\
         + \displaystyle \epsilon \int_{0}^{1}\rho_{1}^{2}\left(|(a y_x)_x|^{2} + |(a p^i_x)_x|^{2}\right)dx
         + \displaystyle \epsilon \int_{0}^{1}\rho_{1}^{2}\left(|(a y_x)_x|^{2} + |(a p^i_x)_x|^{2}\right)dx \\
         + C_\epsilon \int_{0}^{1}\rho_{1}^{2}\left( |c_1 y|^2 + |c_2 p^i|^2 \right) + |\mathcal{N}|,
    \end{array}
\end{equation*}
where $|\mathcal{N}| = \dfrac{1}{2}\displaystyle\int_{0}^{1}(\rho_{1}^{2})_{t}\, a(x)(|y_{x}|^{2}+|p^{i}_{x}|^{2})dx$.
Since $|\zeta_{t}|\leq C\zeta^{2}$,  
we have that $|(\rho^{2}_{1})_{t}|\leq C\hat{\rho}^{2}$. 
Hence, $$|\mathcal{N}| \leq C\displaystyle\int_{0}^{1}\hat{\rho}^{2}\, a(x)(|y_{x}|^{2}+|p^{i}_{x}|^{2})dx.$$ 

Thus, since $b(t)$,  $c_i$ and $d_i$ are bounded, and $\rho_1 \leq C\rho_0 \leq C\rho_2$, we get, for some $D>0$,
\begin{equation*}
    \begin{array}{l}
         \dfrac{1}{2}\dfrac{d}{dt}\displaystyle\int_{0}^{1}\rho_{1}^{2} \,a(x)(|y_{x}|^{2}+|p^{i}_{x}|^{2})dx + \dfrac{1}{2}\displaystyle\int_{0}^{1}\rho_{1}^{2}(|(a(x)y_{x})_{x}|^{2} + |(a(x)p^{i}_{x})_{x}|^{2} )dx\vspace{0.1cm}\\
         \leq D\left(\displaystyle\int_{0}^{1}\rho_{0}^{2}(|y|^{2} + |p^{i}|^{2})dx + \displaystyle\int_{O}\rho_{1}^{2}|h|^{2}dx + \displaystyle\int_{0}^{1}\rho_{2}^{2}(|H|^{2}+|H_{i}|^{2})dx\right) \vspace{0.1cm}\\
         +\, D\displaystyle\int_{0}^{1}\hat{\rho}^{2}\, a(x)(|y_{x}|^{2}+|p^{i}_{x}|^{2})dx 
    \end{array}
\end{equation*}

Therefore, integrating over time and using estimates \eqref{des Proposition 5} and \eqref{primeira da des proposição 6}  we obtain
\begin{equation}\label{segunda da des proposição 6}
    \begin{array}{l}
\displaystyle\int_{Q}\rho_{1}^{2}(|(a(x)y_{x})_{x}|^{2} + |(a(x)p^{i}_{x})_{x}|^{2})dxdt  \leq C \kappa_{1}(H,H_{1},H_{2},y_{0}).     \end{array}
\end{equation}
From \eqref{primeira da des proposição 6} and \eqref{segunda da des proposição 6} we infer \eqref{des Proposition 6}.
\end{proof}
   
\section{Local null controllability of the nonlinear system}
\label{sec:control for nonlinear system}

All along this section we use the weights defined in \eqref{eq:weights_rhos} and Liusternik's (right) inverse function theorem (see \cite{Alekseev}) to obtain our results. Here $B_{r}(0)$ and $B_{\delta}(\zeta_{0})$ are open balls of radius $r$ and $\delta$, centered at $0$ and $\zeta_0$, respectively.
\begin{teo}[Liusternik]\label{Liusternik}
Let $ \mathcal{Y}$ and $ \mathcal{Z}$ be Banach spaces and let $\mathcal{A}:B_{r}(0)\subset  \mathcal{Y}\rightarrow  \mathcal{Z}$ be a $\mathcal{C}^{1}$ mapping. Let as assume that $\mathcal{A}^{\prime}(0)$ is onto and let us set $\mathcal{A}(0)=\zeta_{0}$. Then, there exist $\delta >0$, a mapping $W: B_{\delta}(\zeta_{0})\subset  \mathcal{Z}\rightarrow  \mathcal{Y}$ and a constant $K>0$ such that
\begin{equation*}
    W(z)\in B_{r}(0),\,\, \mathcal{A}(W(z))=z\,\, \text{and}\,\, \Vert W(z)\Vert_{ \mathcal{Y}}\leq K\Vert z-\zeta_{0}\Vert_{ \mathcal{Z}}\, \, \forall\, z\in B_{\delta}(\zeta_{0}).
\end{equation*}
In particular, $W$ is a local inverse-to-the-right of $\mathcal{A}$.
\end{teo}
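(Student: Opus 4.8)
The plan is to prove Liusternik's theorem by the classical Lyusternik modified–Newton iteration, whose linear step is supplied by the open mapping theorem. First I would normalize: replacing $\mathcal{A}$ by $y\mapsto\mathcal{A}(y)-\zeta_{0}$ we may assume $\mathcal{A}(0)=0$, so that $z-\zeta_{0}$ is simply $z$. Write $L:=\mathcal{A}'(0)$, a bounded surjective linear map $\mathcal{Y}\to\mathcal{Z}$. By the open mapping (Banach--Schauder) theorem there are a constant $c>0$ and a map $R:\mathcal{Z}\to\mathcal{Y}$ (in general neither linear nor continuous) with $LR(z)=z$ and $\|R(z)\|_{\mathcal{Y}}\le c\|z\|_{\mathcal{Z}}$ for every $z$; I fix such an $R$ once and for all. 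Since $\mathcal{A}$ is $\mathcal{C}^{1}$, I would choose $\rho\in(0,r)$ with $\|\mathcal{A}'(y)-L\|\le \frac{1}{2c}$ (operator norm) whenever $\|y\|_{\mathcal{Y}}\le\rho$, and set $\delta:=\rho/(2c)$.

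Next, for $z\in B_{\delta}(0)$ I would define $y_{0}=0$ and $y_{n+1}=y_{n}+h_{n}$ with $h_{n}:=R\bigl(z-\mathcal{A}(y_{n})\bigr)$, and prove \emph{simultaneously by induction} the two bounds $\|y_{n}\|_{\mathcal{Y}}<2c\|z\|_{\mathcal{Z}}<\rho$ and $\|z-\mathcal{A}(y_{n})\|_{\mathcal{Z}}\le 2^{-n}\|z\|_{\mathcal{Z}}$. The inductive step uses $Lh_{n}=z-\mathcal{A}(y_{n})$ together with the integral mean value identity
\begin{equation*}
z-\mathcal{A}(y_{n+1})=-\int_{0}^{1}\bigl(\mathcal{A}'(y_{n}+sh_{n})-L\bigr)h_{n}\,ds,
\end{equation*}
which is legitimate because $y_{n}$ and $y_{n+1}$, hence the whole segment between them, lie in the convex set $B_{\rho}(0)$; this yields $\|z-\mathcal{A}(y_{n+1})\|_{\mathcal{Z}}\le\frac{1}{2c}\|h_{n}\|_{\mathcal{Y}}\le\frac12\|z-\mathcal{A}(y_{n})\|_{\mathcal{Z}}$, whence $\|h_{n}\|_{\mathcal{Y}}\le c\,2^{-n}\|z\|_{\mathcal{Z}}$, and by the telescoping estimate $\|y_{n+1}\|_{\mathcal{Y}}\le\sum_{k=0}^{n}\|h_{k}\|_{\mathcal{Y}}\le c\|z\|_{\mathcal{Z}}\sum_{k\ge 0}2^{-k}=2c\|z\|_{\mathcal{Z}}<\rho$, so the induction closes.

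These bounds show that $(y_{n})$ is Cauchy in $\mathcal{Y}$; its limit $W(z)\in B_{r}(0)$ satisfies $\mathcal{A}(W(z))=z$ (let $n\to\infty$ using continuity of $\mathcal{A}$ and $\|z-\mathcal{A}(y_{n})\|_{\mathcal{Z}}\to 0$) and $\|W(z)\|_{\mathcal{Y}}\le 2c\|z\|_{\mathcal{Z}}=:K\|z-\zeta_{0}\|_{\mathcal{Z}}$ after undoing the normalization, so $W:B_{\delta}(\zeta_{0})\to B_{r}(0)$ is the desired inverse-to-the-right. I expect the main obstacle to be the self-consistency (bookkeeping) of the scheme: $\delta$ must be taken small enough in terms of both $r$ and the modulus of continuity of $\mathcal{A}'$ at $0$ so that every iterate — and every point on each segment $[y_{n},y_{n+1}]$ — stays in $B_{\rho}(0)$, which is exactly what makes the residual contract; closing this induction is the only delicate point. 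A secondary subtlety is that the linear solve is merely a bounded selection $R$ (a genuinely continuous linear right inverse of $L$ would require $\ker L$ to be complemented in $\mathcal{Y}$), which suffices to run the iteration but means $W$ is well defined only once $R$ has been fixed.
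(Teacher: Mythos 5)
Your proof is correct. The paper does not prove this theorem at all --- it is quoted verbatim as a known result with a citation to Alekseev--Tikhomirov--Fomin \cite{Alekseev} --- so there is no argument in the paper to compare against; what you have written is the standard Graves--Lyusternik proof that the cited reference itself uses, namely a modified Newton iteration with the frozen linear operator $L=\mathcal{A}'(0)$, a bounded (generally nonlinear, discontinuous) right-inverse selection $R$ furnished by the open mapping theorem, and the integral mean-value identity to contract the residual. Your bookkeeping closes correctly: the choice $\delta=\rho/(2c)$ guarantees $\sum_k\|h_k\|\le 2c\|z\|<\rho$, so every iterate and every segment $[y_n,y_{n+1}]$ stays inside $B_\rho(0)\subset B_r(0)$ where both $\mathcal{A}$ is defined and the bound $\|\mathcal{A}'(\cdot)-L\|\le\frac{1}{2c}$ applies, and the geometric decay of the residual then yields convergence, the identity $\mathcal{A}(W(z))=z$, and the estimate $\|W(z)\|\le 2c\|z-\zeta_0\|$. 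Your two flagged subtleties are exactly the right ones; the only cosmetic remark is that the strict inequality $\|y_n\|<2c\|z\|$ degenerates when $z=\zeta_0$, a case one disposes of separately by setting $W(\zeta_0)=0$.
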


 In order to do so, we define a map $\mathcal{A} : \mathcal{Y} \to \mathcal{Z}$ between suitable Banach spaces $\mathcal{Y}$ and $\mathcal{Z}$ whose definition and properties came from the controllability result of the linearized system and the additional estimates shown in Theorem \ref{theorem case linear} and Proposition \ref{addicional_estimates_case_linear}  in order to verify that the map $\mathcal{A}$ is well defined and verifies Liusternik's theorem hypothesis.

From the linearized system \eqref{eq:linearized_system}, we denote
\begin{equation*}
    \begin{split}
        H=&y_t - b(t) (a(x) y_x)_x - B(t) c(x)\sqrt{a}y_x + D_1 F(0,0)y + C(t)D_2 F(0,0) \beta(x) y_x \\
        &- h 1_{O} + \frac{1}{\mu_1} p^1 1_{O_1} + \frac{1}{\mu_2} p^2 1_{O_2},      
    \end{split}
\end{equation*}
\begin{equation*}
    \begin{split}
        H_{i} =& -p_t^i - b(t) \left(a(x) p_x^i \right)_x + B(t) (c(x)\sqrt{a}p^i)_x + D_1 F(0,0) p^i 
        - C(t)\left(D_2 F(0,0) \beta(x) p^i \right)_x \\
        &- \alpha_i y 1_{O_{i,d}}, \quad i=1,2.      
    \end{split}
\end{equation*}

Let us define the space
\begin{equation}
\label{eq:espaceY}
\begin{array}{c}
\mathcal{Y} = \{  (y,p^1,p^2,h)\in [L^{2}(\Omega\times (0,T))]^{3}\times L^{2}( O\times (0,T)) \ : \ y(\cdot,t), p^{1}(\cdot,t), p^{2}(\cdot,t)\\ \text{are absolutely continuous in}\ [0, 1],\ \text{a.e. in}\ [0, T], \ \rho_{1}h\in L^{2}( O \times (0,T)), \\
 \rho_{0}y, \rho_{0}p^{i}, \rho_2 H \in L^2(Q),
 \rho_2H_{i} \in L^2(Q), \\
 y(1, t) \equiv p^{1}(1,t) \equiv  p^{2}(1,t)\equiv y(0,t) \equiv p^{1}(0,t) \equiv p^{2}(0,t)\equiv 0 \ \text{a.e in}\ [0, T],\\
 y(\cdot,0) \in H_a^1(\Omega) \}.
\end{array}
\end{equation}

Thus, $\mathcal{Y}$ is a Hilbert space for the norm $\Vert .\Vert_{\mathcal{Y}}$, where
\begin{equation*}
    \begin{array}{lll}
          \Vert (y,p^1,p^2,h)\Vert^{2}_{\mathcal{Y}} &=& \Vert \rho_{0}y\Vert^{2}_{L^{2}(Q)} + \Vert \rho_{0}p^{i}\Vert^{2}_{L^{2}(Q)} + \Vert\rho_{1}h\Vert^{2}_{L^{2}(O\times (0,T))} + \Vert\rho_{2} H\Vert^{2}_{L^{2}(Q)}\\
          &\quad &+\, \Vert\rho_{2} H_{i}\Vert^{2}_{L^{2}(Q)} + \Vert y(\cdot,0)\Vert^{2}_{H^{1}_{a}(\Omega)}.
    \end{array}
\end{equation*}

Due to Proposition \ref{addicional_estimates_case_linear}, for any $(y,p^1,p^2,h)\in \mathcal{Y}$ we have:
\begin{equation}\label{eq:estimativas_total}
    \begin{array}{c}
\displaystyle\sup_{[0,T]}(\hat{\rho}^{2}\|y\|^{2}_{L^{2}(0,1)}) + \displaystyle\sup_{[0,T]}(\hat{\rho}^{2}\|p^{1}\|^{2}_{L^{2}(0,1)}) + \displaystyle\sup_{[0,T]}(\hat{\rho}^{2}\|p^{2}\|^{2}_{L^{2}(0,1)})\vspace{0.1cm}
+\displaystyle\int_{Q}\hat{\rho}^{2} a(x)(|y_{x}|^{2} + |p^{1}_{x}|^{2}+|p^{2}_{x}|^{2})dxdt \\
+ \displaystyle\sup_{[0,T]}(\rho_{1}^{2}\|\sqrt{a}y_{x} \|^{2}_{L^{2}(0,1)})+ \displaystyle\sup_{[0,T]}(\rho_{1}^{2}\|\sqrt{a}p^{1}_{x} \|^{2}_{L^{2}(0,1)}) + \displaystyle\sup_{[0,T]}(\rho_{1}^{2}\|\sqrt{a}p^{2}_{x} \|^{2}_{L^{2}(0,1)})\\
+ \displaystyle\int_{Q}\rho_{1}^{2}(|y_{t}|^{2}+|p^{1}_{t}|^{2}+|p^{2}_{t}|^{2}+|(a(x)y_{x})_{x}|^{2} + |(a(x)p^{1}_{x})_{x}|^{2}+ |(a(x)p^{2}_{x})_{x}|^{2})dxdt
\leq C \Vert(y,p^1,p^2,h)\Vert^{2}_{\mathcal{Y}}. 
    \end{array}
\end{equation}

Now, let us introduce the Banach space $\mathcal{Z} = \mathcal{F} \times \mathcal{F} \times \mathcal{F} \times H_a^1(\Omega)$ such that 
$$\mathcal{F}=\{ z \in L^2(Q) \ : \ \rho_2 z \in L^2(Q) \}.$$

Finally, consider the map $\mathcal{A} : \mathcal{Y} \to \mathcal{Z}$ such that
$(y,p^1,p^2,h) \mapsto \mathcal{A}(y,p^1,p^2,h) = (\mathcal{A}_0, \mathcal{A}_1, \mathcal{A}_2, \mathcal{A}_3)$ where the components $\mathcal{A}_i$, $i=0,...,3$, are given by
\begin{equation}\label{aplicação A}
\left\{\begin{array}{ll}
\mathcal{A}_0(y,p^1,p^2,h) =& y_t - b(t) \left( a(x) y_x \right)_x - B(t) c(x)\sqrt{a}y_x + F(y,C(t)\beta(x)y_x) \\
&- h 1_{O}  + \frac{1}{\mu_1} p^1 1_{O_1} + \frac{1}{\mu_2} p^2 1_{O_2},\\ 
\mathcal{A}_i(y,p^1,p^2,h) =& -p_t^i - b(t) \left( a(x) p_x^i \right)_x + B(t) \left( c(x)\sqrt{a}p^i \right)_x + D_1 F(y,C(t)\beta(x)y_x) p^i \\
&-C(t)\left( D_2 F(y,C(t)\beta(x) y_x)\beta(x) p^i \right)_x- \alpha_i (y-y_{i,d}) 1_{O_{i,d}},\,\,\text{for i=1, 2}, \\
\mathcal{A}_3(y,p^1,p^2,h) =& y(.,0).
\end{array}\right.
\end{equation}

Therefore, we will prove that we can apply this Theorem \ref{Liusternik} to the mapping $\mathcal{A}$ in \eqref{aplicação A}, through the following three lemmas:
\begin{lema}\label{A bem definido}
    Let $\mathcal{A}: \mathcal{Y}\rightarrow  \mathcal{Z}$ be given by \eqref{aplicação A}. Then, $\mathcal{A}$ is well defined and continuous. 
\end{lema}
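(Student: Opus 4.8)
The plan is to split $\mathcal{A}$ into the affine part coming from the linearized operator of \eqref{eq:linearized_system} plus a genuinely nonlinear remainder, and to estimate each piece in $\mathcal{Z}$ using the a priori bounds \eqref{eq:estimativas_total}. Using the second-order Taylor expansion $F(s,r)=F(0,0)+D_1F(0,0)s+D_2F(0,0)r+\mathcal{R}(s,r)$ — so that, by the boundedness of the first and second derivatives of $F$, $|\mathcal{R}(s,r)|\le K(|s|^2+|r|^2)$ and $|D_jF(s,r)-D_jF(0,0)|\le K(|s|+|r|)$ — and the normalization $F(0,0)=0$, one checks that
\begin{align*}
\mathcal{A}_0(y,p^1,p^2,h)&=H+\mathcal{R}\big(y,C(t)\beta(x)y_x\big),\\
\mathcal{A}_i(y,p^1,p^2,h)&=H_i+\alpha_i y_{i,d}1_{O_{i,d}}+\big(D_1F(y,C\beta y_x)-D_1F(0,0)\big)p^i\\
&\quad -C\big(\big(D_2F(y,C\beta y_x)-D_2F(0,0)\big)\beta p^i\big)_x,\qquad i=1,2,
\end{align*}
and $\mathcal{A}_3(y,p^1,p^2,h)=y(\cdot,0)$. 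By the very definition \eqref{eq:espaceY} of $\mathcal{Y}$ we have $\rho_2H,\rho_2H_i\in L^2(Q)$ and $y(\cdot,0)\in H^1_a(\Omega)$, with norms bounded by $\|(y,p^1,p^2,h)\|_{\mathcal{Y}}$, while $\rho_2 y_{i,d}1_{O_{i,d}}$ is a fixed element of $L^2(Q)$ by the hypothesis on $y_{i,d}$; hence $\mathcal{A}_3$ is linear and well defined, and it remains to bound $\rho_2$ times each nonlinear remainder in $L^2(Q)$ by a polynomial in $\|(y,p^1,p^2,h)\|_{\mathcal{Y}}$.

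For the nonlinear remainders I would combine three ingredients: the estimates \eqref{eq:estimativas_total} of Proposition \ref{addicional_estimates_case_linear} ($\sup_{[0,T]}\hat{\rho}^2\|\cdot\|_{L^2(0,1)}^2$, $\sup_{[0,T]}\rho_1^2\|\sqrt a(\cdot)_x\|_{L^2(0,1)}^2$ and $\int_Q\rho_1^2(|\cdot_t|^2+|(a(\cdot)_x)_x|^2)$, all $\le C\|(y,p^1,p^2,h)\|_{\mathcal{Y}}^2$); the continuous embedding $H^1_a(0,1)\hookrightarrow L^\infty(0,1)$, which with the first two gives $\|y(t)\|_{L^\infty}+\|p^i(t)\|_{L^\infty}\le C\rho_1(t)^{-1}\|(y,p^1,p^2,h)\|_{\mathcal{Y}}$; and the relations \eqref{eq:compara_rhos}, especially $\rho_1\le C\hat{\rho}\le C\rho_0$, $\hat{\rho}^2=\rho_0\rho_1$ and $\rho_2\le C\rho_1^2$, which make the weight products $\rho_2^2\hat{\rho}^{-4}$, $\rho_2^2\rho_1^{-4}$ and $\rho_2^2\rho_1^{-2}\hat{\rho}^{-2}$ bounded on $[0,T]$. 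For instance $|\mathcal{R}(y,C\beta y_x)|\le C(|y|^2+a|y_x|^2)$ (using $\beta^2\le a$ and the boundedness of $C(t)$), whence $\int_Q\rho_2^2|y|^4\le\int_0^T\rho_2^2\|y(t)\|_{L^\infty}^2\|y(t)\|_{L^2(0,1)}^2\,dt\le C\|(y,p^1,p^2,h)\|_{\mathcal{Y}}^4$, and the $a|y_x|^2$ and cross contributions go the same way, so $\|\rho_2\mathcal{R}(y,C\beta y_x)\|_{L^2(Q)}\le C\|(y,p^1,p^2,h)\|_{\mathcal{Y}}^2$. In $\mathcal{A}_i$ the difference structure $D_jF(y,C\beta y_x)-D_jF(0,0)$ supplies the extra factor $|y|+\sqrt a|y_x|$, so each $\rho_2$ is paid for by two factors $\rho_1^{-1}$ together with $\rho_2\le C\rho_1^2$.

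The main obstacle is the quasilinear, divergence-form term $-C\big(\big(D_2F(y,C\beta y_x)-D_2F(0,0)\big)\beta p^i\big)_x$, which must first be seen to be an honest $L^2$ function. Differentiating, besides the harmless pieces $(D_2F-D_2F(0,0))\beta_x p^i$, $(D_2F-D_2F(0,0))\beta p^i_x$ and $D^2_{21}F\,y_x\beta p^i$, one is left with the critical term $C\,D^2_{22}F\,(\beta_x y_x+\beta y_{xx})\,\beta p^i$. Here I would exploit \eqref{eq:cond_beta} and $xa'(x)\le K a(x)$: since $\beta_x\le M$ and $\beta\le\sqrt a$, $|\beta\beta_x y_x|\le C|\sqrt a\,y_x|$, while $\beta^2 y_{xx}=\frac{\beta^2}{a}\big((a y_x)_x-a'y_x\big)$ with $\frac{\beta^2}{a}\le1$ reduces the second-order contribution to $(a y_x)_x$ (controlled in $L^2(Q)$ with weight $\rho_1$) plus lower-order terms, handled through $ay_x\in H^1(0,1)\hookrightarrow L^\infty(0,1)$ (as $y(t)\in H^2_a(0,1)$ a.e.), the boundedness of $x/\sqrt a$, and $1/\sqrt a\in L^2(0,1)$ (valid because $\alpha\in(0,1)$). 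Pairing with $p^i\in L^\infty$ (weight $\rho_1^{-1}$) and invoking $\rho_2\le C\rho_1^2$ once more to absorb the singular weights, one obtains $\|\rho_2\mathcal{A}_i(\cdot)\|_{L^2(Q)}\le C(1+\|(y,p^1,p^2,h)\|_{\mathcal{Y}})\|(y,p^1,p^2,h)\|_{\mathcal{Y}}$, so $\mathcal{A}$ is well defined.

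Continuity then follows by running the same estimates on the differences $\mathcal{A}(u)-\mathcal{A}(\bar u)$: the affine part is Lipschitz with constant $1$ in $\|\cdot\|_{\mathcal{Y}}$ by construction, and for the nonlinear remainders the mean value theorem together with the boundedness of $\nabla F$ and $D^2F$ yields $\|\mathcal{A}(u)-\mathcal{A}(\bar u)\|_{\mathcal{Z}}\le C(1+\|u\|_{\mathcal{Y}}+\|\bar u\|_{\mathcal{Y}})\,\|u-\bar u\|_{\mathcal{Y}}$, i.e. $\mathcal{A}$ is locally Lipschitz, hence continuous.
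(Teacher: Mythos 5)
Your proposal is correct and follows essentially the same route as the paper: split off the affine part $(H,H_i,\alpha_i y_{i,d}1_{O_{i,d}},y(\cdot,0))$, bound the nonlinear remainders via the mean value theorem and the bounded second derivatives of $F$, and close the estimates with \eqref{eq:estimativas_total}, the embedding $H^1_a\hookrightarrow L^\infty$, the conditions \eqref{eq:cond_beta} (reducing $\beta$-terms to $\sqrt a\,y_x$ and $(ay_x)_x$, exactly the role of the paper's inequalities \eqref{eq:est_beta1}--\eqref{eq:est_beta2}), and the weight comparisons \eqref{eq:compara_rhos}. The only difference is that you also spell out continuity via a local Lipschitz bound, whereas the paper's proof stops at well-definedness and effectively defers continuity to the $C^1$ argument of the next lemma.
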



\begin{proof}
	We want to show that $\mathcal{A}(y,p^{1},p^{2},h)$ belongs to $ \mathcal{Z}$, for every $(y,p^{1},p^{2},h)\in  \mathcal{Y}$. We will therefore show that each $\mathcal{A}_{i}(y,p^{1},p^{2},h)$, with $i=\lbrace 0,1, 2, 3\rbrace$, defined in \eqref{aplicação A} belongs to its respective space. Notice that, 
	\begin{equation*}
		\begin{array}{lll}
			\|\mathcal{A}_{0}(y,p^{1},p^{2},h)\|^{2}_{\mathcal{F}}&\leq & 2\displaystyle\int_{Q}\rho^{2}_{2}|H|^{2}dxdt \\
            &&+ 2\displaystyle\int_{Q}\rho^{2}_{2}\left|F(y,C(t)\beta(x)y_x) -D_1 F(0,0)y - C(t)D_2 F(0,0)\beta(x)y_x \right|^{2}dxdt \\
			&\quad & = 2 I_{1} + 2 I_{2}.
		\end{array}
	\end{equation*}

It is immediate by definition of the space $\mathcal{Y}$ that $I_{1}\leq C \|(y,p^{1},p^{2},h)\|^{2}_{\mathcal{Y}}$. Furthermore, 
using the mean value theorem, for some $\tilde \theta = \theta(x,t) \in (0,1)$, the boundedness of $C(t)$ and the properties of the weigths \eqref{eq:compara_rhos}, we have that  
\begin{equation*}
\begin{split}
I_2 &\leq \int_Q \rho_2^2 |\nabla F(\tilde\theta y, \tilde\theta C(t)\beta(x)y_x) - \nabla F(0,0)|^2 (|y|^2 + C(t)^2 |\beta(x)y_x|^2) dxdt \\
&\leq \int_Q \rho_2^2 \tilde\theta^2 (|y|^2 + C(t)^2 |\beta(x)y_x|^2) (|y|^2 + C(t)^2 |\beta(x)y_x|^2) \\
& \leq \int_Q \rho_1^4 (|y|^2 + |\beta(x)y_x|^2) (|y|^2 + |\beta(x)y_x|^2) \\
& \leq \int_0^T \sup_{x \in \Omega} \{ \hat \rho^2 |y|^2 + \rho_1^2 |\sqrt{a}y_x|^2 \} \int_\Omega (\hat\rho^2 |y|^2 + \rho_1^2 |\sqrt{a}y_x|^2)dx dt,
\end{split}    
\end{equation*}
where in the last inequality we used that $\beta(x)^2 \leq a(x)$. Therefore, using the continous immersion $H^1_a(\Omega) \subset L^\infty(\Omega)$ and the weight comparison properties \eqref{eq:compara_rhos}, we get 
\begin{equation*}
\begin{split}
I_2 &\leq \int_0^T \left( \| \hat\rho^2 y_x(t)\|^2 + \|\rho_1^2 (a y_x)_x(t) \|^2 \right) \int_\Omega (\rho_0^2 |y|^2 + \hat\rho^2 |\sqrt{a}y_x|^2)dx dt \\
&\leq \sup_{t \in [0,T]} \left( \hat\rho^2 \|y(t)\|^2 + \rho_1^2 \|(a y_x)_x(t) \|^2 \right) \int_Q  (\rho_0^2 |y|^2 + \hat\rho^2 |\sqrt{a}y_x|^2)dx dt \\
&\leq \Vert(y,p^1,p^2,h)\Vert^{2}_{\mathcal{Y}} \Vert(y,p^1,p^2,h)\Vert^{2}_{\mathcal{Y}}.
\end{split}    
\end{equation*}


	Thus, $\mathcal{A}_{0}(y,p^{1},p^{2},h)\in \mathcal{F}.$
	
	\noindent Now let us analyze $\mathcal{A}_{i}(y,p^{1},p^{2},h)$, with $i=\lbrace 1,2\rbrace$. Notice the following:
\begin{equation*}
\begin{array}{lll}
\Vert\mathcal{A}_{i}(y,p^{1},p^{2},h)\Vert^{2}_{\mathcal{F}}
&\leq & 2\displaystyle\int_{Q} \rho^{2}_{2}|H_i|^{2}dxdt \\
&\quad & + \ 2\displaystyle\int_{Q} \rho^{2}_{2}\left| \left( D_1F\left(y,C(t)\beta(x)y_x \right) - D_1 F(0,0) \right) p^1 \right.\\
&&\quad\quad\quad\quad\quad \left.
-C(t)\left( \left( D_2F\left(y,C(t)\beta(x)y_x \right)-D_2 F(0,0) \right) \sqrt{a}p^1 \right)_x \right|^{2}dxdt \\
&\quad & + \
2\displaystyle\int_{Q}\rho^{2}_{2}\left| \alpha_i y_{i,d} 1_{O_{i,d}}\right|^{2}dxdt + \\
& = & 2I_{3} + 2I_{4}+2I_5.
\end{array}
\end{equation*}

By definition of the space $\mathcal{Y}$ we have that $I_{3}\leq C \|(y,p^{1},p^{2},h)\|^{2}_{\mathcal{Y}}$. 

In the following estimates $M$ is a positive constant whose value can change from line to line. Using the mean value theorem and computing the derivative with respect to to $x$,
\begin{equation*}
\begin{split}
I_4 &\leq M \int_Q \rho_2^2 |\left( D_1F\left(y,C(t)\beta(x) y_x \right) - D_1 F(0,0) \right)|^2 |p^i|^2 dxdt \\
&+ M \int_Q \rho_2^2 \left| \left( \left( D_2F\left(y,C(t)\beta(x) y_x \right)-D_2 F(0,0) \right) \beta(x)p^i \right)_x \right|^2 \\
&\leq M\int_Q \rho_2^2 \tilde\theta^2 (|y|^2 + C(t)^2 |\sqrt{a}y_x|^2) |p^i|^2 dxdt \\
& + M \int_Q \rho_2^2 \left( |D_{12}F(y,C(t)\beta(x) y_x)|^2|y_x|^2|\beta(x)|^2|p^i|^2 \right. \\
&\left. \qquad\qquad\quad + |D_{22}F(y,C(t)\beta(x) y_x)|^2 C(t)^2 | (\beta(x)y_x)_x|^2 |\beta(x)|^2 |p^i|^2 \right) \\
& + M \int_Q \rho_2^2 \left| D_2F(y,C(t)\beta(x)y_x)-D_2F(0,0)\right|^2 |(\beta(x)p^i)_x|^2.  
\end{split}
\end{equation*}

Now we use the
hypotheses on $\beta(x)$ in \eqref{eq:cond_beta}
to estimate the expressions
\begin{equation}\label{eq:est_beta1}
    |(\beta(x)y_x)_x|^2 |\beta(x)|^2 = |\beta (\beta)_x y_x + \beta^2 y_{xx}|^2 \leq |a_x y_x + a y_{xx}|^2 = |(a(x)y_x)_x|^2,        
\end{equation}
and
\begin{equation}\label{eq:est_beta2}
|(\beta(x)p^i)_x|^2 \leq M|p^i|^2 + |\sqrt{a} p^i_x|^2.   
\end{equation}

Thus, using \eqref{eq:cond_beta}, \eqref{eq:est_beta1}, \eqref{eq:est_beta2}, that $F$ has bounded second derivatives, that $C$ is bounded and the comparison of the weights in \eqref{eq:compara_rhos} we get
\begin{equation*}
\begin{split}
I_4 &\leq M \int_Q \rho_2^2 \tilde\theta^2 (|y|^2 + C(t)^2 |\sqrt{a}y_x|^2) |p^i|^2 dxdt \\
&\quad + M \int_Q \rho_2^2 (|D_{12}F(y,C(t)\beta(x) y_x)|^2|\sqrt{a}y_x|^2 |p^i|^2 + |D_{22}F(y,C(t)\beta(x) y_x)|^2 C(t)^2 |(a(x)y_x)_x|^2 |p^i|^2 \\
&\quad + M \int_Q \rho_2^2 \tilde\theta^2 (|y|^2 + C(t)^2 |\sqrt{a} y_x|^2) (|p^i|^2 + |\sqrt{a} p^i_x|^2)  \\
&\leq M \int_Q \rho_1^2 (|y|^2 + |\sqrt{a}y_x|^2) \rho_1^2 |p^i|^2 dxdt \\
&\quad + M \int_Q \rho_1^2 |\sqrt{a}y_x|^2 \rho_1^2 |p^i|^2 + \rho_1^2 |(a(x)y_x)_x|^2 \rho_1^2 |p^i|^2 
+ M \int_Q \rho_1^2 (|y|^2 +  |\sqrt{a} y_x|^2) \rho_1^2 (|p^i|^2 + |\sqrt{a} p^i_x|^2). 
\end{split}
\end{equation*}

Using one more time \eqref{eq:compara_rhos} and the additional estimates in Proposition \ref{addicional_estimates_case_linear}, 
\begin{equation*}
\begin{split}
I_4 & \leq 
M \int_Q \sup_{t \in [0,T]} \{ \hat \rho^2 |y|^2 + \rho_1^2 |\sqrt{a}y_x|^2 \} \ \hat\rho^2 |p^i|^2 + \left( \sup_{t \in [0,T]} \{ \rho_1^2 |\sqrt{a}y_x|^2 \} + \sup_{t \in [0,T]} \{ \rho_1^2 |(a(x)y_x)_x|^2 \} \right) \hat\rho^2  |p^i|^2)dx dt\\
& + M \int_Q \sup_{t \in [0,T]} \{ \hat \rho^2 |y|^2 + \rho_1^2 |\sqrt{a}y_x|^2 \} \ (\hat \rho^2 |y|^2 + \rho_1^2 |\sqrt{a}y_x|^2) dxdt \\
& \leq M \|(y,p^{1},p^{2},h)\|^{4}_{\mathcal{Y}}.
\end{split}    
\end{equation*}




    
Finally, by the hypothesis $\rho^2 y_{i,d} \in L^2(Q)$, we have that $I_5$ is bounded. Thus $\mathcal{A}$ is well-defined.

\end{proof}


\begin{lema}\label{DA continuo}
    The mapping $\mathcal{A}: \mathcal{Y}\longrightarrow  \mathcal{Z}$ is continuously differentiable.
\end{lema}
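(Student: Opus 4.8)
The plan is to split $\mathcal{A}$ into its affine part and its genuinely nonlinear part, dispatch the affine part at once, and then show the nonlinear part is $C^1$ by exhibiting a candidate derivative, checking it is a bounded linear operator with the estimates already used in Lemma \ref{A bem definido}, and verifying that the base-point-to-derivative assignment is (locally Lipschitz, hence) continuous. Concretely, write $\mathcal{A} = \Lambda + \mathcal{G}$, where $\Lambda(y,p^1,p^2,h)$ collects all the terms that are affine in $(y,p^1,p^2,h)$ — the terms $y_t$, $-b(a y_x)_x$, $B c\sqrt{a}\,y_x$, $D_1F(0,0)y$, $D_2F(0,0)\beta y_x$, the couplings $\frac{1}{\mu_i}p^i 1_{O_i}$, $-h1_O$, the datum terms $-\alpha_i(y-y_{i,d})1_{O_{i,d}}$, and $\mathcal{A}_3 = y(\cdot,0)$. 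By the very definitions of $\|\cdot\|_{\mathcal{Y}}$ and of $\mathcal{Z} = \mathcal{F}^3 \times H^1_a(\Omega)$, the map $\Lambda : \mathcal{Y}\to\mathcal{Z}$ is bounded and linear, hence $C^\infty$, so it remains to treat $\mathcal{G} = (\mathcal{G}_0,\mathcal{G}_1,\mathcal{G}_2,0)$ with $\mathcal{G}_0(y) = F(y,C\beta y_x) - D_1F(0,0)y - C D_2F(0,0)\beta y_x$ and $\mathcal{G}_i(y,p^i) = (D_1F(y,C\beta y_x)-D_1F(0,0))p^i - C\big((D_2F(y,C\beta y_x)-D_2F(0,0))\beta p^i\big)_x$.

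For a base point $\mathbf{w}=(y,p^1,p^2,h)$ and an increment $\bar{\mathbf{w}}=(\bar y,\bar p^1,\bar p^2,\bar h)$, I would set as candidate derivative
$$
D\mathcal{G}_0(\mathbf{w})\bar{\mathbf{w}} = \big(D_1F(y,C\beta y_x)-D_1F(0,0)\big)\bar y + C\big(D_2F(y,C\beta y_x)-D_2F(0,0)\big)\beta\bar y_x,
$$
and, for $\mathcal{G}_i$, the derivative of the product, which besides the $\mathcal{G}_0$-type contributions multiplied by $p^i$ produces the terms $(D_1F(y,C\beta y_x)-D_1F(0,0))\bar p^i$ and $-C\big((D_2F(y,C\beta y_x)-D_2F(0,0))\beta\bar p^i\big)_x$. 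That $D\mathcal{G}(\mathbf{w})\in\mathcal{L}(\mathcal{Y},\mathcal{Z})$ follows by repeating verbatim the estimates of Lemma \ref{A bem definido} for $I_2$ and $I_4$: the mean value theorem contributes the factor $(|y|^2 + C^2|\beta y_x|^2)^{1/2}$ (or, after one $x$-differentiation, an expression estimated through \eqref{eq:cond_beta}, \eqref{eq:est_beta1}, \eqref{eq:est_beta2}), which is absorbed using the boundedness of the second derivatives of $F$, the continuous embedding $H^1_a(\Omega)\hookrightarrow L^\infty(\Omega)$, the weight comparisons \eqref{eq:compara_rhos}, and the additional estimates \eqref{eq:estimativas_total}; one obtains $\|D\mathcal{G}(\mathbf{w})\bar{\mathbf{w}}\|_{\mathcal{Z}}\le M\|\mathbf{w}\|_{\mathcal{Y}}\|\bar{\mathbf{w}}\|_{\mathcal{Y}}$, so $D\mathcal{G}(\mathbf{w})$ is bounded with operator norm controlled by $\|\mathbf{w}\|_{\mathcal{Y}}$. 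The same mean value estimate applied to $\mathcal{G}(\mathbf{w}+s\bar{\mathbf{w}})-\mathcal{G}(\mathbf{w})-sD\mathcal{G}(\mathbf{w})\bar{\mathbf{w}}$ shows this remainder is $o(s)$ in $\mathcal{Z}$, so $D\mathcal{G}(\mathbf{w})$ is the Gateaux derivative.

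Finally I would prove $\mathbf{w}\mapsto D\mathcal{G}(\mathbf{w})$ is continuous. For $\mathbf{w}_1,\mathbf{w}_2\in\mathcal{Y}$ the difference $(D\mathcal{G}(\mathbf{w}_1)-D\mathcal{G}(\mathbf{w}_2))\bar{\mathbf{w}}$ is a finite sum of terms of the form $\big(D_jF(y_1,C\beta y_{1,x})-D_jF(y_2,C\beta y_{2,x})\big)\cdot(\text{first-order expression in }\bar{\mathbf{w}}\text{ or in }p^i_1)$, together with terms where only $y_1-y_2$ or $p^i_1-p^i_2$ appears. Applying the mean value theorem once more to $\nabla D_jF$, the first factor is bounded pointwise by $K(|y_1-y_2| + C|\beta(y_{1,x}-y_{2,x})|)$; multiplying, integrating against $\rho_2^2$, using $\rho_2\le C\rho_1$ to split the weights and distributing the rest exactly as in Lemma \ref{A bem definido}, and invoking \eqref{eq:estimativas_total} for both $\mathbf{w}_1$ and $\mathbf{w}_2$, yields $\|D\mathcal{G}(\mathbf{w}_1)-D\mathcal{G}(\mathbf{w}_2)\|_{\mathcal{L}(\mathcal{Y},\mathcal{Z})}\le M(\|\mathbf{w}_1\|_{\mathcal{Y}}+\|\mathbf{w}_2\|_{\mathcal{Y}})\|\mathbf{w}_1-\mathbf{w}_2\|_{\mathcal{Y}}$. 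Thus $D\mathcal{G}$ is locally Lipschitz, hence continuous, and a Gateaux-differentiable map with continuous Gateaux derivative is of class $C^1$; since $\Lambda$ is $C^\infty$, this proves the lemma.

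The main obstacle is the bookkeeping in $\mathcal{G}_i$: differentiating the $x$-derivative term $\big((D_2F(y,C\beta y_x)-D_2F(0,0))\beta p^i\big)_x$ produces, via the chain rule, a contribution involving $(\beta y_x)_x$ — equivalently, after \eqref{eq:est_beta1}, the top-order quantity $(a(x)y_x)_x$ — which is only controlled in the $\rho_1$-weighted $L^2(Q)$ norm furnished by Proposition \ref{addicional_estimates_case_linear}. One must check that in every product the weights recombine correctly (through $\hat{\rho}^2=\rho_0\rho_1$, $\rho_1\le C\rho_0\le C\rho_2$ and $\rho_2\le C\rho_1^2$) so that the ``$L^\infty_t$ part'' always carries a $\hat{\rho}$ or $\rho_1$ matched against an $L^2_{x,t}$ part, exactly as in the treatment of $I_4$ above. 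Once the weight accounting mirrors Lemma \ref{A bem definido}, both the differentiability and the continuity of the derivative are routine, if lengthy.
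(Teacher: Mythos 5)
Your proposal is correct and follows essentially the same route as the paper: exhibit the candidate derivative, verify it is the Gateaux derivative via the mean value theorem for the $C^2$ function $F$ together with the weighted estimates \eqref{eq:estimativas_total} and the weight comparisons \eqref{eq:compara_rhos}, show the derivative map is continuous (you get local Lipschitz continuity, slightly stronger) by the same splitting into difference terms, and conclude $C^1$ from the classical ``continuous Gateaux derivative implies Fr\'echet $C^1$'' result. The only cosmetic difference is that you peel off the affine part $\Lambda$ first, which the paper handles implicitly since those terms cancel in the difference quotients; note only that your operator-norm and Lipschitz bounds should read as polynomial in $\|\mathbf{w}\|_{\mathcal{Y}}$ (the bilinear terms $D^2_{jk}F\,\bar y\,p^i$ contribute products of norms), which does not affect the conclusion.
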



\begin{proof}
	First we prove that $\mathcal{A}$ is Gateaux differentiable at any $(y,p^1,p^2,h) \in \mathcal{Y}$ and let us compute the
	$\textit{G-derivative}$ ${\mathcal{A}}^{\prime}(y, p^{1}, p{^2}, h)$.
	Consider the linear mapping $D \mathcal{A}: \mathcal{Y} \to \mathcal{Z}$ given by
	$$
	D\mathcal{A}(y,p^1,p^2,h) = (D\mathcal{A}_0,D\mathcal{A}_1,D\mathcal{A}_2,D\mathcal{A}_3),
	$$
	where, for $i=1,2$ and  $(\bar y, \bar p^1,\bar p^2,\bar h) \in \mathcal{Y}$,
	\begin{equation}\label{eq:der_map_A}
		\left\{\begin{split}
			D\mathcal{A}_0(\bar y, \bar p^1,\bar p^2,\bar h) = &  \, \bar y_t - b(t)(a(x) \bar y_x)_x -B\sqrt{a} \bar y_x+ D_1 F(y,C\beta(x)y_x)\bar y \\
            &+ C D_2 F(y,C\beta(x)y_x)\beta(x)\bar y_x - \bar h \cara_{\mathcal{O}} + \frac{1}{\mu_1} \bar p^1 \cara_{\mathcal{O}_{1}} + \frac{1}{\mu_2} \bar p^2 \cara_{\mathcal{O}_{2}}, \\
		D\mathcal{A}_i(\bar y, \bar p^1,\bar p^2,\bar h) =& - \bar       p_t^i - (a(x) \bar p_x^i)_x +\left(B\sqrt{a} \bar            y\right)_x 
        \\
        &+D_{11}^2 F(y,C\beta(x)y_x)\bar y p^i + D_{12}^2(y,C\beta(x)y_x)C\beta(x)\bar y_x p^i \\
        &-(D_{21}^2 F(y,C\beta(x)y_x)\bar y p^i)_x - (D_{22}^2 F(y,C\beta(x)y_x)C \beta(x) \bar y_x p^i)_x \\
        &+ D_1 F(y,C\beta(x)y_x)\bar p^i - \left(D_2 F(y,C\beta(x)y_x) \bar p^i \right)_x - \alpha_i \bar y \cara_{\mathcal{O}_{id}}, \\
		D\mathcal{A}_3(\bar y, \bar p^1,\bar p^2,\bar h) =& \, \bar      y(0).
		\end{split}\right.    
	\end{equation}
	
	We have to show that, for $i=0,1,2,3$, 
	$$
	\frac{1}{\lambda}\left[ \mathcal{A}_i ((y,p^1,p^2,h)+\lambda(\bar y, \bar p^1,\bar p^2,\bar h)) - \mathcal{A}_i (y,p^1,p^2,h) \right] \to D\mathcal{A}_i(\bar y, \bar p^1,\bar p^2,\bar h),
	$$
	strongly in the corresponding factor of $\mathcal{Z}$ as $\lambda \to 0$.
	
	Indeed, we have
	\begin{equation*}
		\begin{split}
			&\left\| \frac{1}{\lambda}\left[ \mathcal{A}_0 ((y,p^1,p^2,h)+\lambda(\bar y, \bar p^1,\bar p^2,\bar h)) - \mathcal{A}_0(y,p^1,p^2,h) \right] - D\mathcal{A}_0(\bar y, \bar p^1,\bar p^2,\bar h) \right\|^{2}_{L^2(\rho_2^2,Q)} \\
			&=\left\|
			 \frac{1}{\lambda}(y_t+\lambda \bar y_t - b(t)\left( a(x) (y_x+\lambda \bar y_x) \right)_x- B\sqrt{a}(y_x+\lambda \bar y_x) + F(y+\lambda \bar y,C\beta(x)(y_x+\lambda \bar y_x))  \right. \\
			 & \ \ \ \ \ \ \ \ \ \  - (h+\lambda \bar h) \cara_{\mathcal{O}} + \frac{1}{\mu_1}  (p^1+\lambda \bar p^1) \cara_{\mathcal{O}_1} + \frac{1}{\mu_2} (p^2+\lambda \bar p^2) \cara_{\mathcal{O}_2}\\
			& \ \ \ \ \ \ \ \ \ \ \left. -y_t + b(t)\left( a(x) y_x \right)_x+B\sqrt{a}y_x - F(y,C\beta(x)y_x) + h 1_{O}  - \frac{1}{\mu_1} p^1 \cara_{\mathcal{O}_1} - \frac{1}{\mu_2} p^2 \cara_{\mathcal{O}_2})\right.\\
			& \ \ \ \ \ \ \ \ \ \ \left. -\bar y_t + b(t)(a(x) \bar y_x)_x +B\sqrt{a} \bar y_x - \nabla F(y,C\beta(x)y_x)(\bar y,C\beta(x)\bar y_x)\right. \\
            &\ \ \ \ \ \ \ \ \ \ \left. + \bar h \cara_{\mathcal{O}}- \frac{1}{\mu_1} \bar p^1 \cara_{\mathcal{O}_1} -\frac{1}{\mu_2} \bar p^2 \cara_{\mathcal{O}_2} \right\|^{2}_{L^2(\rho_2^2,Q)} \\
			& = \int_Q \rho_2^2 \left| \frac{1}{\lambda}\left(F(y+\lambda\bar y,C\beta(x)(y_x + \lambda\bar y_x))-F(y,C\beta(x)y_x)\right) - \nabla F(y,C\beta(x)y_x)(\bar y,C\beta(x)\bar y_x) \right|^2 = J_1.
		\end{split}
	\end{equation*}
    
    By the mean value theorem and using that $F$ is of class $C^2$,  
    for $\tilde\lambda = \tilde\lambda(x,t) \in (0,\lambda)$, we have 
	\begin{eqnarray*}
		J_1
		&=&\int_Q \rho_2^2 \left| \nabla F(y+\tilde\lambda \bar y,C\beta(x)(y_x+\tilde\lambda \bar y_x)) - \nabla F(y,C\beta(x)y_x) \right|^2 (|\bar y|^2 + |C\beta(x)\bar y_x|^2)\\
		&=& C_1 \lambda \int_Q \rho_2^2 \left( |\bar y|^4 + |C\beta(x) \bar y_x|^4 \right)
	\end{eqnarray*}
	We get that $J_1$ converges to zero as $\lambda \to 0$ using \eqref{eq:estimativas_total}.
	
	On the other hand, for $i=1,2$ we get, analogously,
	\begin{equation}
		\begin{split}
			&\left\| \frac{1}{\lambda}\left[ \mathcal{A}_i ((y,p^1,p^2,h)+\lambda(\bar y, \bar p^1,\bar p^2,\bar h)) - \mathcal{A}_i (y,p^1,p^2,h) \right] - D\mathcal{A}_i(\bar y, \bar p^1,\bar p^2,\bar h) \right\|^{2}_{L^2(\rho_2^2,Q)} \\
                &= \left\| \frac{F_{\lambda,1}-\bar F_1}{\lambda} p^i -(D_{11}F) \bar y p^i - (D_{12}F) C\beta(x)\bar y_x p^i \right\|^{2}_{L^2(\rho_2^2,Q)} \\
                &+ \left\| \frac{\left( (F_{\lambda,2}-\bar F_2)C \beta(x) p^i \right)_x}{\lambda}- \left((D_{21}F)\bar y p^i + (D_{22}F) C\beta(x)\bar y_x p^i \right)_x \right\|^{2}_{L^2(\rho_2^2,Q)} \\
                &+ \left\| (F_{\lambda,1}-\bar F_1)\bar p^i - (D_1 F) \bar p^i + \left((F_{\lambda,2}-\bar F_2)C \beta(x) \bar p^i - (D_2 F) C \beta(x)\bar p^i \right)_x  \right\|^{2}_{L^2(\rho_2^2,Q)} :=J_2.
		\end{split}
	\end{equation}

    Proceeding as above, by the mean value theorem, using that $F$ is of class $C^2$ and the estimates \eqref{eq:estimativas_total} we get that $J_2$ converges to zero as $\lambda \to 0$.
	
    This finish the proof that $\mathcal{A}$ is Gateaux differentiable, with a \textit{G-derivative} $\mathcal{A^{\prime}}(y,{p^{1}},p^{2},h)= D\mathcal{A}(y,{p^{1}},p^{2},h)$.
	
    Now take $(y,p^{1},p^{2},h)\in \mathcal{Y}$ and let $((y_{n},p^{1}_{n},p^{2}_{n},h_{n}))_{n=0}^{\infty}$ be a sequence which converges to  $(y,p^{1},p^{2},h)$ in $\mathcal{Y}$. For each $(\bar{y},\bar{p}^{1},\bar{p}^{2},\bar{h})\in B_{r}(0)$. 
    From the expression of the formal derivative of $\mathcal{A}$,  \eqref{eq:der_map_A}, we have 
	\begin{eqnarray*}
		&&(D\mathcal{A}_0(y_{n},p^{1}_{n},p^{2}_{n},h_{n})-D\mathcal{A}_0(y,p^{1},p^{2},h))(\bar y, \bar p^1,\bar p^2,\bar h) \\
        &&= \left(D_1 F(y_n,C,\beta(x)y_{n,x})-D_1 F(y,C\beta(x)y_x)\right)\bar y + \left(D_2 F(y_n,C,\beta(x)y_{n,x})-D_2 F(y,C\beta(x)y_x)\right)C\beta(x)\bar y\\
        &&= X_1^1+X_2^1.
	\end{eqnarray*}
	For $X^{1}_{1}$ we have, 
	\begin{eqnarray*}
		\int_{0}^{T}\int_{0}^{1}\rho^{2}_{2}|X_{1}^{1}|^{2} dxdt&\leq& C \int_{0}^{T}\int_{0}^{1}\rho_{2}^{2} |D_1F(y_n,C\beta(x)y_{n,x})-D_1F(y,C\beta(x)y_x)|^2 |\bar y|^{2}dxdt\\
		&\leq&C\int_{0}^{T}\int_{0}^{1}\rho_{2}^{2} M^2 |y_n-y|^2 |\bar y|^{2}dxdt
	\end{eqnarray*}
	By \eqref{eq:compara_rhos} we have:  $\rho_2^2\leq C \rho_1^2 \rho^2_1\leq C \hat{\rho}^2 \rho_0^2$, 
		\begin{eqnarray*}
		\int_{0}^{T}\int_{0}^{1}\rho^{2}_{2}|X_{1}^{1}|^{2} dxdt&\leq& C\int_{0}^{T}\int_{0}^{1}\hat{\rho}^2 \rho_0^2  |y_n-y|^2 |\bar y|^{2}dxdt\\
		&\leq& C\sup_{[0,T]}\left(\hat{\rho}^2\| \bar{y}\|^2_{L^2(\Omega)}\right)\int_{0}^{T}\int_{0}^{1} \rho_0^2  |y_n-y|^2 dxdt\\
			&\leq&C\|(\bar y,\bar p^{1},\bar p^{2},\bar h\|_{\mathcal{Y}} \cdot \|\rho_0 (y_n-y)\|_{L^2(Q)}\\
		&\leq&C\|(y_{n}-y),(p^{1}_{n}-p^{1}),(p^{2}_{n}-p^{2}),(h_{n}-h)\|_{\mathcal{Y}}\rightarrow 0.
	\end{eqnarray*}
	
	An analogous estimate holds also for $X_2^1$.
	
	Now consider the term $D\mathcal{A}_{i}$, then 
	\begin{eqnarray*}
		&&(D\mathcal{A}_i(y_{n},p^{1}_{n},p^{2}_{n},h_{n})- D\mathcal{A}_i(y,p^{1},p^{2},h))(\bar y, \bar p^1,\bar p^2,\bar h) \\ 
        &&=\left( D_1 F(y_n,C\beta(x)y_{n,x})\bar p^i - D_1 F(y,C\beta(x)y_{x})\bar p^i \right)\\
        &&\qquad - \left( \left(D_2 F(y_n,C\beta(x)y_{n,x}) \bar p^i \right)_x - \left(D_2 F(y,C\beta(x)y_x) \bar p^i \right)_x \right) \\
        &&\qquad + (D_{11}^2 F(y_n,C\beta(x)y_{n,x})\bar y p_n^i - D_{11}^2 F(y,C\beta(x)y_x)\bar y p^i) \\
        &&\qquad + (D_{12}^2(y_n,C\beta(x)y_{n,x})C\beta(x)\bar y_x p_n^i - D_{12}^2(y,C\beta(x)y_x)C\beta(x)\bar y_x p^i ) \\
        &&\qquad -(D_{21}^2 F(y_n,C\beta(x)y_{n,x})\bar y p_n^i)_x
 + (D_{21}^2 F(y,C\beta(x)y_{x})\bar y p^i)_x \\        
        &&\qquad - (D_{22}^2 F(y_n,C\beta(x)y_{n,x})C \beta(x) \bar y_x p_n^i)_x +  (D_{22}^2 F(y,C\beta(x)y_x)C \beta(x) \bar y_x p^i)_x \\
        &&= X^2_1 + X^2_2+X^2_3+X^2_4+X^2_5+X^2_6.
	\end{eqnarray*}
	
For example, for $X^{2}_{1}$, 
\begin{eqnarray*}
	\int_{0}^{T}\int_{0}^{1}\rho^{2}_{2}|X_{1}^{2}|^{2} dxdt&=&
    \int_{0}^{T}\int_{0}^{1}\rho_{2}^{2} \left| D_1 F(y_n,C\beta(x)y_{n,x})\bar p^i - D_1 F(y,C\beta(x)y_{x})\bar p^i \right|^2 \\
    &\leq& \int_{0}^{T}\int_{0}^{1}\rho_{2}^{2} \left| D_1 F(y_n,C\beta(x)y_{n,x})\bar p^i - D_1 F(y,C\beta(x)y_{n,x})\bar p^i \right|^2 \\
    && + \int_{0}^{T}\int_{0}^{1}\rho_{2}^{2} \left| D_1 F(y,C\beta(x)y_{n,x})\bar p^i - D_1 F(y,C\beta(x)y_{x})\bar p^i \right|^2 \\
    &\leq& C \int_{0}^{T}\int_{0}^{1}\rho_{2}^{2} \left( |y_n - y|^2 |\bar p^i|^2 + C^2\beta(x)^2|y_{n,x}-y_x|^2 |\bar p^i|^2 \right) 
\end{eqnarray*}
By \eqref{eq:compara_rhos} we have:  $\rho_2^2\leq C \rho_1^2 \rho^2_1\leq C \hat{\rho}^2 \rho_0^2\leq C \hat{\rho}^4 \rho_0^2 $
\begin{eqnarray*}
	\int_{0}^{T}\int_{0}^{1}\rho^{2}_{2}|X_{1}^{2}|^{2} dxdt
	&\leq&  C\sup_{[0,T]}\left(\hat{\rho}^2\| \bar{p}^i\|^2_{L^2(\Omega)}\right)\int_{0}^{T}\int_{0}^{1} \rho_0^2  \left( |y_n-y|^2 +  |\beta(x)(y_{n,x}-y_x)|^2  \right)\\
    &\leq&  C\sup_{[0,T]}\left(\hat{\rho}^2\| \bar{p}^i\|^2_{L^2(\Omega)}\right)\int_{0}^{T}\int_{0}^{1} \rho_0^2  \left( |y_n-y|^2 + |\sqrt{a}(y_{n,x}-y_x)|^2  \right)\\    
	&\leq & C \|(\bar y,\bar p^{1},\bar p^{2},\bar h\|_{\mathcal{Y}} \|(y_{n}-y),(p^{1}_{n}-p^{1}),(p^{2}_{n}-p^{2}),(h_{n}-h)\|_{\mathcal{Y}}\rightarrow 0,  
\end{eqnarray*}
as $n \to \infty$. The others terms $X^2_k$, $k=2,...,6$ are estimated similarly.

	Therefore, $(y,p^{1},p^{2},h)\longmapsto\mathcal{A}^{\prime}(y,p^{1},p^{2},h)$ is continuous from $\mathcal{Y}$ into $\mathcal{L}(\mathcal{Y},\mathcal{Z})$ and as consequently, in view of classical results, we will
	have that $\mathcal{A}$ is Fr\'echet-differentiable and $\mathcal{C}^{1}$.
\end{proof}


\begin{lema}\label{Mapa sobrejetivo}
Let $\mathcal{A}$ be the mapping in \eqref{aplicação A}. Then, $\mathcal{A}^{\prime}(0,0,0,0)$ is onto.
\end{lema}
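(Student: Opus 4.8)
The plan is to recognize that $\mathcal{A}'(0,0,0,0)$ is precisely the linear operator underlying the linearized optimality system \eqref{eq:linearized_system}, so that its surjectivity is an immediate consequence of Theorem \ref{theorem case linear} together with Proposition \ref{addicional_estimates_case_linear}. Concretely, by Lemma \ref{DA continuo} the Fr\'echet derivative $\mathcal{A}'(0,0,0,0)$ equals the linear map $D\mathcal{A}(0,0,0,0)$ obtained from \eqref{eq:der_map_A} by setting $y=p^1=p^2=0$: all the second-order terms $D^2_{ij}F(\cdot,\cdot)$ in the formula for $D\mathcal{A}_i$ carry a factor $p^i$ and hence vanish, while the first-order terms reduce to $D_1F(0,0)$, $D_2F(0,0)$. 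Writing $(H,H_1,H_2,y^0):=\mathcal{A}'(0,0,0,0)(\bar y,\bar p^1,\bar p^2,\bar h)$, the identities $H=D\mathcal{A}_0(\bar y,\bar p^1,\bar p^2,\bar h)$, $H_i=D\mathcal{A}_i(\bar y,\bar p^1,\bar p^2,\bar h)$, $y^0=\bar y(\cdot,0)$ are exactly the equations of the linearized system \eqref{eq:linearized_system} with sources $H,H_1,H_2$ and initial datum $y^0$. Thus it suffices to show that every $(H,H_1,H_2,y^0)\in\mathcal{Z}$ admits a preimage $(\bar y,\bar p^1,\bar p^2,\bar h)\in\mathcal{Y}$.

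So let $(H,H_1,H_2,y^0)\in\mathcal{Z}=\mathcal{F}\times\mathcal{F}\times\mathcal{F}\times H^1_a(\Omega)$ be given; by definition of $\mathcal{F}$ this means $\rho_2 H,\rho_2 H_1,\rho_2 H_2\in L^2(Q)$, and $y^0\in H^1_a(\Omega)\hookrightarrow L^2(0,1)$. These are precisely the hypotheses of Theorem \ref{theorem case linear}, which therefore produces a control $h\in L^2(O\times(0,T))$ together with states $y,p^1,p^2\in C^0([0,T];L^2(0,1))\cap L^2(0,T;H^1_a)$ solving \eqref{eq:linearized_system} and satisfying \eqref{estimate for solution}; in particular $\rho_0 y,\rho_0 p^i\in L^2(Q)$ and $\rho_1 h\in L^2(O\times(0,T))$. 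Since moreover $y^0\in H^1_a(\Omega)$, Proposition \ref{addicional_estimates_case_linear} applies and furnishes the additional bounds \eqref{des Proposition 5} and \eqref{des Proposition 6} for $y,p^1,p^2$.

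It then remains to verify that $(y,p^1,p^2,h)$ lies in $\mathcal{Y}$ as defined in \eqref{eq:espaceY}. The membership $(y,p^1,p^2,h)\in[L^2(\Omega\times(0,T))]^3\times L^2(O\times(0,T))$ and the absolute continuity in $x$ for a.e.\ $t$ follow from $y,p^i\in L^2(0,T;H^1_a)$; the homogeneous lateral boundary values and $p^i(\cdot,0)=0$ are built into \eqref{eq:linearized_system}; $\rho_1 h\in L^2(O\times(0,T))$ and $\rho_0 y,\rho_0 p^i\in L^2(Q)$ are given by \eqref{estimate for solution}; and $\rho_2 H,\rho_2 H_i\in L^2(Q)$ together with $y(\cdot,0)=y^0\in H^1_a(\Omega)$ are part of the prescribed data. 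Hence $(y,p^1,p^2,h)\in\mathcal{Y}$, and by construction $\mathcal{A}'(0,0,0,0)(y,p^1,p^2,h)=(H,H_1,H_2,y^0)$. As $(H,H_1,H_2,y^0)\in\mathcal{Z}$ was arbitrary, $\mathcal{A}'(0,0,0,0)$ is onto.

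Since the substantive analytic content — the Carleman and observability estimates and the weighted energy estimates — is already packaged into Theorem \ref{theorem case linear} and Proposition \ref{addicional_estimates_case_linear}, no genuine obstacle is expected in this lemma: the work is essentially bookkeeping, the only points requiring care being the identification of $D\mathcal{A}(0,0,0,0)$ with the linearized system and the checking of each norm-defining condition entering $\mathcal{Y}$. (One mild remark: the right inverse of $\mathcal{A}'(0,0,0,0)$ obtained this way need not be linear, but Liusternik's Theorem \ref{Liusternik} requires only surjectivity of the derivative, so this is harmless.)
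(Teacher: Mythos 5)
Your proof is correct and follows essentially the same route as the paper: apply Theorem \ref{theorem case linear} to an arbitrary $(H,H_1,H_2,y_0)\in\mathcal{Z}$ to produce $(y,p^1,p^2,h)$ solving \eqref{eq:linearized_system}, then check membership in $\mathcal{Y}$ and read off $\mathcal{A}'(0,0,0,0)(y,p^1,p^2,h)=(H,H_1,H_2,y_0)$. Your added details — the explicit identification of $D\mathcal{A}(0,0,0,0)$ with the linearized system (the second-derivative terms vanishing because they carry a factor $p^i$) and the itemized verification of the conditions defining $\mathcal{Y}$ — are steps the paper leaves implicit, and they are correct.
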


\begin{proof}
Let $(H, H_{1}, H_{2}, y_{0})\in \mathcal{Z}$. From Theorem \ref{theorem case linear} we know there exists $y, p^{1}, p^{2}$ satisfying \eqref{eq:linearized_system} and \eqref{estimate for solution}. Furthermore, we know that $y, p^{1}, p^{2}\in C^{0}([0,T];L^{2}(0,1))\cap L^{2}(0,T;H^{1}_{a})$. Consequently, $(y,p^{1},p^{2},h)\in \mathcal{Y}$ and $$\mathcal{A}^{\prime}(0,0,0,0)(y,p^{1},p^{2},h)=(H,H_{1}, H_{2},y_{0}).$$ 
\hfill

This ends the proof.
\end{proof}

\noindent\textbf{Proof of Theorem \ref{thm:local_null_controllability}.}

\noindent According to Lemmas \ref{A bem definido}-\ref{Mapa sobrejetivo} we can apply the Inverse Mapping Theorem (Theorem \ref{Liusternik}) and consequently there exists $\delta > 0$ and a mapping $W:B_{\delta}(0)\subset {\mathcal{Z}}\rightarrow {\mathcal{Y}}$ such that
\begin{equation*}
    W(z)\in B_{r}(0)\,\,\, \text{and}\,\,\, {\mathcal{A}}(W(z))=z, \,\,\, \forall z\in B_{\delta}(0).
\end{equation*}
Taking $(0,0,0,y_{0})\in B_{\delta}(0)$ and $(y,p^{1},p^{2},h)=W(0,0,0,y_{0})\in {\mathcal{Y}}$, we have
\begin{equation*}
    {\mathcal{A}}(y,p^{1},p^{2},h)=(0,0,0,y_{0}).
\end{equation*}
Thus, we conclude that \eqref{eq:optimality_system} is locally null controllable at time $T > 0$.

\section{Final remarks and some problems}
\label{sec:final_remarks}

Here we present some problems in the context addressed in this paper that, as far as we know, are open.
\begin{itemize}
\item An interesting problem is the local null controllability of a quasilinear degenerate parabolic equation in a non-cylindrical domain, i.e., using the notations of our paper, 
		$$	\begin{cases}
		u_t-(b(u){a}(x')u_{x'})_{x'}+F(u,\beta(x) u_{x'})=\widehat{h}\cara_{{\widehat{\mathcal{O}}}}+\widehat{v}^1\cara_{{\widehat{\mathcal{O}}_1}}+\widehat{v}^2\cara_{{\widehat{\mathcal{O}}_2}}, & \ \ \ \text{in} \ \ \ \widehat{Q},\\
		u(0,t)=u(\ell(t),t)=0, & \ \ \ \text{on} \ \ \ \widehat{\Sigma},\\
		u(0)=u_0(x'), & \ \ \ \text{in} \ \ \ \Omega_0.
	\end{cases}$$
	Here $b = b(r)$ is a real function of class $C^3$ such that, for any $r\in \mathbb{R}$,
	$$0< b_0\leq b(r)\leq b_1, \ \ \ \ \ \ \ \ |b'(r)|+|b''(r)|+|b'''(r)|\leq M.$$
    A contribution to the case of quasilinear equations and hierarchical control was made by Huaman \cite{frances24} and Nu\~nez-Chavez and Limaco \cite{miguel}, who studied the problem of exact hierarchical controllability along trajectories in cylindrical domains. We intend to extend this work to moving or non-cylindrical domains.
    
    \item Another problem is the controllability of the following degenerate,
non-local, semi-linear parabolic equation in a non-cylindrical domain:
		$$	\begin{cases}
	u_t-\left({a}(x')l\left(\int_{\Omega_t}u dx'\right)u_{x'}\right)_{x'}+F(u,\beta(x)u_{x'})=\widehat{h}\cara_{{\widehat{\mathcal{O}}}}+\widehat{v}^1\cara_{{\widehat{\mathcal{O}}_1}}+\widehat{v}^2\cara_{{\widehat{\mathcal{O}}_2}}, & \ \ \ \text{in} \ \ \ \widehat{Q},\\
	u(0,t)=u(\ell(t),t)=0, & \ \ \ \text{on} \ \ \ \widehat{\Sigma},\\
	u(0)=u_0(x'), & \ \ \ \text{in} \ \ \ \Omega_0.
\end{cases}$$
The first contribution to this kind of problem was provided by Limaco et al. \cite{Joao_Juan_Suerlan-25}, who established controllability results for non-local degenerate equations in cylindrical domains. However, the situation becomes more intricate when the spatial domain evolves with time, as additional difficulties arise from the moving boundary. A natural continuation to \cite{Joao_Juan_Suerlan-25} is to investigate the extent to which controllability can be ensured in such non-cylindrical settings. Furthermore, it is of interest to study more general nonlinear models by incorporating gradient-dependent terms. This extension would not only broaden the theoretical framework of controllability for degenerate equations but also enhance its applicability to physical and biological systems where non-local effects, nonlinear interactions, and evolving domains play a crucial role.

\item Another related problem is a semilinear coupled degenerate parabolic system in a non-cylindrical domain:
$$\begin{cases}
	u_{1t}-({a}(x')u_{1x'})_{x'}+F_1(u_1,u_2)=\widehat{h}\cara_{_{\widehat{\mathcal{O}}}}+\widehat{v}^1\cara_{_{\widehat{\mathcal{O}}_1}}+\widehat{v}^2\cara_{_{\widehat{\mathcal{O}}_2}}, & \ \ \ \text{in} \ \ \ \widehat{Q}, \\
	u_{2t}-({a}(x')u_{2x'})_{x'}+F_2(u_1,u_2)=0, & \ \ \ \text{in} \ \ \ \widehat{Q},\\
	u_1(0,t)=u_1(\ell(t),t)=u_2(0,t)=u_2(\ell(t),t)=0, & \ \ \ \text{on} \ \ \ \widehat{\Sigma},\\
	u_1(0)=u_1^0(x'), \ u_2(0)=u_2^0(x') & \ \ \ \text{in} \ \ \ \Omega_0.
\end{cases}$$
The first work in this direction was carried out by Djomegne et al. \cite{diomedes}, who studied semilinear and degenerate coupled parabolic systems in fixed domains. The complexity of the semilinear terms is given by nonlinear functions $F(u_1)$ and $F_2(u_2)$. A natural extension would be to generalize these results to moving domains. This is an ongoing work of the authors \cite{GYL-sistema-2025}.

\item An interesting problem is a  fourth order  degenerate parabolic equation in a non-cylindrical domain:
$$	\begin{cases}
	u_t-\left({a}(x')u_{x'x'}\right)_{x'x'}=\widehat{h}\cara_{{\widehat{\mathcal{O}}}}+\widehat{v}^1\cara_{{\widehat{\mathcal{O}}_1}}+\widehat{v}^2\cara_{{\widehat{\mathcal{O}}_2}}, & \ \ \ \text{in} \ \ \ \widehat{Q},\\
	u(0,t)=u(\ell(t),t)=u_{x'x'}(0,t)=u_{x'x'}(\ell(t),t)=0, & \ \ \ \text{on} \ \ \ \widehat{\Sigma},\\
	u(0)=u_0(x'), & \ \ \ \text{in} \ \ \ \Omega_0.
\end{cases}$$
While classical results exist for second-order degenerate equations in fixed or moving domains , the extension to higher-order equations presents new challenges. In particular, the presence of fourth-order spatial derivatives requires the formulation of appropriate boundary conditions and the development of adapted Carleman estimates \cite{cuarto}. Furthermore, moving boundaries introduce additional technical difficulties complicating both the analysis of the system and the design of controls. Addressing this problem would significantly advance the theory of controllability for higher-order degenerate parabolic equations and could have applications in elasticity, thin film flows, and other physical systems described by fourth-order dynamics.''

\noindent {\bf Acknowledgments}

\noindent This study was financed in part by the Coordenação de Aperfeiçoamento de Pessoal de Nível Superior - Brasil (CAPES) - Finance Code 001.
J. L. was partially supported by CNPq-Brazil.  A.G. was partially supported by Capes-Brazil.

\appendix
\section{Well-Posedness of system (15) } \label{appendix A}

Consider the optimality system (\ref{eq:optimality_system}) and using the change of variable $\varphi^{i}(t)=p^{i}(T-t)$, for $i=1,2.$, we obtain 
\begin{equation*}
	\begin{cases}
		y_t-\frac{1}{\ell(t)^2}\left(a(x)y_x\right)_x-\frac{\ell'(t)}{\ell(t)}xy_x+F(y,C(t)\beta(x) y_x) = {h}\cara_{_{{\mathcal{O}}}}-\frac{1}{\mu_1}\varphi^1\cara_{_{\mathcal{O}_1}}-\frac{1}{\mu_2}\varphi^2\cara_{_{\mathcal{O}_2}}, & \ \ \ \text{in} \ \ \ {Q},\\
		\varphi^i_t-\frac{1}{\ell(t)^2}\left(a(x)\varphi^i_x\right)_x+\frac{\ell'(t)}{\ell(t)}\left(x\varphi^i\right)_x + D_1 F\left(y,C(t) \beta(x) y_x \right)\varphi^i \\
        \qquad - C(t) \left(D_2 F\left(y,C(t) \beta(x) y_x  \right)\beta(x) \varphi^i \right)_x  
        =\alpha_i(y-y_{id})\cara_{_{{\mathcal{O}}_{id}}}, & \ \ \ \text{in} \ \ \ {Q},\\
		y=0, \ \ \ \ \ \varphi^1=0, \ \ \ \ \ \varphi^2=0& \ \ \ \text{on} \ \ \ \Sigma,\\
		y(0)=y_0, \ \ \ \ \ \varphi^1(0)=\varphi_0^1, \ \ \ \ \ \ \varphi^2(0)=\varphi_0^2& \ \ \ \text{in} \ \ \ \Omega.
	\end{cases}
\end{equation*}

In fact, we prove the well-posedness of the more general system:
\begin{equation*}
	\left\{\begin{aligned}
		&y_t - b(t) \left(a(x) y_x\right)_x + c_1(x,t) \sqrt{a} y_x + F(y,c(t) \sqrt{a}y_x) = h 1_{O} - \sum_{i=1}^{2}\frac{1}{\mu_i} \varphi^i 1_{O_i} &&\text{in}&& Q, \\
		&\varphi_t^i - b(t) \left( a(x) \varphi_x^i \right)_x + c_2(x,y,c(t)\sqrt{a} y_x,t) \sqrt{a} \varphi^i_x + g(x,y,c(t) \sqrt{a} y_x,t) \varphi^i  \\
        &\qquad = \alpha_i (y-y_{i,d}) 1_{O_{i,d}}  &&\text{in}&& Q,\\
		&y(0,t)=y(1,t)=0, \ \varphi^i(0,t) = \varphi^i(1,t) = 0 &&\text{on}&& (0,T), \\
		& \varphi^i(\cdot,0) = \varphi^{i}_{0} &&\text{in }&& \Omega, \\
		&y(\cdot,0) = y_0 &&\text{in}&& \Omega,
	\end{aligned}
	\right.
\end{equation*}
where $c_1$, $c_2$, $g$ are bounded functions on $Q$, $0 < b_0 \leq b(t)$, $c(t)$ bounded   
and $F$ is globally Lipschitz in both variables.
In our case, $b(t) = \frac{1}{\ell(t)^2}$, $c_1 = -\frac{\ell(t)'}{\ell(t)} \frac{x}{\sqrt{a}}$, $c_2 = \frac{\ell(t)'}{\ell(t)} \frac{x}{\sqrt{a}} - c(t) D_2 F\left(y,c(t)\sqrt{a} y_x \right)$ and $g = \frac{\ell'(t)}{\ell(t)} + D_1 F \left(y,c(t) \sqrt{a} y_x \right) - c(t) \sqrt{a} \left( D_2 F\left(y, c(t) \sqrt{a} y_x \right) \right)_x$ and $\frac{x}{\sqrt{a}}$ is bounded.

Let $(w_{i})_{i}^{\infty}$ be an orthonormal basis of $H^{1}_{a}(0,1)$ such that 
	$-b(t)(a(x)w_{i,x})_{x}=\lambda_{i}w_{i}$.    
Fix $m\in\mathbb{N}^{*}$. Due the Caratheodory's theorem, there exist absolutely continuous functions $g_{im}=g_{im}(t)$ and $h_{im}=h_{im}(t)$ with $i\in\{1,2,...,m\}$ such that, for $t \in [0,T]$,
\begin{eqnarray*}
	t\mapsto y_{m}(t)=\sum_{i=1}^{m}g_{im}(t)w_{i} \in H_{a}^{1}(0,1) \qquad\text{and}\qquad t\mapsto \varphi_{m}(t)=\sum_{i=1}^{m}h_{im}(t)w_{i} \in H_{a}^{1}(0,1), 
\end{eqnarray*}
satisfy
\begin{equation}
	\label{eq:galerkin_system}
	\left\{\begin{aligned}
		&(y_{m,t},w) - b(t)((a(x) y_{m,x})_x,w) + (c_1 \sqrt{a}  y_{m,x},w) + (F(y_m, c(t) \sqrt{a} y_{m,x}),w) \\
        &\qquad = (h1_{O},w) - \sum_{i=1}^{2}\frac{1}{\mu_i} (\varphi^{i}_{m} 1_{O_i},w) &&\text{in}&& Q, \\
		&(\varphi_{m,t}^i,\hat{w}) - b(t)((a(x) \varphi_{m,x}^i)_x,\hat{w}) + (c_2 \sqrt{a}  \varphi^i_{m,x},\hat w) + (g \varphi^i_m, \hat w)  = (\alpha_i (y_m -y_{i,d})1_{O_{i,d}},\hat{w})   &&\text{in}&& Q,\\
		&y_{m}(0,t)=y_{m}(1,t)=0, \ \varphi_{m}^i(0,t) = \varphi_{m}^i(1,t) = 0 &&\text{on}&& (0,T), \\
		& \varphi_{m}^i(\cdot,0)\to \varphi^{i}_{0} &&\text{in }&& \Omega, \\
		&y_{m}(\cdot,0)\to y_{0} &&\text{in}&& \Omega.
	\end{aligned}
	\right.
\end{equation}

For any $w,\hat{w}\in [w_{1},w_{2},...,w_{m}]$  and $(\cdot,\cdot)=(\cdot,\cdot)_{L^{2}}$. Taking $w=y_{m}$ and $\hat{w}=\varphi^{i}_{m}$, then 
\begin{equation*}
	\begin{split}
		&\frac{1}{2}\frac{d}{dt}\left(\|y_{m}\|^{2}+\sum_{i=1}^{2}\|\varphi_{m}^{i}\|^{2}\right) + b(t)\left(\|\sqrt{a}y_{m,x}\|^{2}+\sum_{i=1}^{2}\|\sqrt{a}\varphi^{i}_{m,x}\|\right) \\
		&+ (c_1 \sqrt{a} y_{m,x},y_m) + \sum_{i=1}^{2}(c_2 \sqrt{a}\varphi^i_{m,x},\varphi^i_m) +(F(y_m,c(t) \sqrt{a} y_{m,x}),y_m) + \sum_{i=1}^{2}(g \varphi^i_m,\varphi^i_m) \\
		&=(h,y_{m})- \sum_{i=1}^{2}\frac{1}{\mu_{i}}(y_{m},\varphi^{i}_{m})+\sum_{i=1}^{2}(y_{m},\varphi^{i}_{m})-\sum_{i=1}^{2}(y_{i,d},\varphi^{i}_{m}).
	\end{split}
\end{equation*}

Using that $F$ is Lipschitz in both variables, that $b(t)$ is bounded, that $c(t)$ is bounded and that $c_i$ and $g$ are bounded functions on $Q$, we have, for some constant $C_* > 0$, 
\begin{equation}
	\label{energyestimatewellpo1}
	\begin{split}
		&\|y_{m}(t)\|^{2}+\sum_{i=1}^{2}\|\varphi_{m}^{i}(t)\|^{2} + \int_{0}^{t}\|\sqrt{a}y_{m,x}\|^{2}ds + \int_{0}^{t}\sum_{i=1}^{2}\|\sqrt{a}\varphi_{m,x}^{i}\|^{2}ds\\ 
		&\leq e^{C_{*} T} \left(\|h\|^{2}_{L^{2}((0,T),L^2(O))}+\sum_{i=1}^{2}\|y_{i,d}\|^{2}_{L^{2}((0,T),L^2(O_{i}))} + \|y_{0}\|_{H^{1}_{a}(0,1)} +\sum_{i=1}^{2}\|\varphi_{0}^{i}\|^{2}_{L^{2}(0,1)}\right)
	\end{split}
\end{equation}
Since \eqref{energyestimatewellpo1} holds for any $t\in[0,T]$, 
\begin{equation}\label{energyestimate1}
	\begin{split}
		&\|y_m\|_{L^{\infty}(0,T,L^{2}(0,1))}^{2}+\sum_{i=1}^{2}\|\varphi_{m}^{i}\|_{L^{\infty}(0,T,L^{2}(0,1))}^{2}+\|\sqrt{a}y_{m,x}\|_{L^{2}(0,T,L^{2}(0,1))}^{2}+\sum_{i=1}^{2}\|\sqrt{a}\varphi_{m,x}^{i}\|_{L^{2}(0,T,L^{2}(0,1))}^{2}\\ 
		&\leq e^{C_{*} T} \left(\|h\|^{2}_{L^{2}((0,T),L^2(O))}+\sum_{i=1}^{2}\|y_{i,d}\|^{2}_{L^{2}((0,T),L^2(O_{i}))}+\|y_{0}\|_{H^{1}_{a}(0,1)}+\sum_{i=1}^{2}\|\varphi_{0}^{i}\|^{2}_{L^{2}(0,1)}\right) =: \mathcal{K}_1.
	\end{split}
\end{equation}

Estimate II: Taking $w=y_{m,t}$ and $\hat{w}=\varphi^{i}_{m,t}$ in \eqref{eq:galerkin_system}  we get
\begin{equation*}
	\begin{split}
		&\|y_{m,t}\|^2 + \sum_{i=1}^2 \|\varphi^i_{m,t}\|^2 + b(t) \frac{1}{2}\frac{d}{dt} \left( \|\sqrt{a}y_{m,x}\|^2 + \sum_{i=1}^2 \|\sqrt{a}y_{m,x}\|^2 \right) \\
		& + (c_1 \sqrt{a} y_{m,x},y_{m,t}) + \sum_{i=1}^2 (c_2 \sqrt{a}\varphi^i_{m,x},\varphi^i_{m,t}) +(F(y_m,c(t)\sqrt{a}y_{m,x}),y_{m,t}) + \sum_{i=1}^2 (g \varphi^i_m,\varphi^i_{m,t}) \\
		&\leq \|h\|^2 + \frac{1}{4}\|y_{m,t}\|^2 + C\sum_{i=1}^2 \|\varphi^i_m\|^2 + \frac{1}{4}\|y_{m,t}\|^2 + 2 C \|y_m\|^2 + \frac{1}{4}\sum_{i=1}^2  \|\varphi^i_{m,t}\|^2 \\
        &+ C \sum_{i=1}^2  \|y_{i,d}\|^2 + \frac{1}{4}\sum_{i=1}^2  \|\varphi^i_{m,t}\|^2.
	\end{split}
\end{equation*}

Thus, the lower-boundedness if $b(t)$, Cauchy-Schwarz inequality, the fact that $F$ is Lipschitz in both variables, that $c(t)$ is bounded, that $c_i$ and $g$ are bounded functions on $Q$ and Young's inequality,  imply that there exists a constants $D>0$ such that
\begin{equation*}
	\begin{split}
		&\|y_{m,t}\|^2_{L^2(0,T,L^2(0,1))} + \sum_{i=1}^2  \|\varphi^i_{m,t}\|^2_{L^2(0,T,L^2(0,1))} + \left( \|\sqrt{a}y_{m,x}(t)\|^2 + \sum_{i=1}^2  \|\sqrt{a}\varphi^i_{m,x}(t)\|^2 \right) \\
		&\leq e^{DT} \left[ \|\sqrt{a}y_{m,x}(0)\|^2 + \sum_{i=1}^2  \|\sqrt{a}\varphi^i_{m,x}(0)\|^2 +\|h\|^2_{L^2(0,T,L^2(O))} + \sum_{i=1}^2  \|y_{i,d}\|^2_{L^2(0,T,L^2(O_i))} \right. \\
        &\left.+ \|y_m\|^2_{L^\infty(0,T,L^2(0,1))}T + \sum_{i=1}^2  \|\varphi^i_m\|^2_{L^\infty(0,T,L^2(0,1))}T   \right].
	\end{split}
\end{equation*}

Finally, using estimate I,
\begin{equation*}
	\begin{split}
		&\|y_{m,t}\|^2_{L^2(0,T,L^2(0,1))} + \sum_{i=1}^2  \|\varphi^i_{m,t}\|^2_{L^2(0,T,L^2(0,1))} + \|\sqrt{a}y_{m,x}\|^2_{L^\infty(0,T,L^2(0,1))} + \sum_{i=1}^2  \|\sqrt{a}\varphi^i_{m,x}\|^2_{L^\infty(0,T,L^2(0,1))} \\
		&\leq e^{DT} \left[ \|\sqrt{a}y_{m,x}(0)\|^2_{L^2(0,1)} + \sum_{i=1}^2  \|\sqrt{a}\varphi^i_{m,x}(0)\|^2_{L^2(0,1)} +\|h\|^2_{L^2(0,T,L^2(O))}  
		\right. \\
		&\left. 
		+ \sum_{i=1}^2  \|y_{i,d}\|^2_{L^2(0,T,L^2(O_i))} + 3\mathcal{K}_1 T  \right] =:\mathcal{K}_2.
	\end{split}
\end{equation*}

Estimate III: Taking $w=-(a(x)y_{m,x})_x$ and $\hat{w}=-(a(x) \varphi^{i}_{m,x})_x$ in \eqref{eq:galerkin_system}, and proceeding as in the above estimates, we get
\begin{equation*}
	\begin{split}
		&-(y_{m,t}, (a y_{m,x})_x) - \sum_{i=1}^2  (\varphi^i_{m,t}, (a \varphi^i_{m,x})_x) + b(t) \left(\| (a y_{m,x})_x \|^2 + \sum_{i=1}^2  \| (a \varphi^i_{m,x})_x \|^2 \right) \\
		&- (c_1 \sqrt{a} y_{m,x},(a(x)y_{m,x})_x) - \sum_{i=1}^2 (c_2 \sqrt{a}\varphi^i_{m,x},(a(x) \varphi^{i}_{m,x})_x) \\
		& - (F(y_m, c(t)\sqrt{a}y_{m,x}),(a(x)y_{m,x})_x) - \sum_{i=1}^2 (g \varphi^i_m,(a(x) \varphi^{i}_{m,x})_x)  \leq C_\epsilon \|h\|^2 + \epsilon\|(a y_{m,x})_x\|^2 + C_\epsilon \sum_{i=1}^2  \|\varphi^i_m\|^2 \\
		& + \epsilon\|(a y_{m,x})_x\|^2 + C_\epsilon 2 \|y_m\|^2 + \epsilon\sum_{i=1}^2  \|(a\varphi^i_{m,x})_x\|^2 + C_\epsilon\sum_{i=1}^2  \|y_{i,d}\|^2 + \epsilon\sum_{i=1}^2  \|(a\varphi^i_{m,x})_x\|^2.
	\end{split}
\end{equation*}

Thus rearranging, using the lower-boundedness if $b(t)$, Cauchy-Schwarz inequality, the fact that $F$ is Lipschitz in both variables, that $c(t)$ is bounded, that $c_i$ and $g$ are bounded functions on $Q$ and Young's inequality, and proceeding as in estimate II we get that there exists a constants $D>0$ such that 
\begin{equation*}
	\begin{split}
		\|\sqrt{a} y_{m,x}\|^2_{L^\infty(0,T,L^2(0,1))}  + \sum_{i=1}^2  \|\sqrt{a} \varphi^i_{m,x}\|^2_{L^\infty(0,T,L^2(0,1))} +  \|(a y_{m,x})_x\|^2_{L^2(0,T,L^2(0,1))} + \sum_{i=1}^2  \|(a \varphi^i_{m,x})_x\|^2_{L^2(0,T,L^2(0,1))} \\
		\leq e^{DT} \left[ \|\sqrt{a} y_{m,x}(0) \|^2 + \sum_{i=1}^2  \|\sqrt{a} \varphi^i_{m,x}(0)\|^2 + \|h\|^2_{L^2(0,t,L^2(O))} + \sum_{i=1}^2  \|y_{i,d}\|^2_{L^2(0,t,L^2(O_i))} + 6 \mathcal{K}_1 \right] =: \mathcal{K}_3.
	\end{split}
\end{equation*}
Since $\mathcal{K}_1$, $\mathcal{K}_2$ and $\mathcal{K}_3$ do not depend on $m$, the three estimates above imply that the sequences $(y_m)$, $(\varphi^1_m)$ and $(\varphi^1_m)$ are bounded in 
$$
L^2(0,T,L^2(0,1)) \cap H^1(0,T,H^2_a(0,1)).
$$
Therefore, there exist subsequences $(y_{m_j})$, $(\varphi^i_{m_j})$, $i=1,2$, such that
$$
y_{m_j} \rightharpoonup y,\qquad \varphi^i_{m_j} \rightharpoonup \varphi^i, \qquad \text{as } j\to \infty, 
$$
weakly in $L^2(0,T,L^2(0,1)) \cap H^1(0,T,H^2_a(0,1))$.
In fact, since the immersion $H^2_a(0,1)$ in $H^1_a(0,1)$ is compact, by the theorem of Aubin-Lions we get that
$$
y_{m_j} \to y,\qquad \varphi^i_{m_j} \to \varphi^i, \qquad \text{as } j\to \infty, 
$$
strongly in $H^1_a(0,1)$. Using the continuity of $F$ and $F'$,
at least for a subsequence, we can pass to the limit in $m$ in  all the terms of the the approximate system \eqref{eq:galerkin_system}.

The uniqueness is proved by standard methods for nonlinear systems, taking into consideration the bounds established in estimates I to III for $y$, $y_x$ and $\varphi^i$ and $\varphi^i_x$, for $i=1,2$.\\

\end{itemize}
\bibliographystyle{abbrv}
\bibliography{referencias}

\begin{thebibliography}{10}

\bibitem{Alabau_cannarsa_fragnelli-06}
F.~Alabau-Boussouira, P.~Cannarsa, and G.~Fragnelli.
\newblock Carleman estimates for degenerate parabolic operators with
  applications to null controllability.
\newblock {\em Journal of Evolution Equations}, 6:161 -- 204, 2006.

\bibitem{Alekseev}
V.~Alekseev, V.~Tikhomorov, and S.~Formin.
\newblock {\em Optimal control}.
\newblock Contemporary Soviet Mathematics, 1987.

\bibitem{Araruna-EFC-daSilva-18}
F.~Araruna, E.~Fern\'andez-Cara, and L.~da~Silva.
\newblock Hierarchic control for the wave equation.
\newblock {\em Journal of Optimization Theory and Applications}, 178:264--288,
  2018.

\bibitem{Araruna-EFC-daSilva-20}
F.~Araruna, E.~Fern\'andez-Cara, and L.~da~Silva.
\newblock Hierarchical exact controllability of semilinear parabolic equations
  with distributed and boundary controls.
\newblock {\em Communications in Contemporary Mathematics}, 22, 2020.

\bibitem{Araruna-EFC-Guerr-Santos-17}
F.~Araruna, E.~Fern\'andez-Cara, S.~Guerrero, and M.~Santos.
\newblock New results on the stackelberg–nash exact control of linear
  parabolic equations.
\newblock {\em Systems and Control Letters}, 104:78--85, 2017.

\bibitem{Araruna-EFC-Santos-15}
F.~Araruna, E.~Fern\'andez-Cara, and M.~Santos.
\newblock Stackelberg–nash exact controllability for linear and semilinear
  parabolic equations.
\newblock {\em ESAIM: Control, Optimisation and Calculus of Variations},
  21:835--856, 2015.

\bibitem{ararunano}
F.~D. Araruna, B.~Ara\'ujo, and F.~Fern\'andez-Cara.
\newblock Stackelberg–nash null controllability for some linear and
  semilinear degenerate parabolic equations.
\newblock {\em Mathematics of Control, Signals, and Systems}, 30, 2018.

\bibitem{Joao_Juan_Suerlan-25}
J.~Barreira, J.~Límaco, S.~Silva, and L.~Yapu.
\newblock Hierarchical null controllability of a degenerate parabolic equation
  with non-local coefficient.
\newblock {\em Submitted}, 2025.

\bibitem{frances9}
I.~Boutaayamou, L.~Maniar, and O.~Oukdach.
\newblock Stackelberg-nash null controllability of heat equation with general
  dynamic boundary conditions.
\newblock {\em Evolution Equations and Control Theory}, 11:1285--1307, 2022.

\bibitem{DemarqueLimacoViana_deg_eq2018}
R.~Demarque, J.~Límaco, and L.~Viana.
\newblock Local null controllability for degenerate parabolic equations with
  nonlocal term.
\newblock {\em Nonlinear Analysis: Real World Applications}, 43:523 -- 547,
  2018.

\bibitem{DemarqueLimacoViana_deg_sys2020}
R.~Demarque, J.~Límaco, and L.~Viana.
\newblock Local null controllability of coupled degenerate systems with
  nonlocal terms and one control force.
\newblock {\em Evolution Equations and Control Theory}, 9:605–635, 2020.

\bibitem{frances34}
L.~Djomegne.
\newblock Hierarchic control for a nonlinear parabolic equation in an unbounded
  domain.
\newblock {\em Applicable Analysis}, pages 1--34, 2021.

\bibitem{Djomegne-Deugou-21}
L.~Djomegne and G.~Deugo\'u.
\newblock Stackelberg control in an unbounded domain for a parabolic equation.
\newblock {\em J Nonlinear Evol Equ Appl}, 5:95--118, 2021.

\bibitem{diomedes}
L.~Djomegne, C.~Kenne, R.~Dorville, and P.~Zongo.
\newblock Stackelberg–nash null controllability for a non linear coupled
  degenerate parabolic equations.
\newblock {\em Applied Mathematics and Optimization}, 2023.

\bibitem{francescuate}
L.~Djomegne, R.~Kenne, C.and~Dorville, and P.~Zongo.
\newblock Hierarchical null controllability of a semilinear degenerate
  parabolic equation with a gradient term.
\newblock {\em A Journal of Mathematical Programming and Operations Research},
  pages 1--41, 2024.

\bibitem{fragnelli-18}
G.~Fragnelli.
\newblock Carleman estimates and null controllability for a degenerate
  population model.
\newblock {\em J. Math. Pures Appl.}, 115, 2018.

\bibitem{fragnelli-20}
G.~Fragnelli.
\newblock Null controllability for a degenerate population model in divergence
  form via carleman estimates.
\newblock {\em Adv. Nonlinear Anal}, 9, 2020.

\bibitem{GYL-sistema-2025}
A.~S. Gamboa, J.~Límaco, and L.~P. Yapu.
\newblock Stackelberg-nash controllability of a system of degenerate parabolic
  equations in non-cylindrical domains.
\newblock 2025 (in preparation).

\bibitem{GYL-Carleman-2025}
A.~S. Gamboa, J.~Límaco, and L.~P. Yapu.
\newblock Controllability of a system of non-autonomous degenerate coupled
  parabolic equations.
\newblock {\em https://arxiv.org/abs/2508.17546}, 2025 (submitted).

\bibitem{HernandezSantamaria-deTeresa-18}
V.~Hernandez-Santamar\'ia and L.~de~Teresa.
\newblock Robust stackelberg controllability for linear and semilinear heat
  equations.
\newblock {\em Evolution Equations and Control Theory,}, 7:243--273, 2018.

\bibitem{frances22}
V.~Hern\'andez-Santamar\'ia and L.~de~Teresa.
\newblock Some remarks on the hierarchic control for coupled parabolic pdes.
\newblock {\em In Recent advances in PDEs: analysis, numerics and control},
  21:117--137, 2018.

\bibitem{frances23}
V.~Hern\'andez-Santamar\'ia, L.~de~Teresa, and A.~Poznyak.
\newblock Hierarchic control for a coupled parabolic system.
\newblock {\em Portugaliae Mathematica}, 73:115--137, 2016.

\bibitem{HernandezSantamaria-Peralta-20}
V.~Hernandez-Santamar\'ia and L.~Peralta.
\newblock Some remarks on the robust stackelberg controllability for the heat
  equation with controls on the boundary.
\newblock {\em Discrete and Continuous Dynamical Systems}, 25:169--190, 2020.

\bibitem{frances24}
D.~Huaman.
\newblock Stackelberg-nash controllability for a quasi-linear parabolic
  equation in dimension 1d, 2d, or 3d.
\newblock {\em Journal of Dynamical and Control Systems}, 28:292--317, 2022.

\bibitem{frances25}
M.~K\'er\'e, M.~Mercan, and G.~Mophou.
\newblock Control of stackelberg for coupled parabolic equations.
\newblock {\em Journal of Dynamical and Control Systems}, 23:709--733, 2017.

\bibitem{cuarto}
F.~Li and B.~You.
\newblock Hierarchical exact controllability of the fourth-order parabolic
  equations.
\newblock {\em Communications in Contemporary Mathematics}, 2025.

\bibitem{Lions-94-PMS}
J.~Lions.
\newblock Hierarchic control.
\newblock {\em Proceedings Mathematical Sciences}, 104:295--304, 1994.

\bibitem{Lions-94-MM}
J.~Lions.
\newblock Some remarks on stackelberg’s optimization.
\newblock {\em Mathematical Models and Methods in Applied Sciences},
  4:477--487, 1994.

\bibitem{frances33}
D.~Nina-Huaman and J.~L\'imaco.
\newblock Stackelberg–nash controllability for n-dimensional nonlinear
  parabolic partial differential equations.
\newblock {\em Applied Mathematics and Optimization}, 84:1401--1452, 2021.

\bibitem{miguel}
M.~Nuñez-Chávez and J.~Límaco.
\newblock Hierarchical controllability for a nonlinear parabolic equation in
  one dimension.
\newblock {\em Journal of Optimization Theory and Applications}, 2023.

\end{thebibliography}


\begin{thebibliography}{99}

\setlength{\itemsep}{0cm}

\begin{small}

\bibitem{Fur_Ima-96} Fursikov, A. V., Imanuvilov, O.Y.: Controllability of evolution equations, Lecture Note Series 34, Research Institute of Mathematics. Seoul National University, Seoul (1996).



\bibitem{Alabau_cannarsa_fragnelli-06} F. Alabau-Boussouira, P. Cannarsa, and G. Fragnelli. Carleman estimates for degenerate parabolic operators with applications to null controllability. Journal of Evolution Equations, 6(2):161–204, 2006.

\bibitem{Alekseev} V. M. Alekseev, V. M. Tikhomorov, and S. V. Formin. Optimal control. Contemporary Soviet Mathematics, Consultants Bureau, New York, 1987.

\bibitem{Joao_Juan_Suerlan-25} J.C. Barreira, J.B. Límaco, S. Silva, L.P. Yapu, Hierarchical null controllability of a degenerate parabolic
equation with non-local coefficient, 2025. (Submitted)

\bibitem{DemarqueLimacoViana_deg_eq2018} R. Demarque, J. Límaco, and L. Viana. Local null controllability for degenerate parabolic equations with nonlocal term. Nonlinear Analysis: Real World Applications, 43:523–547,
2018.

\bibitem{DemarqueLimacoViana_deg_sys2020} R. Demarque, J. Límaco, and L. Viana. Local null controllability of coupled degenerate systems with nonlocal terms and one control force. Evolution Equations and Control Theory, 9(3):605–635, 2020.

\bibitem{djomegne2022hierarchical} L. Djomegne, C. Kenne, R. Dorville, and P. Zongo. Hierarchical null controllability of a semi-linear degenerate parabolic equation with a gradient term. arXiv preprint arXiv:2209.12450, 2022.

\bibitem{FadiliManiar} M. Fadili and L. Maniar. Null controllability of n-coupled degenerate parabolic systems with
m-controls. J. Evol. Equ., 17:1311–1340, 2017.


\end{small}

\end{thebibliography}

\end{document}